\newtheorem{theorem}{Theorem}
\newtheorem{definition}[theorem]{Definition}
\newtheorem{corollary}[theorem]{Corollary}
\newtheorem{lemma}[theorem]{Lemma}
\newtheorem{assumption}[theorem]{Assumption}
\newtheorem{remark}[theorem]{Remark}
\newtheorem{example}[theorem]{Example}
\numberwithin{theorem}{section}
\newcolumntype{L}[1]{>{\raggedright\let\newline\\\arraybackslash\hspace{0pt}}m{#1}}
\newcolumntype{C}[1]{>{\centering\let\newline\\\arraybackslash\hspace{0pt}}m{#1}}
\newcolumntype{R}[1]{>{\raggedleft\let\newline\\\arraybackslash\hspace{0pt}}m{#1}}
\definecolor{blue}{rgb}{0.0, 0.0,1.0}
\definecolor{gray}{rgb}{0.7, 0.7, 0.7}
\definecolor{blue-violet}{rgb}{0.54, 0.17, 0.89}
\newcommand{\mtx}[1]{\boldsymbol{#1}}
\newcommand{\mvec}[1]{\boldsymbol{#1}}
\newcommand\bT{\mathcal T}
\newcommand\bR{\mathbb R}
\newcommand\set{\mathcal S}
\newcommand\func{\mathcal F}
\newcommand\calT{\mathcal{T}}
\newcommand\bp{\boldsymbol p}
\newcommand\bu{\boldsymbol u}
\newcommand\bv{\boldsymbol v}
\newcommand\bx{\boldsymbol x}
\newcommand\by{\boldsymbol y}
\newcommand\bz{\boldsymbol z}
\newcommand\ba{\boldsymbol a}
\newcommand\bC{\boldsymbol C}
\newcommand\bW{\boldsymbol W}
\newcommand\bA{\boldsymbol A}
\newcommand\bQ{\boldsymbol Q}
\newcommand\bU{\boldsymbol U}
\newcommand\bV{\boldsymbol V}
\def \bzero {\mathbf{0}}
\DeclareMathOperator{\trace}{trace}
\DeclareMathOperator{\rank}{rank}
\DeclareMathOperator{\range}{range}
\DeclareMathOperator{\prob}{\mathbb{P}}
\DeclarePairedDelimiter{\ceil}{\lceil}{\rceil}
\def \R {\mathbb{R}}
\def \red {\mathrm{red}}
\def \spanvec {\mathrm{span}}
\newcolumntype{R}[2]{%
    >{\adjustbox{angle=#1,lap=\width-(#2)}\bgroup}%
    l%
    <{\egroup}%
}
\newsavebox\CBox
\newcommand{\subalign}[2][c]{%
	\if#1c\vcenter\else\vtop\fi{%
		\Let@ \restore@math@cr \default@tag
		\baselineskip\fontdimen10 \scriptfont\tw@
		\advance\baselineskip\fontdimen12 \scriptfont\tw@
		\lineskip\thr@@\fontdimen8 \scriptfont\thr@@
		\lineskiplimit\lineskip
		\ialign{\hfil$\m@th\scriptstyle##$&$\m@th\scriptstyle{}##$\hfil\crcr
			#2\crcr
		}%
	}%
}
\renewcommand*{\@fnsymbol}[1]{\ensuremath{\ifcase#1\or *\or \ddagger\or \mathsection\or \vee\or \wedge\or \dagger\or
		\mathsection\or \mathparagraph\or \|\or **\or \dagger\dagger
		\or \ddagger\ddagger \else\@ctrerr\fi}}
\numberwithin{equation}{section}
\begin{document}

%\title{\emnew{Active subspaces for global optimization of functions with low effective dimension}}
\title{Learning the subspace of variation for global optimization of functions with low effective dimension}
\author{
	Coralia Cartis\thanks{Mathematical Institute, University of Oxford, Radcliffe Observatory Quarter, Woodstock Road, Oxford, OX2 6GG, UK; The Alan Turing Institute, The British Library, London, NW1 2DB, UK, \texttt{cartis@maths.ox.ac.uk}. This work was supported by The Alan Turing Institute under The Engineering and Physical Sciences Research Council (EPSRC) grant EP/N510129/1 and under the Turing project scheme.} 
	\and Xinzhu Liang\thanks{The University of Manchester, Oxford Road, Manchester, M13 9PL, UK, \texttt{xinzhu.liang@postgrad.manchester.ac.uk}. This research was support by IBM and EPSRC in the form of an Industrial Case Doctoral Studentship Award.}
	\and Estelle Massart\thanks{ICTEAM Institute, UCLouvain, Euler Building, Avenue Georges Lemaître, 4 - bte L4.05.01, Louvain-la-Neuve, B - 1348, Belgium, \texttt{estelle.massart@uclouvain.be}. This research was supported by the National Physical Laboratory (Teddington, UK) and by the F.R.S.-FNRS (Belgium). } 
	\and
	Adilet Otemissov\thanks{Department of Mathematics, School of Sciences and Humanities, Nazarbayev University, Kabanbay Batyr 53, Astana 010000, Kazakhstan; \texttt{aotemissov@nu.edu.kz}. This research has been funded by Nazarbayev University under Faculty-development competitive research grants program for 2023--2025 Grant \textnumero 20122022FD4138.}
}

\date{\today}
\maketitle
%\footnotesep=0.4cm

{\small \begin{abstract}
 We propose an algorithmic framework, that employs active subspace techniques, for scalable global optimization of functions with low effective dimension (also referred to as low-rank functions). This proposal replaces the original high-dimensional problem by one or several lower-dimensional reduced subproblem(s), capturing the main directions of variation of the objective which are estimated here as the principal components of a collection of sampled gradients. We quantify the sampling complexity of estimating the subspace of variation of the objective in terms of its  effective dimension  and hence, bound the probability that the reduced problem will provide a solution to the original problem. To account for the practical case when the effective dimension is not known a priori, our framework  adaptively solves a succession of reduced problems,  increasing the number of sampled gradients until the estimated subspace of variation remains unchanged. We prove global convergence under mild assumptions on the objective, the sampling distribution and the subproblem solver, and illustrate numerically the benefits of our proposed algorithms over those using random embeddings. 
\end{abstract}

	\bigskip
	
	\begin{center}
		\textbf{Keywords:}
		global optimization, dimensionality reduction techniques, active subspaces, functions with low effective dimension,  low-rank functions. %\emcomm{maybe we should remove functions with active subspaces? In the paper we don't see active subspaces as an assumption, but as a computational tool}
	\end{center}
}

\section{Introduction} \label{sec:intro}

We consider the unconstrained global optimization problem
\begin{align}\tag{P}
\label{P}
	f^{*}=\min_{\bx \in \bR^{D}} f(\bx)
\end{align}
where $f: \bR^{D} \rightarrow \bR$ is a real-valued continuously differentiable, possibly non-convex, deterministic function. We further assume $f$ to have \emph{low effective dimension}, namely, $f$ varies only within a (low-dimensional and unknown) linear subspace and is constant along its complement. These functions, also referred to as \emph{multi-ridge} \cite{Tyagi2014}, or \emph{low-rank} functions \cite{Cosson2023,Parkinson2023}, arise when tuning (over)parametrized models and processes, such as in hyper-parameter optimization for neural networks \cite{Bergstra2012}, heuristic algorithms for combinatorial optimization problems \cite{Hutter2014}, complex engineering and physical simulation problems \cite{constantine2015book} including climate modelling \cite{Knight2007}, and policy search and control \cite{Zhang19, Frohlich2020}. 
In \cite{cartis2023bound} and references therein, it was shown that the global optimization of these special-structure functions is tractable and scalable by means of random embeddings, namely, by (possibly repeatedly) solving a projected problem in (small-dimensional) random subspaces. Here,
we explore the use of problem-based embeddings for the global optimization of low-rank objectives; in particular, we aim to learn the effective subspace of the function before or during the optimization process.  

%Under the low effective dimensionality assumption, including random embeddings inside global optimization algorithms was shown to significantly improve the scalability of these methods \cite{cartis2023bound}.  In this work, we explore the use of deterministic problem-based embeddings for global optimization of functions with low effective dimension. 

Our proposed framework builds on \emph{active subspaces}, a key notion for dimensionality reduction of parameter studies in physical/engineering models \cite{constantine2015book}. The active subspace of a function $f : \R^D \to \R$ is defined as the leading eigenspace of the matrix 
\begin{equation} \label{C}
    \bC = \int \nabla f(\bx) \nabla f(\bx)^T \rho(\bx) d\bx \in \bR^{D \times D},
\end{equation} 
where $\nabla f$ denotes the gradient of $f$ and $\rho$ is a probability density on $\R^D$.
Active subspaces aim to capture the main directions of  variation of the function $f$. In practice, the integral is approximated using Monte-Carlo sampling: 
\begin{equation} \label{eq:hatC} 
\hat{\bC} = \frac{1}{M}\sum_{j = 1}^M \nabla f(\bx^j_S) \nabla f(\bx^j_S)^T,
\end{equation}
where $\bx_S^1, \dots, \bx_S^M \in \R^{D}$ are samples drawn independently at random, according to the density $\rho$. The active subspace is then approximated by the leading eigenspace of $\hat{\bC}$, and its dimension is usually selected based on the decrease of the eigenvalues of $\hat \bC$ in order to achieve a good trade-off between complexity of the reduced model and discarded information. %The active subspace literature addresses sampling complexity, quantifying the required number of samples $M$ as a function of the dimension of the sought active subspace, spectrum of the matrix $\bC$, and upper bound on the norm of the gradients; see \cite{constantine2015book} for a detailed exposition. 

In this work, we exploit active subspace techniques for the global optimization of functions with low effective dimension. In the case where $f$ has effective dimension $d_e$ and $\rho$ is supported in the whole $\R^D$, we show that the matrix $\bC$ has rank $d_e$, and its range (or, equivalently, the $d_e$-dimensional active subspace of $f$) is equal to the effective subspace of variation of the objective. The original problem \eqref{P} can thus be solved by embedding the objective into the range of $\bC$, which can be estimated by the range of $\hat \bC$ provided $M$ is  sufficiently large. The important question is then: 
\begin{center}
    \emph{How large does  $M$ need to be in order for the leading eigenspace of $\hat \bC$ to capture all directions of variation of the objective?}
\end{center} 
We provide a novel sample complexity bound to answer this question that exploits the low effective dimensionality of $f$, and thus improves upon existing  sampling complexity results.

Since sampling bounds depend on the effective dimension $d_e$, using them directly within an algorithm may be impractical as $d_e$ is often unknown a priori, in real-life applications. 
In such cases, we propose the following algorithmic framework. We  replace the original high-dimensional problem \eqref{P} by a sequence/collection of reduced/embedded problems
\begin{align}\tag{RP}
\label{RP}
	\min_{\by \in \bR^{d}} f(\bA\by + \bp),	
\end{align}
where $\bA$ is a $D \times d$ real matrix whose range approximates the effective subspace of variation of $f$, $\bp \in \R^D$ is typically chosen as an estimate of a solution found so far in the run of the algorithm, and where we allow any global optimization solver to be used to solve the reduced problem \eqref{RP}. Successive embeddings (i.e., matrices $\bA$) are computed as bases of the range of $\hat \bC$, increasing progressively the number of samples $M$ in \eqref{eq:hatC}. For $M$ large enough, we show that the reduced problem \eqref{RP} is successful in the following sense.

\begin{definition}\label{def:RPsuccessful}
	We say that~(\ref{RP}) is successful if there exists $\by \in \bR^{d}$ such that $f(\bA\by + \bp) = f^{*}$. 
\end{definition}
In other words, \eqref{RP} is successful if it is equivalent to the original problem \eqref{P} in terms of optimal objective value; in that case, any optimal solution $\by^*$ of \eqref{RP} provides us with an optimal solution $\bx^* := \bA \by^* + \bp$ of the initial high-dimensional problem \eqref{P}. We prove the global convergence of our proposed algorithmic framework and quantify its complexity based on sampling complexity results for active subspaces.

\subsection{Related work.} 
Active subspaces capture the leading directions of variation of an arbitrary continuously differentiable function $f : \R^D \to \R$ by relying on the second moment matrix of the gradients \eqref{C}, see \cite{Samarov1993, Russi2010}. They are used in areas as diverse as dimensionality reduction in engineering/physical models \cite{constantine2015book,Lam2020}, Markov Chain Monte Carlo acceleration in Bayesian inference \cite{constantine2016b}, and neural networks' compression and vulnerability analysis \cite{Cui2020}; we refer to  \cite{constantine2015book} for a detailed exposition of active subspace theory including sampling complexity. Since active subspaces are estimated as the principal components of a set of sampled gradients, their sampling complexity is strongly connected to that of (streaming) PCA, see \cite{Huang2021} and the references therein.  Alternative methods for estimating the directions of variation of an arbitrary function include, for example, global sensitivity analysis \cite{Saltelli2008}, sliced inverse regression \cite{Garnett2014}, and basis adaptation \cite{Tipireddy2014}. Active subspaces (or related linear dimensionality reduction techniques) were used to improve the scalability of  genetic algorithms \cite{Demo2020}, shape optimization \cite{lukaczyk2014active},  and derivative-free methods \cite{Vandenbulcke2020}.

%\emcomm{I suggest not to cite \cite{palar2017exploiting}, that proposes to use active subspaces for optimization of Rosenbrock etc, under no low effective dimension assumption. No real numerics nor theory but same name of algorithm as ours...OK}

In parallel to the active subspace literature, several works specifically address the approximation of special-structure functions, including low-rank functions (equivalently, functions with low-effective dimension) \cite{Fornasier2012, Tyagi2014}. The authors of \cite{Fornasier2012} approximate low-rank functions over the unit ball when the gradients of the function are unknown and approximated using finite differences. They formulate the computation of the effective subspace of variation as a collection of compressed sensing problems involving finite differences in random directions and derive sampling complexity results. A variant was proposed in \cite{Tyagi2014}, that relies instead on a low-rank factorization formulation. Sampling complexity results for active subspace estimation are derived both in the latter works and in the dedicated literature using matrix concentration inequalities. Estimating the subspace of variation was shown to improve Bayesian optimization scalability for low-rank functions (see \cite{Binois2022} for a survey): the authors of \cite{Djolonga2013} use the compressed sensing approach in \cite{Tyagi2014} to learn the subspace of variation first and then optimize the function in this subspace, while \cite{Zhang19, Chen2020} rely on sliced inverse regression and update the estimated subspace during optimization as new information on the objective becomes available. The recent work \cite{Cosson2023} addresses first-order methods for local optimization of low-rank functions, by taking projected gradient steps within estimated active subspaces and deriving 
 improved global rates for local optimization.

Instead of approximating the effective subspace of variation, it was also proposed to use random embeddings for Bayesian \cite{Djolonga2013,Binois2014,Wang2016}, derivative-free   \cite{QianHuYu2016}, multi-objective \cite{Qian2017}, evolutionary \cite{Sanyang2016}, and global optimization of low-rank functions \cite{Binois2017,cartisOtemissov2022,cartis2023bound,cartis2023generalf}, as well as their local optimization by means of first-order \cite{becker, villa1, villa2} and second-order methods \cite{ZhenShao}. In \cite{cartisOtemissov2022}, we propose the REGO ({Random Embeddings for Global Optimization}) algorithmic framework, that replaces \eqref{P} by (possibly several realizations of) the reduced problem \eqref{RP} in which $\bA$ is a $D \times d$ Gaussian matrix. In the unconstrained case, we show that \eqref{RP} is successful with probability one if the random embedding dimension $d$ is at least the effective dimension of the objective $d_e$. In the bound-constrained case,  \cite{cartis2023bound} shows that multiple $d$-dimensional random embeddings (with $d \geq d_e$) are needed in order to find a global minimizer of \eqref{P}; however, when the effective subspace is aligned with coordinate axes, the number of required embeddings scales algebraically with the ambient dimension $D$. A convergence analysis of REGO for global optimization of Lipschitz-continuous objectives under no special-structure assumption, as well as for objectives with effective dimension $d_e > d$, is provided in \cite{cartis2023generalf}.  

%While their estimation of the subspace is similar to the one proposed in our work (in the adaptive and non-adaptive settings), their theoretical analysis is very different. \emcomm{I suspect they make much stronger assumptions on the gradient, to check...}

% \paragraph{Sample complexity of PCA.}
% One of the core results of our work relates to the sample complexity of active subspace estimation. The proposed approach, initially suggested in \cite{constantine2015active}, is closely connected to principal component analysis (PCA). The sample complexity of PCA has been widely studied in the literature. 

\subsection{Contributions.}
We propose a generic global optimization framework for low-rank functions called \emph{Active Subspace Method for Global Optimization (ASM-GO)}, in which the original high-dimensional problem \eqref{P} is replaced by a collection of lower-dimensional subproblems \eqref{RP}, that can be solved by any global optimization solver. In contrast to REGO, ASM-GO relies on \emph{problem-based  embeddings}: the matrix $\bA$ in \eqref{RP} is a basis of the subspace spanned by a collection of sampled gradients $\{ \nabla f(\bx_S^1), \dots, \nabla f(\bx_S^{M})\}$, where $\bx_S^1, \dots, \bx_S^{M} \in \R^D$ are drawn independently at random according to some distribution $\rho$  supported\footnote{This is equivalent to selecting $\bA$ as a basis of the approximate active subspace of $f$ whose dimension is obtained by discarding zero eigenvalues.} in the whole $\R^D$. We emphasize that our algorithmic framework does not require any knowledge on the effective dimension/subspace of the objective. As the number of samples $M$ needed to generate a sufficiently accurate active subspace estimate is unknown, we progressively increase $M$ until reaching stagnation in the generation of $\bA$, which implies convergence of our algorithm. 
%The success of our approach relies on the following findings: 
We show the following results.
\begin{itemize}
    \item The reduced problem \eqref{RP} is successful in the sense of \Cref{def:RPsuccessful} if $\bA$ is a basis of the effective subspace of variation of $f$; see \Cref{rp_as_suc}.
    \item The effective subspace of variation of $f$ is equal to the range of $\bC$, defined in \eqref{C}, which has dimension $d_e$ (also referred to as the active subspace of $f)$; see \Cref{thm:effective_equal_active}. 
    \item The range of $\hat \bC$, defined in \eqref{eq:hatC}, belongs to the effective subspace of variation of $f$. Its dimension is thus lower than or equal to the effective dimension $d_e$; see \Cref{lem:d<=d_e}. The reduced problem is thus successful if $\rank(\hat \bC) =d_e$; see \Cref{lem:d=de_implies_success}.
    \item Sampling complexity results allow us to lower bound the number of samples needed for the range of $ \hat \bC$ to have dimension $d_e$, hence, for the reduced problem to be successful; see \Cref{thm:estimate_on_M}. This bound on the probability of success involves the number of samples $M$, the spectrum of the second moment gradient matrix $\bC$, a uniform upper bound on the norm of the gradient, and the effective dimension $d_e$. In particular, note that there is no dependency in the sampling complexity on the initial dimension $D$. 
    \item Provided each reduced subproblem is solved to desired accuracy (with some probability), we show almost sure convergence of ASM-GO.
    \item We provide numerical comparisons of problem-based and random embedding frameworks applied to synthetic problems with induced low effective dimensionality, which show that practical ASM-GO variants perform competitively with REGO methods (the code is available on GitHub\footnote{See \url{https://github.com/aotemissov/Global_Optimization_with_Random_Embeddings}}).  
\end{itemize}

\subsection{Paper outline and notation}
\Cref{sec: prelim} introduces the low effective dimensionality assumption and relates effective and active subspaces. \Cref{sec: estim_success} quantifies the probability of success of \eqref{RP} in terms of the sampling number $M$, which is key for our global convergence analysis of \Cref{sec: algo}, where ASM-GO is also presented. Finally, \Cref{sec:numerics} contains numerical illustrations. In terms of notation, we use bold capital letters for matrices ($\mtx{A}$) and bold lowercase letters ($\mvec{a}$) for vectors, $\mtx{I}_D$ is the $D \times D$ identity matrix, and $\mvec{0}_D$, $\mvec{1}_D$ (or simply $\mvec{0}$, $\mvec{1}$) are the $D$-dimensional vectors of zeros and ones, respectively. We write $a_i$ to denote the $i$th entry of $\mvec{a}$ and write $\mvec{a}_{i:j}$, $i<j$, for the vector $(a_i \; a_{i+1} \cdots a_{j})^T$. We write $\langle \cdot , \cdot \rangle$, $\| \cdot \|$ (or equivalently $\| \cdot \|_2$) for the usual Euclidean inner product and Euclidean norm, respectively, and $\lceil \cdot \rceil$ for the ceiling operator.
%\emcomm{check whether there are other things to add...NOTE THAT I HAVE DEFINED THE GRADIENT just after (1.2) - but should repeat it here?}

\section{Functions with low effective dimensionality and active subspaces} \label{sec: prelim}
In this section we connect the two notions of the effective dimension of a function and its active subspace of variation.  
Let us first define functions with low effective dimension.

\begin{definition}[Definition 1 in Wang et al \cite{Wang2016}]
\label{def_f_low}
	A function $f: \bR^{D} \rightarrow \bR$ is said to have effective dimensionality $d_{e}$, with $d_{e} \leq D$, if
	\begin{itemize}
		\item there exists a linear subspace $\bT$ of dimension $d_e$ such that for all $\bx_{\top} \in \bT \subset \bR^{D}$ and $\bx_{\bot} \in \bT^{\bot} \subset \bR^{D}$, we have $f(\bx_{\top}+\bx_{\bot}) = f(\bx_{\top})$, where $\bT^{\bot}$ denotes the orthogonal complement of $\bT$;
	\item $d_{e}$ is the smallest integer with this property.
	\end{itemize}
	We call $\bT$ the effective subspace of $f$ and $\bT^{\bot}$ the constant subspace of $f$.
\end{definition}

\begin{example}[Extracted from \cite{cartisOtemissov2022}]
    The function $f(x_1,x_2) = \sin^2(x_1-x_2-0.5)$, depicted on \Cref{fig:low_dim}, has effective dimension $d_e = 1$, effective subspace $\spanvec([1,1])$ and constant subspace $\spanvec([1,-1])$.   
\begin{figure}[h!]
    \centering
  \includegraphics[scale=0.2, trim=0 50 0 70, clip]{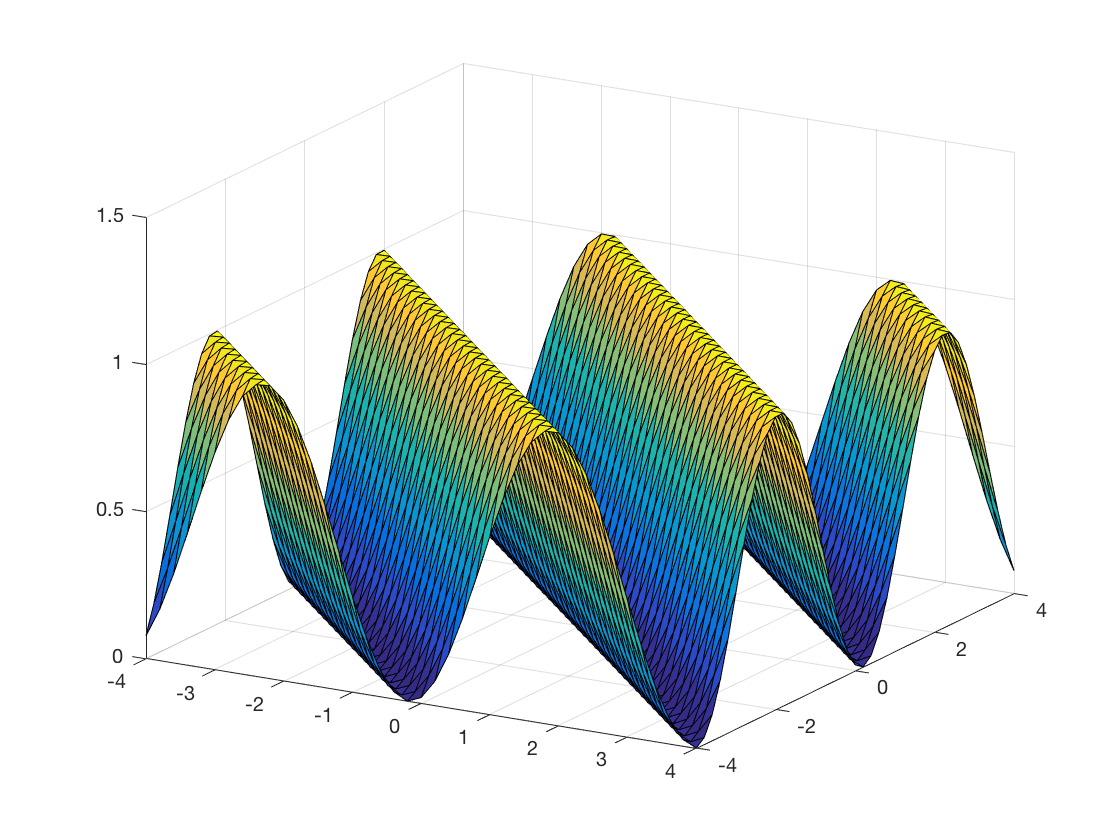}
    \caption{Example of function with low effective dimension  \cite{cartisOtemissov2022}.}
    \label{fig:low_dim}
\end{figure}
\end{example}

% Note that every vector $\bx \in \bR^{D}$ can be decomposed as $\bx = \bx_{\top}+\bx_{\bot}$, where $\bx_{\top}$ and $\bx_{\bot}$ are orthogonal projections of $\bx$ into $\bT$ and $\bT^{\bot}$, respectively. In particular, if $\bx^{*}$ is a global minimizer of $f$, then
% \begin{align} \label{eq:sep_p_top_p_perp}
% 	f(\bx^{*}) = f(\bx^{*}_{\top}+\bx^{*}_{\bot}) = f(\bx^{*}_{\top}) = f(\bx^{*}_{\top}+\bx_{\bot})	
% \end{align}
% for any $\bx_{\bot} \in \bT^{\bot}$. 
Let us now formalise our generic assumption on the structure of our objective function, which we require throughout the paper. We note that though we assume an effective subspace exists for $f$, 
we do not assume to know its orientation or a basis, a priori.
\begin{assumption} (Assumption 7.2 in  \cite{cartis2023generalf})
\label{ass:f_low_base}
	The function $f:\bR^{D} \rightarrow \bR$ has effective dimensionality $d_{e}$ with effective subspace $\bT$ and constant subspace $\bT^{\bot}$ spanned by the columns of  orthonormal matrices $\bU \in \bR^{D \times d_{e}}$ and $\bV \in \bR^{D \times (D-d_{e})}$, respectively. We write $\bx_{\top}=\bU\bU^{T}\bx$ and $\bx_{\bot}=\bV\bV^{T}\bx$, the unique Euclidean projections of any vector $\bx \in \bR^{D}$ onto $\bT$ and $\bT^{\bot}$, respectively.
\end{assumption} 
Provided \Cref{ass:f_low_base} holds, we have that for any $\bx \in \R^D$:
\begin{equation} \label{eq:eff_subspace}
     f(\bx) = f(\bU \bU^T \bx).
\end{equation}
Moreover, a matrix $\bU$ as in \Cref{ass:f_low_base} has the smallest possible dimensions, i.e., there is no matrix $\tilde \bU \in \R^{D \times n}$, $n < d_e$, such that $f(\bx) = f(\tilde \bU \tilde \bU^T \bx)$ for all $\bx \in \R^D$. This leads to the following definition.

\begin{definition} \label{def:g}
    Under \Cref{ass:f_low_base}, we define the low-dimensional representation $g : \R^{d_e} \to \R$ of the objective $f : \R^D \to \R$ as
    \[  f(\bx) =  f(\bU \bz) =: g(\bz),\]
with $\bz := \bU^T \bx$. 
\end{definition}

 We also rely on the following smoothness assumption on the objective. 
 
\begin{assumption} \label{ass:C1}
The function $f$ in \eqref{P} is continuously differentiable on $\R^D$.
\end{assumption}
The next results characterize the location of the gradients with respect to the effective and constant subspaces.

\begin{lemma} \label{lem:grad_in_T}
   Let \Cref{ass:f_low_base} and \Cref{ass:C1} hold. Then, $\nabla f(\bx) \in \calT$ for all $\bx \in \R^D$. 
\end{lemma}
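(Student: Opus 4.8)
The plan is to show that $\nabla f(\bx)$ is orthogonal to the constant subspace $\bT^{\bot}$, which by $\bT = (\bT^{\bot})^{\bot}$ is equivalent to the claimed inclusion $\nabla f(\bx) \in \calT$. The key structural fact I would exploit is Equation~\eqref{eq:eff_subspace}, namely $f(\bx) = f(\bU\bU^{T}\bx)$, which encodes that $f$ is constant along any direction lying in $\bT^{\bot}$.

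First I would fix an arbitrary $\bx \in \R^{D}$ and an arbitrary direction $\bw \in \bT^{\bot}$, and consider the univariate function $h(t) := f(\bx + t\bw)$. Since $\bw \in \bT^{\bot}$ we have $\bU^{T}\bw = \bzero$, hence $\bU\bU^{T}(\bx + t\bw) = \bU\bU^{T}\bx$ for every $t \in \R$. Applying \eqref{eq:eff_subspace} then gives $h(t) = f(\bU\bU^{T}(\bx+t\bw)) = f(\bU\bU^{T}\bx) = f(\bx) = h(0)$, so $h$ is constant. Because $f$ is continuously differentiable (\Cref{ass:C1}), $h$ is differentiable with $h'(t) = \langle \nabla f(\bx + t\bw), \bw\rangle$ by the chain rule; evaluating at $t=0$ and using $h' \equiv 0$ yields $\langle \nabla f(\bx), \bw\rangle = 0$.

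Since $\bw \in \bT^{\bot}$ was arbitrary, this shows $\nabla f(\bx) \perp \bT^{\bot}$, i.e.\ $\nabla f(\bx) \in (\bT^{\bot})^{\bot} = \bT = \calT$. As $\bx$ was arbitrary, the claim follows for all $\bx \in \R^{D}$.

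An equivalent and even shorter route is to differentiate \eqref{eq:eff_subspace} directly: writing $\bP := \bU\bU^{T}$ for the orthogonal projector onto $\bT$, the identity $f(\bx) = f(\bP\bx)$ combined with the chain rule gives $\nabla f(\bx) = \bP^{T}\nabla f(\bP\bx) = \bU\bU^{T}\nabla f(\bP\bx)$, whose right-hand side lies in $\range(\bU) = \calT$ by construction. There is no genuine obstacle here; the only points requiring care are the legitimacy of the chain rule (which holds under \Cref{ass:C1}) and the bookkeeping observation that $\calT$, $\bT$, and $\range(\bU)$ all denote the same effective subspace.
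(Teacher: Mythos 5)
Your proposal is correct. Your second, ``shorter route'' is essentially the paper's own proof: the paper writes $f(\bx) = g(\bU^T\bx)$ with $g$ the low-dimensional representation from \Cref{def:g} and applies the chain rule to obtain $\nabla f(\bx) = \bU\,\nabla g(\bU^T\bx) \in \range(\bU) = \calT$, which is the same computation as your $\nabla f(\bx) = \bU\bU^T\nabla f(\bU\bU^T\bx)$. Your primary argument --- showing $\nabla f(\bx)\perp\bT^{\bot}$ by checking that $t\mapsto f(\bx+t\bw)$ is constant for every $\bw\in\bT^{\bot}$ and differentiating at $t=0$ --- is a slightly longer but equally valid route; its only advantage is that it works directly with directional derivatives of $f$ (guaranteed by \Cref{ass:C1}) rather than introducing the reduced function $g$, at the cost of the extra step of passing from $\nabla f(\bx)\perp\bT^{\bot}$ to $\nabla f(\bx)\in\bT$. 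Both arguments are sound and there is no gap.
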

\begin{proof}
       Let $g$ be defined in \Cref{def:g}. For all $\bx \in \R^D$, we have $\nabla f(\bx) = \bU \nabla g(\bz)$, so that $\nabla f(\bx) \in \range(\bU) = \calT$.
\end{proof}

\begin{lemma}\label{lem:grad_perp_Tperp}
Let \Cref{ass:f_low_base} and \Cref{ass:C1} hold. A vector $\bv \in \R^D$ belongs to the constant subspace $\calT^\perp$ if and only if  $\nabla f(\bx)^T \bv = 0$, for all $\bx \in \R^D$.   
\end{lemma}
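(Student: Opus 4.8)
The statement is an equivalence, and the plan is to reduce both directions to a single claim: that the gradients span the \emph{entire} effective subspace, i.e.
\[
\bW := \spanvec\{\nabla f(\bx) : \bx \in \R^D\} = \calT .
\]
Indeed, the set of vectors orthogonal to every gradient is by definition exactly $\bW^\perp$, so the lemma asserts precisely that $\bW^\perp = \calT^\perp$, which for subspaces is equivalent to $\bW = \calT$. One inclusion, $\bW \subseteq \calT$, is already supplied by \Cref{lem:grad_in_T}, and this immediately yields the ``only if'' direction: if $\bv \in \calT^\perp$ then $\bv \in \calT^\perp \subseteq \bW^\perp$, so $\nabla f(\bx)^T \bv = 0$ for all $\bx$. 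The whole difficulty is therefore concentrated in the reverse inclusion $\calT \subseteq \bW$, equivalently the ``if'' direction.

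To establish $\calT \subseteq \bW$, I would argue by contradiction, invoking the \emph{minimality} of the effective dimension $d_e$ recorded just after \Cref{ass:f_low_base}. Suppose $\bW \subsetneq \calT$, so that $r := \dim \bW < d_e$, and let $\bB \in \R^{D \times r}$ have orthonormal columns with $\range(\bB) = \bW$. Since every gradient lies in $\bW$, we have $\nabla f(\bx)^T \bu = 0$ for all $\bx \in \R^D$ and all $\bu \in \bW^\perp$.

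The key intermediate step is to upgrade this pointwise vanishing of directional derivatives into genuine constancy of $f$ along $\bW^\perp$. Because $f$ is continuously differentiable (\Cref{ass:C1}) and $\bW^\perp$ is a linear subspace, for any $\bx$ and any $\bu \in \bW^\perp$ the map $t \mapsto f(\bx + t\bu)$ has derivative $\nabla f(\bx + t\bu)^T \bu = 0$, so by the fundamental theorem of calculus $f(\bx + \bu) = f(\bx)$. Decomposing $\bx = \bB\bB^T \bx + (\bx - \bB\bB^T\bx)$ with $\bx - \bB\bB^T\bx \in \bW^\perp$, this gives $f(\bx) = f(\bB\bB^T \bx)$ for all $\bx \in \R^D$, where $\bB$ has $r < d_e$ columns. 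This contradicts the minimality clause that no matrix $\tilde\bU \in \R^{D \times n}$ with $n < d_e$ satisfies $f(\bx) = f(\tilde\bU \tilde\bU^T \bx)$ for all $\bx$. Hence $\bW = \calT$, which completes the ``if'' direction.

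I expect the main obstacle to be exactly this middle step: passing from pointwise zero directional derivatives to global constancy along the whole subspace $\bW^\perp$, and then repackaging it cleanly as a reduced representation $f(\bx) = f(\bB\bB^T\bx)$ to which the minimality clause can be applied verbatim. Everything else — the forward inclusion and the projection bookkeeping — is routine.
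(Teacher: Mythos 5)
Your proof is correct and takes essentially the same route as the paper's: the forward direction is read off from \Cref{lem:grad_in_T}, and the nontrivial direction is a contradiction argument that uses the fundamental theorem of calculus to upgrade vanishing directional derivatives into constancy of $f$ along a subspace of dimension exceeding $D-d_e$, violating the minimality of $d_e$. The only cosmetic difference is that you package the contradiction globally via the span $\bW$ of all gradients, producing the reduced representation $f(\bx)=f(\bB\bB^T\bx)$ with fewer than $d_e$ columns, whereas the paper augments the constant-subspace basis $\bV$ by the single offending vector $\bv$ and establishes constancy along each added direction separately.
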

\begin{proof}
    Note that $\bv \in \calT^\bot$ implies $\bv^\top \nabla f(\bx) = 0$ for all $\bx \in \R^D$, as $\nabla f(\bx) \in \calT$ by \Cref{lem:grad_in_T}. It remains to prove that $\nabla f(\bx)^T \bv = 0$ for all $\bx \in \R^D$ implies $\bv \in \calT^\perp$. The proof is by contradiction. Let us assume that $\bv \notin \calT^\perp$, so that $\tilde \bV := [\bV, \bv]$ has full column rank, with $\bV$ defined in \Cref{ass:f_low_base}. Due to \Cref{lem:grad_in_T} and  $\nabla f(\bx)^T \bv = 0$, we deduce that $\nabla f(\bx)^T \tilde \bV = 0$ for all $\bx \in \R^D$, i.e., the gradient of $f$ is everywhere orthogonal to the $(D-d_e+1)$-dimensional subspace spanned by the columns of $\tilde \bV$. Let $\tilde \bv_i$ be the $i$th column of $\tilde \bV$. Then, for all $\bx \in \R^D$ and for all $i = 1, \dots, D-d_e+1$, the fundamental theorem of calculus gives:
    \[  f(\bx+ t \tilde \bv_i) - f(\bx) =  \int_0^t \nabla f(\bx+\tau \tilde \bv_i)^T\tilde \bv_i \mathrm{d} \tau  = 0,\]
    so that for all $\bx \in \R^D$, the objective $f$ is constant on the one-dimensional subspaces $\bx + \range(\tilde \bv_i)$, $i = 1, \dots, D-d_e+1$. According to \Cref{def_f_low}, $f$ is of effective dimension at most $d_e-1$, which contradicts \Cref{ass:f_low_base}.
\end{proof}

%Our main goal in this work is to estimate the effective subspace $\calT$, motivated by the following result.
The following lemma shows that if, ideally, we knew (a basis of) the effective subspace $\calT$, we could immediately reduce our original optimization problem (P) to solving the smaller-dimensional problem \eqref{RP} (in the subspace generated by this basis).
\begin{lemma}
\label{rp_as_suc}
	Let $f:\bR^{D} \rightarrow \bR$ satisfy Assumptions \ref{ass:f_low_base} and \ref{ass:C1}, and let $\bA = \bU$, with $\bU$ defined in\footnote{We note that $\bU$ can be any basis of the effective subspace $\calT$, and we refer to the notation in Assumption \ref{ass:f_low_base} for simplicity.} \Cref{ass:f_low_base}. Then, \eqref{RP} is successful according to \Cref{def:RPsuccessful}. 
\end{lemma}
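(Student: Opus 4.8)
The plan is to exhibit an explicit point $\by^* \in \R^{d}$ whose image under the embedding $\by \mapsto \bU \by + \bp$ attains the global optimum $f^*$, thereby verifying Definition \ref{def:RPsuccessful} directly. The whole argument rests on the effective-subspace invariance \eqref{eq:eff_subspace}, which states that $f(\bx) = f(\bU\bU^T\bx)$ for every $\bx \in \R^D$: the objective only sees the orthogonal projection of its argument onto $\calT = \range(\bU)$. This invariance suggests reverse-engineering the witness $\by^*$ from a global minimizer of \eqref{P}, and absorbing the (irrelevant) component of the shift $\bp$ lying outside $\calT$.

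First I would fix a global minimizer $\bx^*$ of \eqref{P}, so that $f(\bx^*) = f^*$; its existence is built into the notation $f^* = \min_{\bx \in \bR^D} f(\bx)$. The natural candidate for the witness is
\[
	\by^* := \bU^T(\bx^* - \bp) \in \R^{d_e},
\]
which is designed precisely so that the embedding $\bU\by^* + \bp$ has the same orthogonal projection onto $\calT$ as $\bx^*$ itself, the component of $\bp$ in $\calT^\perp$ being discarded by $f$.

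Then I would verify the claim by a short direct computation. Applying \eqref{eq:eff_subspace} to the point $\bU\by^* + \bp$ and using $\bU^T\bU = \bI_{d_e}$ gives
\[
	f(\bU\by^* + \bp) = f\bigl(\bU\bU^T(\bU\by^* + \bp)\bigr) = f\bigl(\bU\by^* + \bU\bU^T\bp\bigr).
\]
Substituting $\bU\by^* = \bU\bU^T(\bx^* - \bp)$ collapses the argument on the right-hand side to $\bU\bU^T\bx^*$, so that $f(\bU\by^* + \bp) = f(\bU\bU^T\bx^*)$, which in turn equals $f(\bx^*) = f^*$ by a second application of \eqref{eq:eff_subspace}. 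Hence $\by^*$ certifies that \eqref{RP} is successful.

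There is essentially no hard step here: the lemma is a clean consequence of the invariance \eqref{eq:eff_subspace}, and this is exactly what makes it the motivating ``ideal case'' for the algorithmic framework. The only points requiring care are the existence of a minimizer $\bx^*$ (assumed through the $\min$ notation) and the bookkeeping that ensures the chosen $\by^*$ correctly absorbs the arbitrary shift $\bp$ — both of which are handled by the single choice $\by^* = \bU^T(\bx^* - \bp)$.
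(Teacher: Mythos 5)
Your proof is correct and follows essentially the same route as the paper: both use the witness $\by^* = \bU^T(\bx^*-\bp)$ and the invariance of $f$ under projection onto $\calT$, differing only in bookkeeping (you apply $f(\bx)=f(\bU\bU^T\bx)$ to the whole argument, while the paper decomposes $\bp$ into its $\calT$ and $\calT^\perp$ components).
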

\begin{proof}
    Let $\bx^*$ be a global minimizer of \eqref{P}, and let $\by^* := \bU^T (\bx^*-\bp)$. Then, $f(\bU \by^* + \bp) = f(\bU \bU^T (\bx^*-\bp) + \bp) = f(\bx_\top^* -\bp_\top + (\bp_\top + \bp_\perp)) = f^*$, with $\bp_\top = \bU \bU^T \bp$ and $\bp_\perp = \bV \bV^T \bp$, the unique projection of $\bp$ onto $\calT$ and $\calT^\perp$, respectively.
\end{proof}

The following result provides a characterization of the effective subspace $\calT$ as the range of a matrix obtained by aggregating gradient information; thus connecting to the active subspace notion mentioned in Introduction and opening the way to estimating $\calT$.

\begin{theorem}\label{thm:effective_equal_active}
    Let $f:\bR^{D} \rightarrow \bR$ satisfy \Cref{ass:C1}, and let $\rho$ have support over the whole $\R^D$. Let $\bC$ be defined as in \eqref{C}.
% \begin{align}
%\label{C}
%	\bC = \int \nabla f(\bx) \nabla f(\bx)^{T} \rho(\bx) \ \text{d}\bx \in \bR^{D \times D}.
%\end{align}
Then, $\rank(\bC) = d_e$ if and only if $f$ has effective dimensionality $d_e$. In this case, the effective subspace of $f$ is $\calT = \range(\bC)$.
\end{theorem}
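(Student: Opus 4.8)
The plan is to characterize the null space of $\bC$ directly in terms of the gradient of $f$, and then translate this into statements about the effective subspace. Since $\bC$ is symmetric positive semidefinite, $\range(\bC) = \nll(\bC)^\perp$, so it suffices to identify $\nll(\bC)$. The starting point is the identity, valid for any $\bv \in \R^D$,
\[
 \bv^T \bC \bv = \int \big(\nabla f(\bx)^T \bv\big)^2 \rho(\bx)\, d\bx ,
\]
so that $\bv \in \nll(\bC)$ if and only if this integral vanishes.

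The key step, which I expect to be the main (if standard) technical point, is to show that
\[
 \nll(\bC) = \mathcal{N} := \{\bv \in \R^D : \nabla f(\bx)^T \bv = 0 \text{ for all } \bx \in \R^D\}.
\]
One inclusion is immediate, since $\bv \in \mathcal{N}$ makes the integrand identically zero. For the converse, the integrand is continuous and nonnegative, so a vanishing integral forces it to equal zero $\rho$-almost everywhere; were $\nabla f(\bx_0)^T \bv \neq 0$ for some $\bx_0$, continuity would give an open neighbourhood on which the integrand is strictly positive, and this neighbourhood has positive $\rho$-mass because $\rho$ has full support, a contradiction. This is exactly where the full-support hypothesis on $\rho$ and \Cref{ass:C1} are used.

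For the forward implication (if $f$ has effective dimensionality $d_e$, then $\rank(\bC) = d_e$), I would combine this characterization with \Cref{lem:grad_perp_Tperp}: since $f$ having effective dimensionality $d_e$ means \Cref{ass:f_low_base} holds, that lemma gives $\calT^\perp = \mathcal{N}$. Hence $\nll(\bC) = \calT^\perp$, so $\range(\bC) = \calT$ and $\rank(\bC) = \dim \calT = d_e$.

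For the converse (if $\rank(\bC) = d_e$, then $f$ has effective dimensionality $d_e$), I would use that $\mathcal{N} = \nll(\bC)$ now has dimension $D - d_e$, and argue in two steps. For the upper bound, any $\bv \in \mathcal{N}$ satisfies $\nabla f(\bx)^T \bv \equiv 0$, so the fundamental theorem of calculus (exactly as in the proof of \Cref{lem:grad_perp_Tperp}) shows $f$ is constant along $\bv$; writing $\bP$ for the orthogonal projector onto $\mathcal{N}^\perp$, this gives $f(\bx) = f(\bP \bx)$, so $\mathcal{N}^\perp$, of dimension $d_e$, is an effective subspace and the effective dimensionality is at most $d_e$. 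For minimality, suppose $f$ admitted an effective subspace $\calT'$ of dimension $d' < d_e$; differentiating $f(\bx + t \bv) = f(\bx)$ for $\bv \in \calT'^\perp$ shows $\calT'^\perp \subseteq \mathcal{N}$, yet $\dim \calT'^\perp = D - d' > D - d_e = \dim \mathcal{N}$, a contradiction. Hence the effective dimensionality equals $d_e$. In both directions $\range(\bC) = \mathcal{N}^\perp$ coincides with the effective subspace $\calT$, which yields the final claim.
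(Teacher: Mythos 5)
Your proof is correct, but it is organized differently from the paper's. The paper treats the two directions with separate arguments: for the ``if'' part it factors $\bC = \bU \bC_{\red} \bU^T$ through the known effective basis and shows that $\bC_{\red}$ is nonsingular; for the ``only if'' part it shows that $\trace(\bW^{\perp T}\bC\bW^\perp)=0$ forces $\bW^{\perp T}\nabla f(\bx) = 0$ everywhere, applies the mean value theorem to obtain constancy along $\range(\bW^\perp)$, and rules out a smaller effective dimension via a rank bound on a factorization of $\bC$. You instead establish once that $\nll(\bC) = \mathcal{N} := \{\bv : \nabla f(\bx)^T\bv = 0 \text{ for all } \bx\}$ and deduce both directions from it, using \Cref{lem:grad_perp_Tperp} to identify $\mathcal{N}$ with $\calT^\perp$ in the forward direction and a dimension count on $\mathcal{N}$ for the converse. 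The analytic kernel is the same in both proofs --- a vanishing integral of a nonnegative continuous integrand against a fully supported density forces the integrand to vanish identically --- the paper invokes it twice, you only once; your single null-space characterization unifies the two directions, makes $\range(\bC)=\calT$ (and its uniqueness) immediate, and is in fact slightly more careful about what ``full support'' buys (positive mass on every open set, combined with continuity of $\nabla f$) than the paper's pointwise-positivity shortcut. The only implicit step worth stating explicitly is the standard fact that for positive semidefinite $\bC$ one has $\bv^T\bC\bv = 0$ if and only if $\bC\bv = \mvec{0}$, which you need to pass from the quadratic form to the null space.
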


\begin{proof}
    (Proof of If Part)
   Due to \Cref{lem:grad_in_T}, for all $\bx \in \R^D$, we know that $\nabla f(\bx) \in \calT$, so that $\nabla f(\bx) = \bU \bU^T \nabla f(\bx)$ and  
    \begin{equation}\label{eq:C=UC_redU^T}
        \bC = \bU \left( \int \bU^T \nabla f(\bx) \nabla f(\bx)^T \bU \rho(\bx) \mathrm{d}\bx \right) \bU^T.
    \end{equation}
    Denote $\bC_\red :=  \int  \bU^T \nabla f(\bx) \nabla f(\bx)^T \bU \rho(\bx) \mathrm{d}\bx \in \R^{d_e \times d_e}$ such that $\bC = \bU \bC_\red \bU^T$. 
    It is enough to show that $\bC_\red$ is full rank. By contradiction, let us assume that there exists $\bv \in \R^{d_e}, \bv \neq 0$ such that $\bC_\red \bv = 0$, hence, 
    \[\bv^T \bC_\red \bv = \int \bv^T \bU^T \nabla f(\bx) \nabla f(\bx)^T \bU \bv \rho(\bx) \mathrm{d}\bx = 0. \]
    Since $\rho(\bx) > 0$ for all $\bx \in \R^{D}$, we have $\bv^T \bU^T \nabla f(\bx) \nabla f(\bx)^T \bU \bv = (\nabla f(\bx)^T \bU \bv)^2 = 0$ for all $\bx \in \R^{D}$. By definition of $\bU$, the vector $\bU \bv$ belongs to the effective subspace $\calT$, while $\nabla f(\bx)^T \bU \bv = 0$ for all $\bx \in \R^D$ implies $\bU \bv \in \calT^\bot$ (see \Cref{lem:grad_perp_Tperp}). The only possibility is then $\bU \bv = \bzero$, which implies $\bv = \bzero$ as $\bU$ has full column rank. This contradicts our assumption that $\bv \neq \bzero$, and proves that $\bC_\red$ has full rank. 
    
     (Proof of Only If part)
    We need to prove that $\rank(\bC) = d_e$ implies i) that there is a linear subspace $\mathcal{T}$ of dimension $d_e$ such that
    \begin{equation}\label{eq:thm_f(x)=f(x_top)}
        f(\bx_{\top} + \bx_{\perp}) = f(\bx_{\top}) \;\; \text{for all $\bx_{\top} \in \mathcal{T}$ and $\bx_{\perp} \in \mathcal{T}^{\perp}$}
    \end{equation}
    and ii) that $d_e$ is the smallest integer for which part i) holds. 

    For part i), we claim that \eqref{eq:thm_f(x)=f(x_top)} works for $\mathcal{T} = \range(\bC)$. Let $\bW \in \R^{D \times d_e}$ be any orthonormal basis of $\range(\bC)$, and let $\bW^\perp \in \R^{D \times (D-d_e)}$ be an orthonormal basis of the orthogonal complement of $\range(\bC)$; then, note that the matrix $[\bW\, \,\bW^\perp] \in \R^{D \times D}$ is orthogonal.  For any $\bx \in \mathbb{R}^{D}$, we can write 
    \begin{equation*}
        \bx = \bW \bW^T \bx + \bW^\perp \bW^{\perp T} \bx = \bW \by + \bW^\perp \bz,
    \end{equation*}
    where $\by := \bW^T \bx$ and $\bz := \bW^{\perp T} \bx$. Let us now define $h(\bz) := f(\bW \by + \bW^\perp \bz)$. Note that then, $\nabla h(\bz) =\bW^{\perp T} \nabla f(\bx)$. 
    Furthermore,
    \begin{equation*} 
    \begin{split}
       & \int{\left( \nabla h(\bW^{\perp T} \bx) \right)^\top \left( \nabla h(\bW^{\perp T} \bx) \right) \rho(\bx) \,d\bx}
         \\&= \trace \left( \int{ \nabla h(\bW^{\perp T} \bx)  \nabla h(\bW^{\perp T} \bx)^T \rho(\bx) \,d\bx} \right) \\&
         = \trace \left( \bW^{\perp T} \left( \int{ \nabla f(\bx) \nabla f(\bx)^\top \rho(\bx) \,d\bx} \right) \bW^\perp \right) 
         = \trace \left( \bW^{\perp T} \bC \bW^\perp \right),
        \end{split}
    \end{equation*}
    which is zero by definition of $\bW^\perp$. Hence,
    \begin{equation*} 
        \int{||\nabla h(\bW^{\perp T} \bx)||_{2}^{2} \rho(\bx) \,d\bx} = \int{\left(\nabla h(\bW^{\perp T} \bx)\right)^T  \nabla h(\bW^{\perp T} \bx) \rho(\bx) \,d\bx} = 0.
    \end{equation*}
    Since $\rho(\bx) > 0$ everywhere in the domain, this implies that 
    \begin{equation} \label{eq:gradient_equals_matrix_transpose}
        \nabla h(\bW^{\perp T} \bx) = \bW^{\perp T} \nabla f(\bx) = 0 \;\; \text{for all $\bx \in \mathbb{R}^{D}$.}
    \end{equation}
    
   Now, let $\bx_{\top}$ be any vector in $\range(\bW)$ and $\bx_{\perp}$ be any vector in $\range(\bW^\perp)$. According to the mean value theorem, there exists $c \in [0,1]$ such that
   \begin{equation*}
       \begin{aligned}
        f(\bx_{\top} + \bx_{\perp}) & = f(\bx_{\top}) + \left( (\bx_{\top} + \bx_{\perp}) - \bx_{\top} \right)^T \nabla f \left( (1-c)\bx_{\top} + c(\bx_{\top} + \bx_{\perp}) \right)  \\
        & = f(\bx_{\top}) + \bx_{\perp}^T \nabla f(\bx_{\top} + c\bx_{\perp})  \\
        & = f(\bx_{\top}) + \left( \bW^\perp \bz \right)^T \nabla f(\bx_{\top} + c\bx_{\perp}) \\
        & = f(\bx_{\top}),
    \end{aligned}
   \end{equation*}
    where we used $\bx_{\perp} = \bW^\perp \bz$ (since $\bx_{\perp} \in \range(\bW^\perp)$) for some $\bz$ and \eqref{eq:gradient_equals_matrix_transpose}. This proves part i).
    
    We prove part ii) by contradiction. Suppose that there exists an integer $k < d_e$ for which \eqref{eq:thm_f(x)=f(x_top)} holds. Then, according to \Cref{def:g} there exists a function $g : \mathbb{R}^k \rightarrow \mathbb{R}$ such that $f(\bx) = g(\bar{\bU}^T\bx)$ for some orthonormal matrix $\bar{\bU} \in \mathbb{R}^{D \times k}$;  implying,
    \begin{align} \label{eq:proof_by_contradiction_d_e}
        \bC = \int{(\nabla f(\bx))(\nabla f(\bx))^\top} \rho(\bx) \,d\bx & = \int{\bar{\bU} \nabla g(\bar{\bU}^\top \bx) \left( \nabla g(\bar{\bU}^\top \bx) \right)^\top \bar{\bU}^\top \rho(\bx) \,d\bx} \nonumber \\
        & = \bar{\bU} \left( \int{\nabla g(\bar{\bU}^\top \bx) \left( \nabla g(\bar{\bU}^\top \bx) \right)^\top \rho(\bx) \,d\bx} \right) \bar{\bU}^\top.
    \end{align}
    Note that the rank of matrix in \eqref{eq:proof_by_contradiction_d_e} is at most $k$, which contradicts the fact that $\rank(\bC) = d_e > k$.

\end{proof}

\section{Estimating the active subspace}
\label{sec: estim_success}

Although \Cref{rp_as_suc} guarantees that \eqref{RP} is successful for $\bA = \bU$, the computation of $\bU$ according to \Cref{thm:effective_equal_active} relies on $\bC$, which cannot be computed exactly and is typically estimated using Monte Carlo sampling, as recalled in \Cref{alg:est_C}.

\begin{algorithm}
\begin{algorithmic}[1]
\State Generate $M$ samples $\{ \bx_{S}^j \}$ independently from the probability density function $\rho$.
\State Compute 
$\nabla f(\bx_{S}^j)$ for $j=1,\cdots,M$.
\State Return the estimate $\hat \bC$:
 \begin{equation*}% \label{eq:hatC}
    \hat{\bC} := \frac{1}{M} \sum_{j=1}^{M}\nabla f(\bx_{S}^j)\nabla f(\bx_{S}^j)^{T}.
\end{equation*}
\end{algorithmic}
\caption{Estimating the gradient matrix $\bC$ by random sampling, extracted from \cite[Algorithm 3.1]{constantine2015book}.}
\label{alg:est_C}
\end{algorithm}

% \begin{Def}[Estimated effective subspace] \label{def:hat_U}
% We define the estimated effective subspace as the subspace spanned by the matrix 
% \begin{equation}\label{eq:approx_U}
%     \hat\bU = [\hat \bw_1, \dots, \hat \bw_{d}],
% \end{equation}
% where $\hat \bw_1, \dots, \hat \bw_{d}$ are the eigenvectors of the matrix $\hat \bC$, defined in \eqref{eq:hatC}, associated to eigenvalues $\hat \lambda_1 \geq  \dots \geq \hat \lambda_{d}$, where $\hat \lambda_{d}$ is the smallest strictly positive eigenvalue of $\hat \bC$.
% \end{Def}

The main goal of this section is to relate the probability of success of the reduced problem \eqref{RP} with $\bA = \hat \bU$, where $\hat \bU \in \R^{D \times d}$ is any full column rank matrix whose columns span $\range(\hat \bC)$, to the sampling number $M$. The next two lemmas show that if $\range(\hat \bC)$ has dimension $d_e$, then \eqref{RP} is successful for this choice of $\bA$.  

\begin{lemma}\label{lem:d<=d_e} Let $f$ satisfy Assumptions \ref{ass:f_low_base} and \ref{ass:C1}, $\rho$ be a probability distribution on $\R^D$, $\hat \bC$ be defined according to \eqref{eq:hatC}, and $d := \rank(\hat \bC)$. Then, $d \leq  d_e$. 
\end{lemma}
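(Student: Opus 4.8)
The plan is to confine all sampled gradients to the effective subspace $\calT$ using \Cref{lem:grad_in_T}, and then exploit the elementary fact that the range of a sum of rank-one outer products cannot be larger than the span of the generating vectors. This immediately caps $\rank(\hat\bC)$ by $\dim \calT = d_e$.

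First I would recall that $\hat\bC$ is, up to the factor $1/M$, a sum of rank-one terms $\nabla f(\bx_{S}^j)\nabla f(\bx_{S}^j)^T$. The key linear-algebra observation is that for any vectors $g_1,\dots,g_M \in \R^D$, the matrix $\sum_j g_j g_j^T$ has its range contained in $\spanvec\{g_1,\dots,g_M\}$: for any $\bv \in \R^D$, $\bigl(\sum_j g_j g_j^T\bigr)\bv = \sum_j (g_j^T\bv)\, g_j$ is a linear combination of the $g_j$. Applying this with $g_j = \nabla f(\bx_{S}^j)$ yields $\range(\hat\bC) \subseteq \spanvec\{\nabla f(\bx_{S}^1),\dots,\nabla f(\bx_{S}^M)\}$.

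Next, \Cref{lem:grad_in_T} guarantees that each gradient $\nabla f(\bx_{S}^j)$ lies in $\calT = \range(\bU)$, a subspace of dimension $d_e$. Hence the span of the sampled gradients, and therefore $\range(\hat\bC)$, is contained in $\calT$, giving $d = \rank(\hat\bC) = \dim \range(\hat\bC) \le \dim \calT = d_e$, as claimed.

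There is essentially no obstacle here; the only point that deserves to be stated explicitly rather than left implicit is that the column space of a sum of outer products is spanned by the generating vectors. Everything else follows directly from \Cref{lem:grad_in_T} together with monotonicity of dimension under subspace inclusion.
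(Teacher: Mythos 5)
Your proof is correct and rests on the same key fact as the paper's, namely that every sampled gradient lies in the $d_e$-dimensional effective subspace $\calT$; the paper packages this as the factorization $\hat\bC = \bU\hat\bC_{\red}\bU^T$ with $\hat\bC_{\red}\in\R^{d_e\times d_e}$, while you phrase it as a range inclusion, but the argument is essentially identical.
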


\begin{proof}
By \Cref{def:g} there holds
$$ \hat \bC = \frac{1}{M} \sum_{j=1}^{M}\nabla f(\bx_{S}^j)\nabla f(\bx_{S}^j)^{T} = \bU  \left(\frac{1}{M} \sum_{j=1}^{M}\nabla g( \bU^T \bx_{S}^j)\nabla g( \bU^T \bx_{S}^j)^{T} \right) \bU^T. $$
Let us define $\hat \bC_\red := \frac{1}{M} \sum_{j=1}^{M}\nabla g( \bU^T \bx_{S}^j)\nabla g( \bU^T \bx_{S}^j)^{T}$ so that $\hat \bC = \bU \hat \bC_\red \bU^T$. As $\hat \bC_\red \in \R^{d_e \times d_e}$, $\rank(\hat \bC) \leq d_e$.  
\end{proof}

\begin{lemma} \label{lem:d=de_implies_success}
Let $f$ satisfy Assumptions \ref{ass:f_low_base} and \ref{ass:C1}, $\rho$ be a probability distribution on $\R^D$, $\bC$ and $\hat \bC$ be defined according to \eqref{C} and \eqref{eq:hatC} respectively, and $\hat \bU \in \R^{D \times d}$ be a full column rank matrix whose columns span $\range(\hat \bC)$. The reduced problem \eqref{RP} is successful with $\bA = \hat \bU$ if $d = d_e$.
\end{lemma}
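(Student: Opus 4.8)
The plan is to reduce this to \Cref{rp_as_suc} by showing that, under the hypothesis $d = d_e$, the matrix $\hat \bU$ is in fact a basis of the effective subspace $\calT$. The footnote to \Cref{rp_as_suc} already records that the success conclusion holds for \emph{any} basis of $\calT$, not just the particular orthonormal $\bU$ of \Cref{ass:f_low_base}, so once I establish $\range(\hat \bU) = \calT$, the result follows immediately.

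First I would invoke the factorisation established in the proof of \Cref{lem:d<=d_e}, namely $\hat \bC = \bU \hat \bC_\red \bU^T$ with $\hat \bC_\red \in \R^{d_e \times d_e}$. Since every column of $\hat \bC$ is of the form $\bU(\cdot)$, this gives the containment $\range(\hat \bC) \subseteq \range(\bU) = \calT$. This is the one structural fact I need to carry over; everything else is a dimension count.

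Next I would combine the containment with the rank hypothesis. By definition $\dim \range(\hat \bC) = \rank(\hat \bC) = d$, and by assumption $d = d_e = \dim \calT$. A subspace contained in another subspace of the same finite dimension must coincide with it, so $\range(\hat \bC) = \calT$. Because $\hat \bU$ has full column rank $d = d_e$ and its columns span $\range(\hat \bC)$, the matrix $\hat \bU$ is therefore a basis of $\calT$.

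Finally I would apply \Cref{rp_as_suc} with $\bA = \hat \bU$: since $\hat \bU$ is a basis of $\calT$, the reduced problem \eqref{RP} is successful in the sense of \Cref{def:RPsuccessful}, completing the proof. I do not anticipate a genuine obstacle here; the only point requiring care is the logical order, in that the containment $\range(\hat \bC) \subseteq \calT$ must be secured \emph{before} the dimension argument forces equality, and one should note explicitly that $\hat \bU$ spanning $\range(\hat \bC)$ together with full column rank $d_e$ is exactly what qualifies it as a basis so that the (basis-independent) conclusion of \Cref{rp_as_suc} applies.
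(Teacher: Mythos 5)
Your proposal is correct and follows essentially the same route as the paper's proof: both reduce the claim to showing $\range(\hat{\bU}) = \range(\hat{\bC}) = \range(\bU) = \calT$ via the factorisation $\hat{\bC} = \bU \hat{\bC}_{\red} \bU^T$ from \Cref{lem:d<=d_e}, and then invoke \Cref{rp_as_suc}. The only cosmetic difference is that you conclude equality of the subspaces by a containment-plus-dimension count, whereas the paper notes that $\rank(\hat{\bC}) = d_e$ forces $\hat{\bC}_{\red}$ to be full rank; these are equivalent.
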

\begin{proof} 
\eqref{RP} is successful with $\bA = \hat \bU$ if there exists $\by \in \R^{d_e}$ such that $f(\hat \bU \by + \bp) = f^*$. In \Cref{rp_as_suc}, we showed that, for any global minimizer $\bx^*$ of \eqref{P}, $\by^* = \bU^T (\bx^*-\bp)$ satisfies $f(\bU \by^* + \bp) = f^*$. It is thus sufficient to prove that $d = d_e$ implies $\range(\hat \bU) = \range(\hat \bC) = \range(\bU)$. The first equality holds by definition. For the second, by the proof of \Cref{lem:d<=d_e}, $\hat \bC = \bU \hat \bC_\red \bU^T$. If $d = \rank(\hat \bC) =d_e$, then  $\hat \bC_\red$ must have full rank, and $\range(\hat \bC) = \range(\bU)$.  %. It is therefore enough to show that there exists a $\by$ such that $\hat \bU \by = \bU \by^*$, 
%the result follows then from the fact that $\range(\hat \bU) = \range(\bU)$.
\end{proof}

These two lemmas prove that \eqref{RP} is successful if the set $\{ \nabla f(\bx_S^j), \dots, \nabla f(\bx_S^M)\}$ contains $d_e$ linearly independent vectors; in this case, the span of these vectors contains all required information for solving the original problem \eqref{P}. Since $\hat \bC$ is a random matrix, the event that it has rank $d_e$ is a random event. %We next derive a lower bound on the probability of the sampled gradients to be linearly independent. 
If the $d_e$th largest eigenvalue of $\hat \bC$ is a continuous random variable,  it is nonzero with probability one, so that \eqref{RP} is successful with probability one. Unfortunately, we cannot guarantee continuity due to the weak assumptions on $\rho$ and $f$, as illustrated by the following example.

\begin{example} \label{ex:polynomial_example} Consider the function $\bar f : \R \to \R$ illustrated in \Cref{fig:polynomial_example} and defined as follows: 
\begin{equation}  \label{eq:polynomial_example}
\bar f(x) = \left\{ \begin{array}{lr} 
0 & x < -1 \\
-x^4+2x^2-1 & x \in [-1,1] \\
0 & x > 1 \\
\end{array}\right. .
\end{equation}
Let $\rho$ be the uniform distribution on $[-2,2]$, and $x_S^1, \dots, x_S^M$ be sampled independently at random from $\rho$. With probability $(1/2)^M$,  $x_S^i \in [-2,-1] \cup [1,2]$ for all $i$, hence, $\bar f'(x_S^i) = 0$ for all $i$ and $\hat \bC = 0$.% in \Cref{alg:est_C}. 

\begin{figure}[h!]
    \centering
\includegraphics[scale = 0.5]{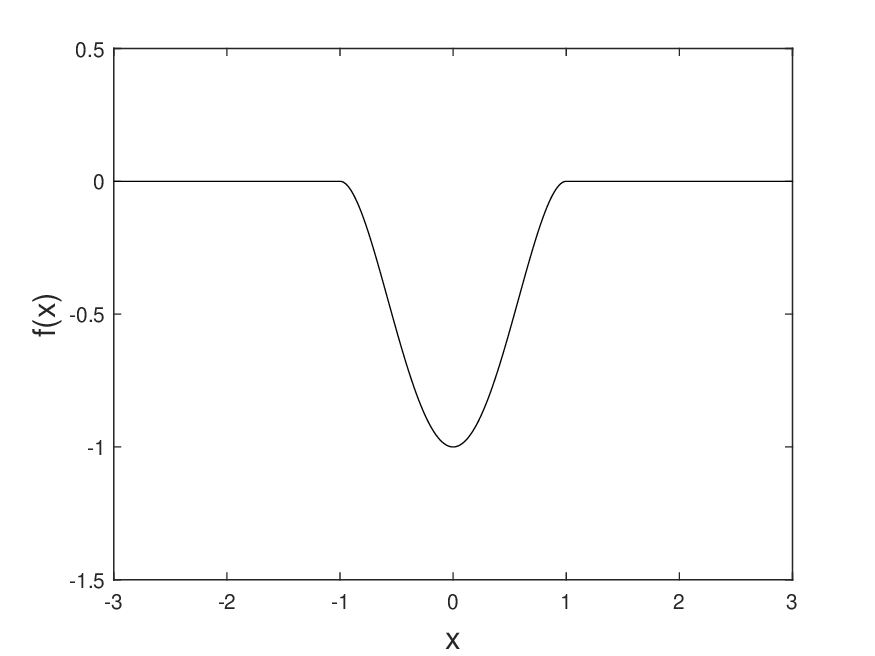}
    \caption{Illustration of the function $\bar f$ defined in \eqref{eq:polynomial_example}.}
    \label{fig:polynomial_example}
\end{figure}
\end{example}

%The rest of this section \ema{quantifes with $M$} the probability that $\hat \bC$ has rank $d_e$.%, under weak assumptions on $f$ and $\rho$.

\begin{remark} \label{rmk:computation_basis}The reduced problem \eqref{RP} involves $\hat \bU$, a basis of $\range(\hat \bC)$, whose  computation requires the sampled gradients to be sufficiently far from colinearity\footnote{In \Cref{sec:numerics}, we rely on the Gram-Schmidt procedure to compute the columns of $\hat \bU$. Though in theory orthonormality is not needed, arbitrary bases may lead to  bad conditioning for the reduced problem \eqref{RP}.}.
\end{remark}

The next result provides a lower bound on the number of samples $M$ needed for $\hat \lambda_{d_e}$, the $d_e$th largest eigenvalue of $\hat \bC$, to be at least $\tau \lambda_{d_e}$, with $\lambda_{d_e}$ the $d_e$th largest eigenvalue of the exact gradient matrix $\bC$ and $\tau \in [0,1)$ an arbitrary constant; taking $\tau = 0$ amounts to require $\hat \lambda_{d_e} > 0$. These bounds simply extend \cite[Section 7]{gittens2011} and its generalization to non-Gaussian distributions in \cite{constantine2015book} when applied to low-rank functions. The key underlying result is the matrix Bernstein inequality for sub-exponential matrices \cite[Theorem 5.3]{gittens2011}. %This inequality will allow us to show that $\hat \lambda_{d_e}$ is concentrated around $\lambda_{d_e}$ with high probability. 
To use this result, the matrices $\nabla f(\bx_S^j) \nabla f(\bx_S^j)^T$ must satisfy the subexponential moment growth condition (see \cite[Equation (3.26)]{constantine2015book}), which is guaranteed if the gradient of $f$ is uniformly bounded.  

\begin{assumption}\label{ass:BoundedGrad}
There exists $L > 0$ such that $\|\nabla f(\bx) \| \leq L$ for all $\bx \in \R^D$.
\end{assumption}

\begin{theorem}\label{thm:estimate_on_M}
    Let $f$ satisfy Assumptions \ref{ass:f_low_base}, \ref{ass:C1} and \ref{ass:BoundedGrad}, $\rho$ be a probability distribution with support the whole of $\R^D$, $\bC$ and $\hat \bC$ defined according to \eqref{C} and \eqref{eq:hatC} respectively, with eigenvalues\footnote{Note that, by \Cref{thm:effective_equal_active} and \Cref{lem:d<=d_e}, $\bC$ and $\hat \bC$ have at most $d_e$ nonzero eigenvalues.} $\lambda_1 \geq \lambda_2 \geq \dots \geq \lambda_{d_e} \geq 0$ and $\hat{\lambda}_1 \geq \hat{\lambda}_{2} \geq \dots \geq \hat{\lambda}_{d_e} \geq 0$. For all $\alpha \in (0,1]$, $\tau \in [0,1)$, and $k \in \{1, \dots, d_e\}$, 
    \begin{equation} \label{eq:bound_proba_tau}  \text{if} \qquad M \geq \frac{4 \lambda_1 L^2}{\lambda_{k}^2 (1-\tau)^2} \log\left(\frac{k}{\alpha}\right), \qquad \text{then} \qquad \prob\left[ \hat \lambda_{k} > \tau \lambda_{k} \right] \geq 1-\alpha.
    \end{equation}
    In particular, for all $\alpha \in (0,1]$,
    \begin{equation} \label{eq:final_bound_proba}
    \text{if} \qquad M \geq \frac{4 \lambda_1 L^2}{\lambda_{d_e}^2} \log\left(\frac{d_e}{\alpha}\right) \qquad \text{then}  \qquad \prob\left[ \eqref{RP} \; \text{is successful} \right] \geq 1-\alpha.
    \end{equation}
\end{theorem}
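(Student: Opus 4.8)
The plan is to reduce both inequalities in the theorem to a single matrix concentration estimate on the $k$th eigenvalue of $\hat \bC$, and then specialise. First I would record that $\hat \bC$ is an unbiased estimator of $\bC$: writing $\bX_j := \nabla f(\bx_S^j)\nabla f(\bx_S^j)^T$, the samples $\bX_1, \dots, \bX_M$ are i.i.d.\ positive semidefinite random matrices with $\expv[\bX_j] = \bC$ and $\hat \bC = \frac{1}{M}\sum_{j=1}^M \bX_j$. Under \Cref{ass:BoundedGrad}, each summand obeys the uniform bound $\|\bX_j\| = \|\nabla f(\bx_S^j)\|^2 \leq L^2$, and a short computation gives $\bX_j^2 = \|\nabla f(\bx_S^j)\|^2 \bX_j \preceq L^2 \bX_j$, so that the variance proxy satisfies $\|\expv[\bX_j^2]\| \leq L^2 \|\bC\| = L^2 \lambda_1$. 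These two facts verify the subexponential moment growth condition required to invoke the matrix Bernstein inequality \cite[Theorem 5.3]{gittens2011}, as specialised to the active subspace setting in \cite{constantine2015book}.

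Second, I would apply that concentration result to obtain a one-sided (lower-tail) bound on each eigenvalue of the form
\begin{equation*}
\prob\left[ \hat \lambda_k \leq \tau \lambda_k \right] \leq k \exp\left(-\frac{M(1-\tau)^2\lambda_k^2}{4\lambda_1 L^2}\right), \qquad k \in \{1, \dots, d_e\},
\end{equation*}
where the deviation $(1-\tau)\lambda_k$ enters squared, the variance proxy $\lambda_1 L^2$ appears in the denominator, and the factor $4$ absorbs the Bernstein constants. Requiring the right-hand side to be at most $\alpha$ and solving for $M$ yields exactly the threshold $M \geq \frac{4\lambda_1 L^2}{\lambda_k^2(1-\tau)^2}\log(k/\alpha)$ of \eqref{eq:bound_proba_tau}; passing to complements then gives $\prob[\hat \lambda_k > \tau \lambda_k] \geq 1-\alpha$, which proves the first part.

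For the second statement I would specialise to $k = d_e$ and $\tau = 0$, so the threshold collapses to $M \geq \frac{4\lambda_1 L^2}{\lambda_{d_e}^2}\log(d_e/\alpha)$ and the event of interest becomes $\{\hat \lambda_{d_e} > 0\}$. By \Cref{lem:d<=d_e}, $\hat \bC$ has at most $d_e$ nonzero eigenvalues, so $\hat \lambda_{d_e} > 0$ is equivalent to $\rank(\hat \bC) = d_e$; by \Cref{lem:d=de_implies_success}, this in turn forces \eqref{RP} to be successful. Chaining these implications gives $\prob[\eqref{RP}\ \text{is successful}] \geq \prob[\hat \lambda_{d_e} > 0] \geq 1 - \alpha$, as required.

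The main obstacle is isolating the precise eigenvalue concentration inequality with the stated constants from the cited matrix Bernstein machinery: the results in \cite{gittens2011, constantine2015book} are usually phrased either for the spectral-norm deviation $\|\hat \bC - \bC\|$ or for the extreme eigenvalues, so some care is needed to extract a clean one-sided bound on the individual $k$th eigenvalue (for instance via the Courant--Fischer characterisation together with a union bound producing the prefactor $k$). Once this eigenvalue bound is secured, the remaining algebra for the threshold on $M$ and the rank-to-success reduction are routine.
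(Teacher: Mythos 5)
Your proposal is correct and follows essentially the same route as the paper: the paper likewise obtains the tail bound $\prob[\hat\lambda_k \leq \tau\lambda_k] \leq k\exp\bigl(-M\lambda_k^2(1-\tau)^2/(4\lambda_1 L^2)\bigr)$ by direct citation of the matrix-Bernstein result of Constantine (Theorem~3.3 there, itself built on Gittens--Tropp), solves for $M$, and then specialises to $\tau=0$, $k=d_e$ before chaining \Cref{lem:d<=d_e} and \Cref{lem:d=de_implies_success} to convert $\{\hat\lambda_{d_e}>0\}$ into success of \eqref{RP}. The only difference is that you spell out the verification of the subexponential moment growth condition ($\|\bX_j\|\le L^2$, $\expv[\bX_j^2]\preceq L^2\bC$), which the paper delegates to the cited reference.
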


\begin{proof}
The first claim is an immediate consequence of \cite[Theorem 3.3]{constantine2015book} that shows
\begin{equation} \label{eq:bernstein}
    \prob \left[ \hat \lambda_{k}  \leq \tau \lambda_{k}  \right] \leq k \exp \left( \frac{-M\lambda_{k}^2 (1-\tau)^2}{4 \lambda_1 L^2} \right) \qquad \forall \tau \in [0,1), k = 1, \dots, d_e.
\end{equation}
For the second claim, \Cref{lem:d<=d_e} and \Cref{lem:d=de_implies_success} give
\begin{equation*}
    \prob\left[ \eqref{RP} \; \text{is successful} \right] = 1-\prob[d \neq d_e] = 1-\prob[d < d_e] = 1 - \prob[\hat \lambda_{d_e} = 0].
\end{equation*}
The claim then follows from letting $\tau = 0$ and $k = d_e$ in \eqref{eq:bernstein}.  
\end{proof}

Equation \eqref{eq:final_bound_proba} is a lower bound on the number of samples $\bx_S^1, \dots, \bx_S^M$ needed to ensure that the gradients $\nabla f(\bx_S^1), \dots, \nabla f(\bx_S^M)$ are linearly independent. On the other hand, \eqref{eq:bound_proba_tau} allows controlling the spectrum of $\hat \bC$, which is also important in light of \Cref{rmk:computation_basis}. %provides a lower bound on the number of samples for the $d_e$th largest eigenvalue of $\hat \bC$ to be at most $\tau$ smaller than the $d_e$th largest eigenvalue of $\bC$;
%, to ease the computation of a basis of $\range(\hat \bC)$. 
%\emcomm{I'm not sure whether we need a new subsection here}
%\subsection{Interpretation of the bound for one-dimensional objectives}
The following example illustrates this bound on the function $\bar f : \R \to \R$ from \Cref{ex:polynomial_example}.
\begin{example}  \label{ex:back_to_polynomial_example_v2} Let $\rho$ be a standard Gaussian density, and let $\bar f$ be defined in \eqref{eq:polynomial_example}. It can be readily checked that $\bar f$ satisfies $|\bar{f}'(x)| \leq 8/(3 \sqrt{3})$ for all $x$, so that $L = 8/(3 \sqrt{3})$ in \Cref{ass:BoundedGrad}. 
On the other hand, we have 
\[  C = \int (\bar{f}'(x))^2 \rho(x) dx = 16 \int_{-1}^1 x^2 (1-x^2)^2 \rho(x) dx \simeq 0.83.\]
Equation \eqref{eq:final_bound_proba} with $d_e = 1$ becomes
\[ \text{if} \qquad M \geq \frac{4L^2}{C} \log(1/\alpha) \simeq 11.5 \log(1/\alpha) \qquad \text{then} \qquad \prob[\hat C \neq 0] \geq 1-\alpha.\] 
Note that the probability of $\hat C$ to be zero can be computed exactly. Let $x_S^1, \dots, x_S^M \in \R$ be sampled identically at random according to $\rho$, and let $\hat C := \frac{1}{M} \sum_{i = 1}^M (\bar{f}'(x_S^i))^2$. As $\bar{f}'(x)$ is zero if and only if $x \in ]-\infty, -1] \cup \{0\} \cup [1, \infty[$,
\[ \prob[\bar{f}'(x) = 0] = \int_{-\infty}^{-1} \rho(x) dx + \int_{1}^{\infty} \rho(x) dx = 1-(F_\rho(1)-F_{\rho}(-1)) \simeq 0.317,\]
where $F_\rho(x)$ is the cumulative density function of the standard Gaussian distribution. It follows that 
\[ \prob[\hat C \neq 0] = 1- (\prob[\bar{f}'(x) = 0])^M \simeq 1-0.317^M, \]
so that $M \geq 0.87 \log(1/\alpha)$ implies $\prob[\hat C \neq 0] \geq 1-\alpha.$ 
\end{example}

The discrepancy between the two lower bounds on $M$ in the previous example can be explained from the fact that the Bernstein inequality on which \eqref{eq:final_bound_proba} relies only exploits generic information on the density $\rho$, namely, $C$ and $L$, and does not use the flatness of $\bar f$ in some parts of its domain.

\begin{remark} \Cref{ex:back_to_polynomial_example_v2} shows that the curvature of the objective worsens the bound \eqref{eq:final_bound_proba} when $D=1$. Indeed, if $D = 1$, \eqref{eq:final_bound_proba} becomes 
\[ M \geq \frac{4L^2}{C} \log(1/\alpha), \]
where $C$ is the mean squared derivative and $L$ is the maximum derivative magnitude: 
\begin{equation*} %\label{eq:ex1Dlinear}
    C = \int (f'(x))^2 \rho(x) dx \leq L^2, 
\end{equation*}
while for linear objectives\footnote{We mention here linear objectives to illustrate the theoretical properties of our bound, but highlight that these objectives are not of interest/considered in this paper.} $C = L^2$ so that \eqref{eq:final_bound_proba} simplifies to $M \geq 4 \log(1/\alpha)$. 
\end{remark}

%\emcomm{will continue cleaning from here...}

%\subsection{Interpretation of the bound for arbitrary dimensions}

Before concluding this section, we make two additional remarks on the scalability of the bounds \eqref{eq:bound_proba_tau} and \eqref{eq:final_bound_proba} with the dimension $d_e$, and their invariance under scaling of the objective and rotation of the coordinate axes. 

\begin{remark}\label{remark:M} The apparent logarithmic dependency of the lower bound \eqref{eq:final_bound_proba} with $d_e$ may seem surprising, as $\hat \bC$ has by construction a rank strictly lower than $d_e$ if $M < d_e$; we would thus expect instead a linear dependency between $M$ and $d_e$. This paradox can be explained by the dependency of the constant $L$ in the dimension $d_e$. Recall that, by \Cref{thm:effective_equal_active}, $\bC$ has rank $d_e$, and note that its trace satisfies 
\[ \sum_{i = 1}^{d_e} \lambda_i = \mathrm{tr} \left(\int \nabla f(\bx) \nabla f(\bx)^T \rho(\bx) d\bx \right) = \int \mathrm{tr} \left(\nabla f(\bx) \nabla f(\bx)^T \right) \rho(\bx) d\bx \leq L^2,  \]
where the last inequality follows from  \Cref{ass:BoundedGrad}. This implies $L^2 \geq d_e \lambda_{d_e}$, so that \eqref{eq:final_bound_proba} becomes\footnote{The case $\alpha = 1$ is trivial; it amounts to requiring $\prob[\eqref{RP} \; \text{is successful}] \geq 0$.}
\begin{equation} \label{eq:paradox}
     M \geq \frac{4 \lambda_1 d_e}{\lambda_{d_e}} \log \left(\frac{d_e}{\alpha} \right) \geq 4 d_e \log\left(\frac{d_e}{\alpha}\right) > 4 d_e \log(d_e)
\end{equation} 
for $\alpha \in (0,1)$. If $d_e = 1$, and since $M$ is by definition an integer, \eqref{eq:paradox} becomes $M \geq 1 = d_e$. For $d_e \geq 2$, \eqref{eq:paradox} gives $M \geq 4 \log(2) d_e \simeq 2.77 d_e \geq d_e$ as expected. 
\end{remark}

%The last result of this section highlights invariances of the bounds \eqref{eq:bound_proba_tau} and \eqref{eq:final_bound_proba} under scaling of the objective and rotation of the coordinate axes. 

\begin{remark} Note that \eqref{eq:bound_proba_tau} and \eqref{eq:final_bound_proba} only depend on $f$ through the spectrum of the matrix $\bC$ and the upper bound $L$ on the gradient norm. In particular, these bounds are invariant under scaling of the objective and orthogonal transformation of the coordinates.
%Let us write as usual $\bC = \int \nabla f(\bx) \nabla f(\bx) \rho(\bx) d\bx$, with eigenvalues $\lambda_1 \geq \dots \geq \lambda_{d_e} \geq 0$, and let $L$ be an upper bound on the norm of the gradient of $f$. 
Indeed, for any $\beta \neq 0$ and any orthogonal matrix $\bQ \in \R^{D \times D}$, let $\bar f(\bx) := \beta f(\bQ \bx)$, let $\bar \bC := \int \nabla \bar f(\bx) \nabla \bar f(\bx) \rho(\bx) d\bx$, with eigenvalues\footnote{It is readily checked that, if $f$ has effective dimension $d_e$, then the same holds for $\bar f$, so that by \Cref{thm:effective_equal_active}, $\bar \bC$ has rank $d_e$. } $\bar \lambda_{1} \geq \dots \geq \bar \lambda_{d_e}$, and let $\bar L$ be an upper bound on the norm of the gradient of $\bar f$. As $\nabla \bar f(\bx) = \beta \bQ^T \nabla f(\bx)$, $\bar L = \beta L$ and the gradient matrix of $\bar f$ is $\bar \bC := \beta^2 \bQ^T \left( \int \nabla f(\bx) \nabla f(\bx) \rho(\bx) d\bx \right) \bQ$, which implies that $\bar \lambda_{i} = \beta^2 \lambda_i$ for all $i$. Lower bound \eqref{eq:final_bound_proba} becomes
\[ \bar M := \frac{4 \bar \lambda_{1} \bar L^2}{\bar{\lambda}_{d_e}^2} \log\left(\frac{d_e}{\alpha}\right) = \frac{4 \beta^4 \lambda_1 L^2}{\beta^4 \lambda_{d_e}^2 } \log\left(\frac{d_e}{\alpha}\right) = \frac{4  \lambda_1 L^2}{ \lambda_{d_e}^2} \log\left(\frac{d_e}{\alpha}\right). \]
The same reasoning shows the invariance of \eqref{eq:bound_proba_tau} under scaling of the objective and/or rotation of the coordinate axes. 
\end{remark}

\section{An active subspace method for global optimization} \label{sec: algo}
Let us now propose a generic algorithmic framework exploiting active subspaces for solving the global optimization problem (P). Our proposed framework, referred to as the Active Subspace Method for Global Optimization (ASM-GO), is presented in \Cref{alg:ASM}.  %\emcomm{Mention that the convergence will be preserved as soon as $M \geq M_0$ at all expect a finite number of iterations?}

\begin{algorithm}[H]
\begin{algorithmic}[1]
\State Initialize $\bp^{0} \in \bR^{D}$ 
\For{$k \geq 1$ until the termination criterion is satisfied}
\State Call Algorithm \ref{alg:est_C} with $M:=k$ to compute $\hat{\bC}$. 
%\State Draw $k$ samples independently from distribution $\rho$ and compute $\hat{\bC}$ as in \eqref{eq:hatC}
%%\State Compute $\hat{\bU}$ as described in \Cref{def:hat_U} and let $d^{(k)}$ be the number of columns of $\hat{\bU}$. 
\State Set $\bA^{k} = \hat{\bU}$, where $\hat{\bU}$ is any $D\times d_k$ matrix whose columns span $\range(\hat \bC)$.
\State Calculate $\by^{k}$ by solving approximately and possibly, probabilistically,
\begin{equation} \label{RP_k}
\tag{$\text{RP}^k$}\hspace*{-0.3cm}
    f_{\min}^{k}=\min_{\by \in \bR^{d_k}} f(\bA^{k} \by + \bp^{k-1}) 
\end{equation}
\State Construct $\bx^{k}=\bA^{k}\by^{k}+\bp^{k-1}$.
\State Choose $\bp^{k} \in \bR^{D}$ (deterministically or randomly) and let $k:=k+1$.
\EndFor
\end{algorithmic}
\caption{Active Subspace Method for Global Optimization (ASM-GO) of (P)}
\label{alg:ASM}
\end{algorithm}

At each major iteration $k$ of Algorithm \ref{alg:ASM}, a (new) sample of $k$ points is generated according to some distribution $\rho$, which is then used to estimate the active subspace using \Cref{alg:est_C}.  The reduced problem \eqref{RP} is solved in this ensuing subspace, and  the best point found is labeled as the next (major) iterate. Then a new sample of $k+1$ points is drawn, and the process is repeated until a suitable termination criterion is achieved - such as no significant change in the objective is obtained over major iterations, or the rank of $\hat\bC$ cannot be further increased; see Section 5 for more details on practical termination criteria. The parameter $\bp^k$ can be chosen for example as the best major iterate found so far, to ease the solution of \eqref{RP} and speed up that of (P); see also Section 5.

%\emcomm{Some pieces of the proofs should be sent to appendix as too close to former papers. I suggest to move the proofs of the two last lemmas (Lemma 4.5 and 4.6). We should also add text about intuition and interpretation.}

%\emcomm{We should call here \Cref{alg:est_C}. }
%\subsection{Convergence for ASM for functions with low effective dimensionality}\label{sec:convergence_of_ASM}

We next prove global convergence of ASM-GO based on the probability of success of the reduced problem derived in the previous section. Our proof follows a similar argument to the corresponding random embedding results in \cite{cartis2023bound}, but applied to a different context here, of active subspace learning; the intermediate results that are most similar to our approach in \cite{cartis2023bound} have been delegated to the appendix.

We first define the best point, $\bx_{\text{opt}}^{k}$, found up to the $k$th embedding as
\begin{align}
\label{x_opt}
	\bx_{\text{opt}}^{k} = \arg \min \{ f(\bx^{1}),f(\bx^{2}),...,f(\bx^{k}) \},
\end{align}
and we define $G_{\epsilon}$ the set of approximate global minimizers of \eqref{P}: 
\begin{equation}
\label{eq: G_epsilon}
G_{\epsilon} = \{ \bx \in \R^D : f(\bx) \leq f^* + \epsilon\}.
\end{equation}
 Note that, if for some $k$, the conditions a) and b) below are simultaneously satisfied, then $\bx^{k} \in G_{\epsilon}$:
\begin{enumerate}[label=\alph*)]
	\item The reduced problem~(\ref{RP_k}) is successful in the sense of \Cref{def:RPsuccessful}, namely,
	\begin{align}
	\label{e_l_f0}
	f_{\min}^{k} = f^{*}.
\end{align}
    \item The reduced problem~(\ref{RP_k}) is solved by a global solver to an accuracy $\epsilon$ in the objective function value, namely,
	\begin{align}
	\label{l_f0}
		f(\bA^{k}\by^{k}+\bp^{k-1}) \leq f_{\min}^{k} + \epsilon	
	\end{align}
	holds with a certain probability.
\end{enumerate}
We assume here that $\mvec{p}^k$ is a random variable; this covers deterministic choices of $\bp^k$ as a special case, by selecting a distribution supported on a singleton. 
Due to the stochastic definition of $\hat \bU$ (hence $\bA^{k}$) and of $\mvec{p}^k$, these conditions hold with a given probability. Let us therefore introduce two random variables that capture the conditions in (a) and (b) above,
\begin{align}
	R^k  &= \mathds{1}\{\text{\eqref{RP_k} is successful in the sense of \eqref{e_l_f0}}\}, \label{eq: Rk} \\ 
	S^k  &= \mathds{1}\{\text{\eqref{RP_k} is solved to accuracy $\epsilon$ in the sense of \eqref{l_f0}}\}, \label{eq: Sk}
\end{align}
where $\mathds{1}$ is the usual indicator function for an event. 

Let $\mathcal{F}^k = \sigma(\mtx{A}^1, \dots, \mtx{A}^k, \mvec{y}^1, \dots, \mvec{y}^k, \mvec{p}^0, \dots, \mvec{p}^k)$ be the $\sigma$-algebra generated by the random variables $\mtx{A}^1, \dots, \mtx{A}^k, \mvec{y}^1, \dots, \mvec{y}^k, \mvec{p}^0, \dots, \mvec{p}^k$ (a mathematical concept that represents the history of the algorithm as well as its randomness
until the $k$th embedding)\footnote{A similar setup regarding random iterates of probabilistic models can be found in \cite{Bandeira2014, Cartis2018} in the context of local optimization.}, with  $\mathcal{F}^0 = \sigma(\mvec{p}^0)$. 
We also construct an `intermediate' $\sigma$-algebra, namely,
$$\mathcal{F}^{k-1/2} = \sigma(\mtx{A}^1, \dots, \mtx{A}^{k-1}, \mtx{A}^{k}, \mvec{y}^1, \dots, \mvec{y}^{k-1}, \mvec{p}^0, \dots, \mvec{p}^{k-1}), $$
with  $\mathcal{F}^{1/2} = \sigma(\mvec{p}^0, \mtx{A}^{1})$.
Note that $\mvec{x}^k$, $R^k$ and $S^k$ are $\mathcal{F}^{k}$-measurable\footnote{It would be possible to restrict the definition of the $\sigma$-algebra $\mathcal{F}^k$
	so that it contains strictly the randomness of the embeddings $\mtx{A}^i$ and $\mvec{p}^i$ for $i\leq k$; then we would need to assume that $\mvec{y}^k$  
	is $\mathcal{F}^k$-measurable, 
	which would imply that $R^k$, $S^k$ and $\mvec{x}^k$ are also $\mathcal{F}^k$-measurable. Similar comments apply to the definition of 
	$\mathcal{F}^{k-1/2}$.}, and 
$R^k$ is also $\mathcal{F}^{k-1/2}$-measurable;
thus they are well-defined random variables.

%\emcomm{As this is really copy-paste, could we remove some of the footnotes to shorten it and refer to past paper instead? I'm especially thinking about moving footnote 8 back into the text, removing footnote 9, and shorten Remark 4.3. What do you think? I HAVE ADDED footnote into text and kept the rest}

\begin{remark}
    The random variables $\mtx{A}^1, \dots, \mtx{A}^k$, $\mvec{y}^1, \dots, \mvec{y}^k$, $\mvec{x}^1, \dots, \mvec{x}^k$, $\mvec{p}^0$, $\mvec{p}^1$, $\dots$, $\mvec{p}^k$, $R^1$, $\dots$, $R^k$, $S^1, \dots, S^{k}$ are  $\mathcal{F}^{k}$-measurable since $\mathcal{F}^0 \subseteq \mathcal{F}^1 \subseteq \cdots \subseteq \mathcal{F}^{k}$. Also, $\mtx{A}^1, \dots, \mtx{A}^k$, $\mvec{y}^1, \dots, \mvec{y}^{k-1}$, $\mvec{x}^1, \dots, \mvec{x}^{k-1}$, $\mvec{p}^0, \mvec{p}^1, \dots, \mvec{p}^{k-1}$, $R^1$, $\dots$, $R^k$, $S^1, \dots, S^{k-1}$ are  $\mathcal{F}^{k-1/2}$-measurable since $\mathcal{F}^0 \subseteq \mathcal{F}^{1/2} \subseteq \mathcal{F}^1 \subseteq \cdots \subseteq \mathcal{F}^{k-1} \subseteq \mathcal{F}^{k-1/2}$.
\end{remark}

A weak assumption is given next, that is satisfied by reasonable techniques for the subproblems; namely, the  reduced problem \eqref{RP_k} needs to be solved to required accuracy with some positive probability.

\begin{assumption}\label{assump: prob_of_I_R>rho}
	There exists $\gamma \in (0,1]$ such that, for all $k \geq 1$,\footnote{The equality in the displayed equation follows from $\mathbb{E}[S^k | \mathcal{F}^{k-1}] = 1 \cdot \prob[ S^k = 1 | \mathcal{F}^{k-1}] + 0 \cdot \prob[ S^k = 0 | \mathcal{F}^{k-1} ]$.}
%	$$ \mathbb{E}[S^k | \mathcal{F}^{k-1}]  = 1 \cdot \prob[ S^k = 1 | \mathcal{F}^{k-1}] + 0 \cdot \prob[ S^k = 0 | \mathcal{F}^{k-1} ]\geq \rho, $$ 
\begin{equation*}
	  \prob[ S^k = 1 | \mathcal{F}^{k-1/2}]=\mathbb{E}[S^k | \mathcal{F}^{k-1/2}]  \geq \gamma, 
 \end{equation*}
	i.e., with (conditional) probability at least $\gamma > 0$, the solution $\mvec{y}^k$ of \eqref{RP_k} 
	satisfies \eqref{l_f0}.
\end{assumption}

\begin{remark}
 If a deterministic (global optimization) algorithm is used to solve \eqref{RP_k}, then $S^k$ is  always $\mathcal{F}_k^{k-1/2}$-measurable and \Cref{assump: prob_of_I_R>rho} is equivalent to $S^k\geq \gamma$. Since $S^k$ is an indicator function, this further implies that $S^k\equiv 1$, provided a sufficiently large computational budget is available.
\end{remark}

%\emcomm{I changed the next corollary and split into two - part b) in former version didn't have any low-dimensionality assumptions...}

\begin{corollary} \label{corr:success_solver}
Let Assumption  \ref{assump: prob_of_I_R>rho} 
hold. Then, 
\begin{equation*}\label{ineq: exp_IS_IR>tau*rho}
\mathbb{E}[R^k S^k | \mathcal{F}^{k-1/2}]  \geq  \gamma R^k, \quad {\rm for}\quad k\geq 1.
\end{equation*}
\end{corollary}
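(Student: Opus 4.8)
The plan is to exploit the measurability structure already established: the key observation is that $R^k$ is $\mathcal{F}^{k-1/2}$-measurable, as stated explicitly in the excerpt just before Assumption \ref{assump: prob_of_I_R>rho}. This is precisely because $R^k$ depends only on whether the reduced problem \eqref{RP_k} is \emph{successful} in the sense of \eqref{e_l_f0}, i.e.\ on whether $f_{\min}^k = f^*$, which in turn depends only on the embedding matrix $\bA^k$ and the shift $\bp^{k-1}$ — both of which are $\mathcal{F}^{k-1/2}$-measurable. It does not depend on the solver's output $\by^k$. Therefore $R^k$ can be pulled out of the conditional expectation as a constant.

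First I would write $\mathbb{E}[R^k S^k \mid \mathcal{F}^{k-1/2}] = R^k \, \mathbb{E}[S^k \mid \mathcal{F}^{k-1/2}]$, justified by the $\mathcal{F}^{k-1/2}$-measurability of $R^k$ together with the standard ``taking out what is known'' property of conditional expectation. Next I would invoke Assumption \ref{assump: prob_of_I_R>rho}, which gives $\mathbb{E}[S^k \mid \mathcal{F}^{k-1/2}] \geq \gamma$. Finally, since $R^k \in \{0,1\}$ is nonnegative, multiplying the inequality $\mathbb{E}[S^k \mid \mathcal{F}^{k-1/2}] \geq \gamma$ by $R^k$ preserves the direction of the inequality, yielding
\[
\mathbb{E}[R^k S^k \mid \mathcal{F}^{k-1/2}] = R^k \, \mathbb{E}[S^k \mid \mathcal{F}^{k-1/2}] \geq \gamma R^k,
\]
which is exactly the claim.

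The main (and really only) subtle point is the justification that $R^k$ is $\mathcal{F}^{k-1/2}$-measurable, so that the factorization of the conditional expectation is legitimate. This is not a calculation but a measurability bookkeeping fact, and the excerpt has already asserted it in the sentence stating that ``$R^k$ is also $\mathcal{F}^{k-1/2}$-measurable.'' Hence I would simply cite that established fact rather than re-derive it. The nonnegativity of $R^k$ (needed to preserve the inequality when multiplying) is immediate since $R^k$ is an indicator function. I expect this corollary to be essentially a one-line consequence of the setup, with no genuine obstacle — its role is purely to package the measurability of $R^k$ and Assumption \ref{assump: prob_of_I_R>rho} into a form convenient for the subsequent global convergence argument.
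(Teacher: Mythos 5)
Your proposal is correct and follows exactly the paper's argument: pull $R^k$ out of the conditional expectation using its $\mathcal{F}^{k-1/2}$-measurability (the ``taking out what is known'' property, cited in the paper as Theorem 4.1.14 of Durrett), then apply Assumption \ref{assump: prob_of_I_R>rho} and the nonnegativity of the indicator $R^k$. No differences worth noting.
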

\begin{proof}
Under \Cref{assump: prob_of_I_R>rho}, $\mathbb{E}[R^k S^k | \mathcal{F}^{k-1/2}] = R^k  \mathbb{E}[ S^k | \mathcal{F}^{k-1/2}] \geq  \gamma R^k$,
where the equality follows from the fact that $R^k$ is $\mathcal{F}^{k-1/2}$-measurable (see \cite[Theorem 4.1.14]{Durrett2019}).
\end{proof}

The results of \Cref{sec: estim_success} provide  a lower bound on the (conditional) probability of the reduced problem \eqref{RP_k} to be successful, which leads to the next corollary. 
\begin{corollary} \label{corr: lowerbdRPK}
Let Assumptions \ref{ass:f_low_base}, \ref{ass:C1} and \ref{ass:BoundedGrad} hold, let $\rho$ be a density with support in the whole of $\R^D$, and let $\lambda_1$ and $\lambda_{d_e}$ be defined in \Cref{thm:effective_equal_active}. If $k \geq M_0$, where
\begin{equation}\label{M_0}
    M_0 := \left\lceil \frac{4 \lambda_1 L^2}{\lambda_{d_e}^2} \log(d_e)  \right\rceil + 1
\end{equation}
then 
\begin{equation*}%\label{ineq: cond_exp>tau}
\mathbb{E}[R^k | \mathcal{F}^{k-1}]  \geq \tau, \qquad \text{with} \qquad  \tau := 1- \exp\left( -\frac{\lambda_{d_e}^2}{4\lambda_1 L^2} \right).
\end{equation*}
\end{corollary}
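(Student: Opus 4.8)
The plan is to derive this corollary directly from Theorem \ref{thm:estimate_on_M}, specifically from the success probability bound \eqref{eq:final_bound_proba}, together with the key observation from Remark \ref{remark:M} that $L^2 \geq d_e \lambda_{d_e}$ (and hence $\lambda_{d_e}/L^2 \leq 1/d_e$). The statement in question lower-bounds the conditional expectation $\mathbb{E}[R^k \mid \mathcal{F}^{k-1}]$, which, since $R^k$ is an indicator, equals the conditional probability that \eqref{RP_k} is successful. The first step is therefore to argue that this conditional probability is in fact the unconditional probability from Theorem \ref{thm:estimate_on_M}: the event encoded by $R^k$ depends only on the fresh sample of $k$ gradients drawn at iteration $k$ (used to form $\hat\bC$ and hence $\bA^k$), which is independent of the history $\mathcal{F}^{k-1}$. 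Thus $\mathbb{E}[R^k \mid \mathcal{F}^{k-1}] = \prob[\eqref{RP_k}\text{ is successful}]$, and it remains to bound the latter below by $\tau$.

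The core computation is to verify that $k \geq M_0$, with $M_0$ as in \eqref{M_0}, forces the success probability to be at least $\tau = 1 - \exp(-\lambda_{d_e}^2/(4\lambda_1 L^2))$. I would plug $\alpha$ into the Bernstein-type bound \eqref{eq:bernstein} with $\tau = 0$ and $k = d_e$, which gives
\begin{equation*}
\prob[\hat\lambda_{d_e} = 0] \leq d_e \exp\left( -\frac{M \lambda_{d_e}^2}{4\lambda_1 L^2}\right),
\end{equation*}
so that the success probability is at least $1 - d_e \exp(-M \lambda_{d_e}^2/(4\lambda_1 L^2))$. Substituting the lower bound $M \geq M_0 \geq \frac{4\lambda_1 L^2}{\lambda_{d_e}^2}\log(d_e) + 1$ into the exponent and factoring out the $\log(d_e)$ term should cancel the leading $d_e$ factor: the $\frac{4\lambda_1 L^2}{\lambda_{d_e}^2}\log(d_e)$ part of $M_0$ produces a factor $\exp(-\log d_e) = 1/d_e$ which exactly offsets the $d_e$ prefactor, while the residual ``$+1$'' in $M_0$ contributes the surviving factor $\exp(-\lambda_{d_e}^2/(4\lambda_1 L^2))$. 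This yields $\prob[\text{success}] \geq 1 - \exp(-\lambda_{d_e}^2/(4\lambda_1 L^2)) = \tau$, as required.

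I anticipate that the main subtlety is \emph{not} the arithmetic but the measurability/independence bookkeeping in the first step — namely justifying carefully that conditioning on $\mathcal{F}^{k-1}$ (the history through the $(k-1)$th embedding and $\bp^{k-1}$) does not affect the distribution of $R^k$, since the sample forming $\bA^k$ is drawn afresh and independently at iteration $k$. One must also confirm that $M_0$ is well-defined and that $\tau \in (0,1]$, which is immediate since $\lambda_{d_e}^2/(4\lambda_1 L^2) > 0$ whenever $\lambda_{d_e} > 0$ (guaranteed by Theorem \ref{thm:effective_equal_active}, as $\bC$ has rank $d_e$). A minor point to handle cleanly is the use of $\log(d_e)$ rather than $\log(d_e/\alpha)$ in $M_0$: here $\tau$ is defined so as to absorb exactly the $\alpha$ dependence, so rather than fixing $\alpha$ in advance I would treat \eqref{eq:bernstein} as the primitive inequality and read off the resulting $\tau$ directly, which is the cleanest route and avoids circular reasoning about the choice of $\alpha$.
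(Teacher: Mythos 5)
Your proposal is correct and takes essentially the same route as the paper, which simply fixes $\alpha = e^{-\lambda_{d_e}^2/(4\lambda_1 L^2)}$ in \Cref{thm:estimate_on_M} and verifies that $k \geq M_0$ implies $k \geq \frac{4\lambda_1 L^2}{\lambda_{d_e}^2}\log(d_e/\alpha)$ --- algebraically identical to your substitution of $M_0$ into the Bernstein exponent. Two minor notes: the inequality $L^2 \geq d_e\lambda_{d_e}$ from \Cref{remark:M} is not actually needed anywhere in your computation, and your explicit justification that conditioning on $\mathcal{F}^{k-1}$ is harmless (the fresh sample is independent of the history, and success depends only on $\rank(\hat{\bC})$, not on $\bp^{k-1}$) addresses a point the paper leaves implicit.
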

\begin{proof}
Note that \begin{align*}
    k \geq \left\lceil \frac{4 \lambda_1 L^2}{\lambda_{d_e}^2} \log(d_e)  \right\rceil + 1  \geq \frac{4 \lambda_1 L^2}{\lambda_{d_e}^2} \log(d_e) + 1 = \frac{4 \lambda_1 L^2}{\lambda_{d_e}^2} \log\left( \frac{d_e}{ e^{-\lambda_{d_e}^2/(4\lambda_1L^2)} }\right).
\end{align*}
We then apply \Cref{thm:estimate_on_M} with $M=k$, $\alpha = e^{-\lambda_{d_e}^2/(4\lambda_1L^2)}$ to deduce that $\prob[R^k = 1 | \mathcal{F}^{k-1}] \geq 1 - e^{-\lambda_{d_e}^2/(4\lambda_1L^2)}$. Then, in terms of conditional expectation, we have
$\mathbb{E}[R^k | \mathcal{F}^{k-1}] = 1 \cdot \prob[ R^k = 1 | \mathcal{F}^{k-1}] + 0 \cdot \prob[ R^k = 0 | \mathcal{F}^{k-1} ] \geq \tau := 1 - e^{-\lambda_{d_e}^2/(4\lambda_1L^2)}$.
\end{proof}

We will use these two corollaries to prove global convergence of \Cref{alg:ASM}. Let us first state the following intermediary result, whose proof can be found in the Appendix.

%\emcomm{Shouldn't we permute the two next lemmas?}
\begin{lemma}\label{lemma: lim_of_prob_is_1}
Let Assumptions \ref{ass:f_low_base}, \ref{ass:C1}, \ref{ass:BoundedGrad} and \ref{assump: prob_of_I_R>rho} be satisfied, and let $\rho$ be a density with support in the whole of $\R^D$. Then, for $K\geq M_0$, where $M_0$ is defined in \eqref{M_0}, we have
	$$ \prob\Big[ \bigcup_{k=1}^K \left\{ \{R^k = 1\} \cap \{ S^k = 1 \} \right\} \Big] \geq 1 - (1- \tau \gamma)^{K-M_0 + 1}, $$
 where $\tau$ is defined in \Cref{corr: lowerbdRPK}.
\end{lemma}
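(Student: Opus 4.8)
The plan is to bound the probability of the complementary event, namely that \emph{no} iteration $k \in \{1, \dots, K\}$ simultaneously achieves a successful reduced problem and an accurate subproblem solve. The key observation is that \Cref{corr: lowerbdRPK} guarantees $\mathbb{E}[R^k \mid \mathcal{F}^{k-1}] \geq \tau$ for every $k \geq M_0$, while \Cref{corr:success_solver} controls the conditional expectation of the product $R^k S^k$. Since the restriction $k \geq M_0$ is what makes the success probability bounded below by $\tau$, I expect the union in the displayed bound to be effectively driven by the iterations $M_0, M_0+1, \dots, K$, which explains the exponent $K - M_0 + 1$.

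First I would define, for each $k$, the indicator of failure at step $k$, i.e. the event that $\{R^k = 1\} \cap \{S^k = 1\}$ does \emph{not} occur, and express the probability of the complement of the union as the probability that this failure occurs at every step $k$ from $M_0$ to $K$. Writing this as a product of conditional probabilities along the filtration, I would use the tower property of conditional expectation: conditioning successively on $\mathcal{F}^{k-1/2}$ and then $\mathcal{F}^{k-1}$, I would show that the conditional probability of failure at step $k$ (given the history $\mathcal{F}^{k-1}$) is at most $1 - \tau\gamma$. Concretely, for $k \geq M_0$,
\begin{equation*}
    \mathbb{E}[R^k S^k \mid \mathcal{F}^{k-1}] = \mathbb{E}\big[\, \mathbb{E}[R^k S^k \mid \mathcal{F}^{k-1/2}] \;\big|\; \mathcal{F}^{k-1}\big] \geq \gamma\, \mathbb{E}[R^k \mid \mathcal{F}^{k-1}] \geq \gamma \tau,
\end{equation*}
where the first inequality is \Cref{corr:success_solver} and the second is \Cref{corr: lowerbdRPK}. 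Since $R^k S^k$ is itself an indicator, this gives $\prob[\{R^k = 1\} \cap \{S^k = 1\} \mid \mathcal{F}^{k-1}] \geq \tau\gamma$, hence the conditional probability of failure at step $k$ is at most $1 - \tau\gamma$.

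I would then iterate this bound: writing the probability that failure occurs at every step $k = M_0, \dots, K$ as a nested conditional expectation and repeatedly applying the single-step bound $1 - \tau\gamma$ together with the tower property, I would obtain that this probability is at most $(1 - \tau\gamma)^{K - M_0 + 1}$. Since the intersection over $k = M_0, \dots, K$ contains the intersection over $k = 1, \dots, K$, the probability of the full complement is bounded by the same quantity, and taking complements yields the claimed lower bound on the probability of the union.

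The main obstacle I anticipate is the careful handling of the conditioning and measurability when peeling off one factor at a time. The subtlety is that $R^k$ is $\mathcal{F}^{k-1/2}$-measurable but $S^k$ is not, which is precisely why the intermediate $\sigma$-algebra $\mathcal{F}^{k-1/2}$ was introduced; one must apply \Cref{corr:success_solver} at the $\mathcal{F}^{k-1/2}$ level \emph{before} invoking the success bound at the $\mathcal{F}^{k-1}$ level. Equally delicate is justifying that, when multiplying the indicators across steps, the earlier failure indicators (for steps $< k$) are $\mathcal{F}^{k-1}$-measurable so that they can be pulled out of the conditional expectation, allowing the recursion to close cleanly — this is where I would lean on the nesting $\mathcal{F}^0 \subseteq \mathcal{F}^{1/2} \subseteq \mathcal{F}^1 \subseteq \cdots$ and the measurability remarks stated in the excerpt.
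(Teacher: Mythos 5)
Your proposal is correct and follows essentially the same route as the paper: peel off one iteration at a time using the tower property, conditioning first on $\mathcal{F}^{k-1/2}$ to invoke \Cref{corr:success_solver} and then on $\mathcal{F}^{k-1}$ to invoke \Cref{corr: lowerbdRPK}, yielding the per-step factor $1-\tau\gamma$ for $k \geq M_0$. Your packaging of the two conditionings into the single bound $\mathbb{E}[R^k S^k \mid \mathcal{F}^{k-1}] \geq \tau\gamma$ and your discarding of the iterations $k < M_0$ via event inclusion correspond exactly to the paper's bounding of the leftover product $\mathbb{E}\big[\prod_{k=1}^{M_0-1}(1-R^kS^k)\big]$ by $1$.
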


In the next lemma, whose proof can be found in the Appendix, we show that if \eqref{RP_k} is successful and is solved to accuracy $\epsilon$ in objective value, then the solution $\mvec{x}^k$ must be inside $G_{\epsilon}$; thus proving our intuitive 
statements (a) and (b) at the start of this section. 

\begin{lemma}\label{lemma: if WcapG then_x in G_epsilon}
	Let Assumptions \ref{ass:f_low_base}, \ref{ass:C1}, \ref{ass:BoundedGrad} and \ref{assump: prob_of_I_R>rho} be satisfied, and let $\rho$ be a density  with support in the whole of $\R^D$. Then
		$$
		\{R^k = 1\}\cap \{S^k = 1\} \subseteq \{\mvec{x}^{k}  \in G_{\epsilon}\}.
		$$
		%If  $\{R^k = 1\}$ and $\{S^k = 1\}$, then $\mvec{x}^{k}  \in G_{\epsilon}$ , namely, $f(\mvec{x}^{k}) \leq f^* + \epsilon$ with probability one? or for all realizations of $x^k$??
\end{lemma}

We are now ready to prove the main result of this section.
\begin{theorem}[Global convergence]\label{thm: glconv}
	Let Assumptions \ref{ass:f_low_base}, \ref{ass:C1}, \ref{ass:BoundedGrad} and \ref{assump: prob_of_I_R>rho} be satisfied, and let $\rho$ be a density with support in the whole of $\R^D$ . Then
	$$\lim_{k\rightarrow \infty} \prob[\mvec{x}^k_{opt} \in G_{\epsilon}]=\lim_{k\rightarrow \infty} \prob[f(\mvec{x}^k_{opt}) \leq f^* + \epsilon] = 1$$
%	and
%		$$\lim_{k\rightarrow \infty} \prob[f(\mvec{x}^k_{opt}) \leq f^* + \epsilon] = 1,$$
	where $\mvec{x}^k_{opt}$ and $G_{\epsilon}$ are defined in \eqref{x_opt} and   \eqref{eq: G_epsilon}, respectively.
	
	Furthermore, for any $\xi \in (0,1)$, 
	\begin{equation*}\label{eq:prob[x_opt^k_in_G_eps]>alpha}
\text{$\prob[ \mvec{x}^k_{opt} \in G_{\epsilon} ]= \prob[f(\mvec{x}^k_{opt}) \leq f^* + \epsilon]\geq \xi$ for all $k \geq K_{\xi}$,}
\end{equation*}
where $K_\xi:= \displaystyle\ceil*{\frac{|\log(1-\xi)|}{\tau \gamma}} + M_0 -1$ with $\tau$ and $M_0$ defined in \Cref{corr: lowerbdRPK}.
\end{theorem}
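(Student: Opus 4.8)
The plan is to combine the two auxiliary lemmas already provided with a monotonicity argument. The key observation is that the event $\{\mvec{x}^k_{\text{opt}} \in G_\epsilon\}$ is equivalent to the event that \emph{at least one} of the first $k$ iterates has landed in $G_\epsilon$, because $\bx^k_{\text{opt}}$ is defined as the best point found so far in \eqref{x_opt}. Formally, $\{\mvec{x}^k_{\text{opt}} \in G_\epsilon\} = \bigcup_{j=1}^k \{\mvec{x}^j \in G_\epsilon\}$, since $f(\bx^k_{\text{opt}}) = \min_{j \leq k} f(\bx^j) \leq f^* + \epsilon$ holds if and only if some individual iterate satisfies the bound.

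First I would invoke \Cref{lemma: if WcapG then_x in G_epsilon} to obtain, for each $j$, the inclusion $\{R^j = 1\} \cap \{S^j = 1\} \subseteq \{\mvec{x}^j \in G_\epsilon\}$. Taking the union over $j = 1, \dots, K$ and using the set-inclusion just described gives
\begin{equation*}
\bigcup_{k=1}^K \left( \{R^k = 1\} \cap \{S^k = 1\} \right) \subseteq \bigcup_{k=1}^K \{\mvec{x}^k \in G_\epsilon\} = \{\mvec{x}^K_{\text{opt}} \in G_\epsilon\}.
\end{equation*}
Monotonicity of probability then yields $\prob[\mvec{x}^K_{\text{opt}} \in G_\epsilon] \geq \prob\big[\bigcup_{k=1}^K (\{R^k=1\} \cap \{S^k=1\})\big]$, and \Cref{lemma: lim_of_prob_is_1} lower-bounds this last quantity by $1 - (1-\tau\gamma)^{K - M_0 + 1}$ whenever $K \geq M_0$.

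With this chain of inequalities in place, both conclusions follow by elementary estimates. For the limit, since $\tau \in (0,1]$ (by \Cref{corr: lowerbdRPK}) and $\gamma \in (0,1]$ (by \Cref{assump: prob_of_I_R>rho}), we have $1-\tau\gamma \in [0,1)$, so $(1-\tau\gamma)^{K-M_0+1} \to 0$ as $K \to \infty$, forcing $\prob[\mvec{x}^K_{\text{opt}} \in G_\epsilon] \to 1$ (the probability is also bounded above by $1$, giving convergence). For the explicit rate, I would set $1 - (1-\tau\gamma)^{K-M_0+1} \geq \xi$ and solve for $K$: this rearranges to $(1-\tau\gamma)^{K-M_0+1} \leq 1-\xi$, and taking logarithms (noting $\log(1-\tau\gamma) \leq -\tau\gamma$ via the standard inequality $\log(1-t) \leq -t$) gives the stated threshold $K_\xi = \lceil |\log(1-\xi)| / (\tau\gamma) \rceil + M_0 - 1$.

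The main obstacle, if any, is not conceptual but bookkeeping: one must verify the indexing in \Cref{lemma: lim_of_prob_is_1} aligns correctly with the ceiling in $K_\xi$, and confirm that the logarithm bound $\log(1-\tau\gamma) \leq -\tau\gamma$ is applied in the correct direction so that the ceiling produces a valid (integer, sufficiently large) threshold. I would also briefly note that the two equalities in the theorem statement — between $\prob[\mvec{x}^k_{\text{opt}} \in G_\epsilon]$ and $\prob[f(\mvec{x}^k_{\text{opt}}) \leq f^* + \epsilon]$ — are immediate from the definition of $G_\epsilon$ in \eqref{eq: G_epsilon}, requiring no further work.
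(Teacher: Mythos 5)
Your proposal is correct and follows essentially the same route as the paper: the inclusion from Lemma \ref{lemma: if WcapG then_x in G_epsilon}, the identification of $\{\mvec{x}^K_{\text{opt}} \in G_\epsilon\}$ with the union over iterates via monotonicity of $f(\mvec{x}^k_{\text{opt}})$, the lower bound from Lemma \ref{lemma: lim_of_prob_is_1}, and then the limit and the rearrangement for $K_\xi$. Your explicit use of $\log(1-t)\le -t$ to justify the ceiling expression is exactly the step the paper leaves implicit when it asserts $K_\xi \ge \log(1-\xi)/\log(1-\tau\gamma)+M_0-1$.
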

\begin{proof}
%	Note that $\tilde{\mvec{x}}^k \in G_{\epsilon}$ implies $\tilde{\mvec{x}}_{opt}^k \in G_{\epsilon}$; this fact together with Lemma \ref{lemma: if WcapG then_x in G_epsilon} yield
Lemma \ref{lemma: if WcapG then_x in G_epsilon} and the definition of $\mvec{x}^k_{opt}$ in \eqref{x_opt} provide 
	$$ \{ R^k = 1 \} \cap \{ S^k = 1 \} \subseteq \{ \mvec{x}^k \in G_{\epsilon} \} \subseteq \{ \mvec{x}_{opt}^k \in G_{\epsilon} \} $$
	for $k = 1, 2,\dots, K$ and for any integer $K\geq 1$. Hence, 
	\begin{equation}\label{rel: cup_I is in cup_X}
	\bigcup_{k=1}^K \{ R^k = 1 \} \cap \{ S^k = 1 \} \subseteq \bigcup_{k=1}^K \{ \mvec{x}^k_{opt} \in G_{\epsilon} \}.
	\end{equation}
	Note that  the sequence $\{ f(\mvec{x}^1_{opt}), f(\mvec{x}^2_{opt}), \dots, f(\mvec{x}^K_{opt})\}$ is monotonically decreasing. Therefore, if $\mvec{x}^k_{opt} \in G_{\epsilon}$ for some $k \leq K$ then $\mvec{x}^i_{opt} \in G_{\epsilon}$ for all $i = k, \dots, K$; and so the sequence $(\{ \mvec{x}^k_{opt} \in G_{\epsilon} \})_{k = 1}^K$ is an increasing sequence of events. Hence,
	\begin{equation}\label{eq:cup_x_opt_in_G=x_opt_in_G}
	    \bigcup_{k=1}^K \{ \mvec{x}^k_{opt} \in G_{\epsilon} \} = \{ \mvec{x}^K_{opt} \in G_{\epsilon} \}.
	\end{equation}
	From \eqref{eq:cup_x_opt_in_G=x_opt_in_G} and \eqref{rel: cup_I is in cup_X}, we have for all $K\geq 1$,
	\begin{equation}\label{eq:prob[x_opt_in_G_eps>1-(1-tr)^K]}
	\prob[\{ \mvec{x}^K_{opt} \in G_{\epsilon} \}]  \geq \prob\Big[\bigcup_{k=1}^K \{ R^k = 1 \} \cap \{ S^k = 1 \} \Big]  \geq 1 - (1- \tau \gamma)^{K-M_0+1},
	\end{equation}
		where the second inequality follows from \Cref{lemma: lim_of_prob_is_1}.
Passing to the limit in $K$ in \eqref{eq:prob[x_opt_in_G_eps>1-(1-tr)^K]}, we deduce
$1 \geq \lim_{K \rightarrow \infty} \prob[\{ \mvec{x}^K_{opt} \in G_{\epsilon} \}] \geq \lim_{K \rightarrow \infty} \left[1 - (1- \tau \gamma)^{K-M_0+1}\right] = 1$,
as required.
 Note that if 
    \begin{equation} \label{eq:1-(1-tau rho)^K>alpha}
        1-(1-\tau \gamma)^{k-M_0+1} \geq \xi
    \end{equation}
    then \eqref{eq:prob[x_opt_in_G_eps>1-(1-tr)^K]} implies $\prob[ \mvec{x}^k_{opt} \in G_{\epsilon} ] \geq \xi$. Since \eqref{eq:1-(1-tau rho)^K>alpha} is equivalent to
    $ k\geq \displaystyle\frac{\log(1-\xi)}{\log(1-\tau \gamma)}+M_0-1 $, 
     \eqref{eq:1-(1-tau rho)^K>alpha} holds for all $k\geq K_\xi$ since 
    $K_\xi \geq \displaystyle\frac{\log(1-\xi)}{\log(1-\tau \gamma)} + M_0 -1$.
\end{proof}

\section{Numerical experiments} \label{sec:numerics}
%\xlcomm{If there is any requirements for the style, font size or etc. for the figures, please let me know I will make the plots consistent to it.}

\subsection{Experimental setup}

We propose two variants of our ASM-GO  framework (Algorithm \ref{alg:ASM}) that we implement and test numerically (the code is available on Github\footnote{See \url{https://github.com/aotemissov/Global_Optimization_with_Random_Embeddings
}}). Algorithm \ref{A-ASM} (A-ASM) is a practical variant that aims to save on the number of gradient samples and basis (re-)computations.  Instead of completely re-sampling the gradient values that give the estimate matrix $\hat{\bC}$ and re-calculating a basis $\hat{\bU}$ at every iteration, as in  Algorithm \ref{alg:ASM}, A-ASM maintains a basis of the estimated active subspace, samples one new gradient per iteration and uses it to augment the basis by means of  Gram-Schmidt orthogonalization\footnote{Here, for simplicity, we use the classical form of this procedure; more sophisticated variants of (online) Gram-Schmidt orthogonalization could be considered.}. 
%\emcomm{Regarding modified Gram Schmidt, I suggest not to mention it. Implementations I could find require knowing all samples from the start, which is less straighforward in our case... (need to choose how many samples to take etc)}  

\begin{algorithm}[H]
\begin{algorithmic}[1]
\State Initialize $\bp^{0} \in \bR^{D}$. 
\State Sample a point $ \bx_S^{1} $  from the probability density function $\rho$, set 
 $\ba^{1}:=\nabla f(\bx_S^{1})/\|\nabla f(\bx_S^{1})\|_2$, 
  $\bA^{1}:=\ba^{1}$ and $d = 1$.
\For{$k \geq 1$ until the termination criterion is satisfied}
\If{$k > 1$}
\State Sample a new point $\bx_S^{k}$ from the probability density function $\rho$, and set $\ba^{k}:=\nabla f(\bx_S^{k})$.
\State Let $\ba^{k} = \ba^k - \text{proj}_{\ba^{1}}(\ba^k) - \cdots - \text{proj}_{\ba^{d}}(\ba^k),$
where $\text{proj}_{\bv}(\bu) = \frac{<\bv,\bu>}{<\bv,\bv>}\bv$.
\State If $\| \ba^{k}\|_2 \geq 10^{-6}$, set $\ba^{k} := \ba^{k}/\|\ba^{k}\|_2$, $\bA^{k} := [\bA^{k-1} \ \ba^{k}]$ and $d := d+1$.
\EndIf
\State Calculate $\by^{k}$ by solving approximately,
and possibly, with a given probability,
\begin{equation*}
%\tag{RP$^{k}$}
f_{\min}^{k}=\min_{\by \in \bR^{d}} f(\bA^{k} \by + \bp^{k-1})
\end{equation*}
\State Construct $\bx^{k}=\bA^{k}\by^{k}+\bp^{k-1}$.
\State Choose $\bp^{k} \in \bR^{D}$ (deterministically or randomly) and let $k:=k+1$.
\EndFor
\end{algorithmic}
\caption{Adaptive-ASM (A-ASM) applied to~(\ref{P})}
\label{A-ASM}
\end{algorithm}

Algorithm \ref{ASM-1} (ASM-1)  generates only one  estimate of the active subspace of (P), from a user-chosen, hopefully sufficiently large, number $M$ of gradient samples, thus solving only one reduced subproblem. This variant is suitable when the effective dimension $d_e$ is known a priori, which then guides the choice of $M$ (following our theoretical developments in Section \ref{sec: estim_success}), saving on the number of subproblems that need solving.  We note that the basis $\bA$ in Algorithm \ref{ASM-1} is the same as  $\hat{\bU}$ in our earlier notation in say, Algorithm \ref{alg:ASM}.
\begin{algorithm}[H]
\begin{algorithmic}[1]
\State Initialize $\bp \in \bR^{D}$ and $M$, a number of sample gradients.
\State Apply Algorithm~\ref{alg:est_C} with $M$ samples to compute  $\hat{\bC}$.
\State Compute a basis $\bA$ of the range of $\hat{\bC}$, and let $d$ be its dimension.
\State Calculate $\by$   by solving approximately,
and possibly, with a given probability,
\begin{equation*}
%\tag{RP}
\begin{aligned} 
	& \min_{\by \in \bR^{d}} f(\bA \by + \bp).
\end{aligned}
%\label{RP}
\end{equation*}
\State Construct the solution estimate, $\bx=\bA\by+\bp$.
\end{algorithmic}
\caption{ASM-1 applied to~(\ref{P})}
\label{ASM-1}
\end{algorithm}

\paragraph{Choice of global solver for the subproblems.} To solve the reduced subproblems in our ASM variants (and in other methods we test), we employ the KNITRO solver \cite{Byrd2006}, a package for solving large-scale nonlinear local optimization problems. To turn it into a global solver, we activate its multi-start feature (the solver is then referred to as mKNITRO). The solver has the following options/specifications:
(1) Computational cost measure: function evaluations, CPU time in seconds; (2)  Budget to solve a $d$-dimensional problem: $\min(200,10d)$ starting points; (3) Termination criterion: default options. (4) Additional options: \texttt{ms\_enable = 1}, \texttt{ms\_maxbndrange = 2}.
%\begin{itemize}
%	\item Computational cost measure: function evaluations, CPU time in seconds.
%	\item Budget to solve a $d$-dimensional problem: $\min(200,10d)$ starting points.
%	\item Termination criterion: Default options.
%	\item Additional options: \texttt{ms\_enable = 1}, \texttt{ms\_maxbndrange = 2}.
%\end{itemize}

\paragraph{Summary of algorithms considered.}
We compare the above ASM variants, Algorithms \ref{A-ASM} and \ref{ASM-1}, with  the Random Embeddings for Global Optimization 
(REGO) framework \cite{cartisOtemissov2022, cartis2023generalf}, that at each iteration,  draws a random subspace (of suitable or increasing dimension) by means of a Gaussian matrix, and then globally optimizes the objective $f$ in this subspace using a global solver. 
To summarize, we consider the following variants of ASM and REGO:
\begin{itemize}
    \item \textbf{A-ASM:} In Algorithm~\ref{A-ASM}, the reduced problem is solved using mKNITRO and the point $\bp^{k}$ is chosen as the best point found so far, up to the $k$th embedding: $\bp^{k} =\bx^k= \bA^{k}\by^{k}+\bp^{k-1}$.
    \item \textbf{ASM-1:} This is Algorithm~\ref{ASM-1} with $M=d_e$ and where the reduced problem is solved using mKNITRO.
    \item \textbf{A-REGO}: This is Algorithm A-REGO in \cite{cartis2023generalf} (which we re-state in the Appendix as Algorithm~\ref{REGO}, for completeness). The reduced problem is solved using mKNITRO and the anchor point $\bp^{k}$ at which the random subspace is drawn is chosen as the best point found so far, up to the $k$th embedding: $\bp^{k} = \bx^k=\bA^{k}\by^{k}+\bp^{k-1}$.
    \item \textbf{REGO-1}: This is Algorithm REGO in \cite{cartisOtemissov2022} (which we re-state in the Appendix as Algorithm~\ref{REGO-1}, for completeness), with $d = d_e$ and where the reduced problem is solved with mKNITRO.
    \item \textbf{No-embedding:} The original problem \eqref{P} is solved using mKNITRO (with no subspace reduction or low-dimensional structure exploitation).
\end{itemize}

We terminate A-ASM and A-REGO when the following criterion is satisfied or after at most $D$ embeddings (i.e., when $d=D$ in \Cref{A-ASM} or $k = D$ in \Cref{REGO}).
\begin{enumerate}
\item For A-REGO, we stop the algorithm when stagnation is observed in the objective; namely, the algorithm stops after $k_{f}$ embeddings, where $k_{f}$ is the smallest $k \geq 2$ that satisfies
\begin{align}
	|f(\bA^{k}\by^{k}+\bp^{k-1})-f(\bA^{k-1}\by^{k-1}+\bp^{k-2})| \leq \gamma = 10^{-5}.
\end{align}
%\emcomm{For ASM, I am not sure we should use it. Using this stopping criterion and not the one below means that we would stop the algorithm before finding the effective subspace (as, still criterion below is not used, it means that we are still able to draw independent gradients); I don't think this is desirable.} 
%\xlcomm{Since A-ASM will always be stopped by the criterion below, so I will modify the code (removing the above criterion in the A-ASM code) and we can just use the below criterion in ASM as we discussed in the email.}

\item For A-ASM, we stop the algorithm if no new independent sample is generated over several successive attempts; namely, the algorithm stops after $k_g$ embeddings, where $k_g$ is the smallest $k \geq 2$ such that the norms of the vectors $\ba^{k_g-4}, \dots, \ba^{k_g}$ are less than $10^{-6}$. This means that $\bA^{k_g-4} = \bA^{k_g-3}  = \dots = \bA^{k_g}$, in other words, we have no improvement in estimating the effective subspace over the last 5 embeddings. %$[\ba^{(1)},...,\ba^{(k-1)}]$ be the current space where $\ba^{(k)}$ is the orthornormal basis and $\hat{\ba}$ be a random sample, using Gram-Schmidt process, $\hat{\ba}^{(k)}$ is generate as
%\begin{equation*}
%     \hat{\ba}^{(k)} = \hat{\ba} - \text{proj}_{\ba^{(1)}}(\hat{\ba}) - \cdots - \text{proj}_{\ba^{(k-1)}}(\hat{\ba}),
% \end{equation*}
% where $\text{proj}_{\bv}(\bu) = \frac{<\bv,\bu>}{<\bv,\bv>}\bv$.
\end{enumerate}
Note that A-ASM and A-REGO also return an estimate $d_{\text{est}}$ of the effective dimension of the problem. For A-REGO, if $k_{f} < D$, we let $d_{\text{est}} = k_{f}-1$; for A-ASM, if $k_{g} < D$, we let $d_{\text{est}} = k_{g}$; otherwise, $d_{\text{est}} = D$.\label{page:est_de}

\paragraph{Performance profiles.} We rely on the performance profiles introduced in \cite{dolan2002benchmarking} to present our results. For each function $f \in \func$ and algorithm $s \in \set$ (for test set $\func$ set of algorithms $\set$), we define
\begin{align}
	N_f(s) := \# \text{ of function evaluations required by algorithm $s$ to converge. }
\end{align}
If algorithm $s$ fails to converge to an  $\epsilon$-minimizer of $f$, for $\epsilon=10^{-3}$, within the allowed computational budget, we set $N_f(s) = \infty$. We define the performance probability as 
\begin{align}
	\pi_s(\alpha) = \frac{|\{ f \in \func: N_f(s) \leq \alpha \min_{s \in \set} N_f(s) \}|}{|\func|}, \qquad \forall \alpha \geq 1,
\end{align}
where $|\cdot|$ denotes the cardinality of a set. The same procedure can be applied when taking CPU time to be performance metric.

\paragraph{Test Set.} Our main test set is generated from a set of benchmark functions for global optimization modified to increase artificially the dimension of their domain to $D = 100$ or $D = 1000$ as in \cite{Wang2016} (see Appendix~\ref{appen: table} for more information on the test set generation). The resulting functions therefore have low effective dimension by construction.

\subsection{Numerical Results}

% This section compares A-ASM, ASM-1, A-REGO and REGO-1 with the no-embedding framework on our problem set. 

\paragraph{Experiment 1: Comparison of algorithms on the test set.} 
We compare ASM-1, A-ASM, REGO-1 and A-REGO with the no-embedding framework on the whole problem set, in terms of the function evaluation counts (see \Cref{fig:perf_nfuneval}) or CPU time (see \Cref{fig:perf_t}). To ensure a fair comparison between the different algorithms, we add a fixed number of function evaluations to the total count for A-ASM and ASM-1, to account for the computation of the sampled gradients. For ASM-1, the additional number of function evaluations is $(D+1)\times M$ (i.e., the cost to estimate $M$ gradients using finite differences); for A-ASM, one additional gradient is computed per embedding, leading to a total additional number of function evaluations equal to $(D+1) \times k_g$. When we compare performances in terms of CPU time, our results include the cost of generating $\bA^{k}$ in each algorithm (A-ASM, ASM-1, A-REGO, REGO-1).

Both the evaluation count profile in \Cref{fig:perf_nfuneval} and the CPU time performance in \Cref{fig:perf_t}
show that the single subspace methods (with a priori knowledge of $d_e$ and single subproblem solve) typically win over adaptive variants, as expected. In terms of adaptive variants, A-ASM is at least as good and sometimes better than A-REGO,
showing that when $d_e$ is unknown, it is worthwhile learning the effective subspace rather than performing random subspace selections.  

For ASM-1 and REGO-1, we assume that the effective dimension $d_e$ is known for all problems and select $M = d_e$ according to  the discussion in Section \ref{sec:choose_M}. By contrast, A-ASM and A-REGO do not require  any a priori knowledge of $d_e$; but can efficiently estimate it. Table \ref{table:estimate_d} shows the estimated effective dimension ($d_{\text{est}}$) returned by the adaptive algorithms (namely, A-ASM and A-REGO) at termination\footnote{This calculation is described at page \pageref{page:est_de}.}, for three random seeds/algorithm realizations, and compares it with the true effective dimension. When the estimated effective dimension differs across seeds, the estimates for the second and third seeds are presented inside brackets.
We note that in most cases, $d_{\rm est}$ coincides with $d_e$, and in 
all cases, at least one realization of each adaptive algorithm provides an exact estimate of $d_e$ and all estimates differ from $d_e$ by at most $1$.

%We put all the algorithms in one figure for comparison. We use number of function evaluations as the performance metric, see Figure \ref{fig:perf_nfuneval}, and use CPU time as the performance metric, see Figure \ref{fig:perf_t}. For non-adaptive algorithms, assume the dimension of effective subspace $d_e$ is known for all the problem, followed the discussion in Section \ref{sec:choose_M}, we set the number of samples be $M=d_e$ in the following experiments. For adaptive algorithms, assume the dimension of effective subspace $d_e$ is unknown for all the problem, for A-ASM, we start from one gradient sample and increase the gradient matrix gradually; for A-REGO, we start from $d=1$ and increase the dimension of embedding.

\begin{figure}[h]
\subfloat{
  \centering
  \includegraphics[width=0.9\linewidth]{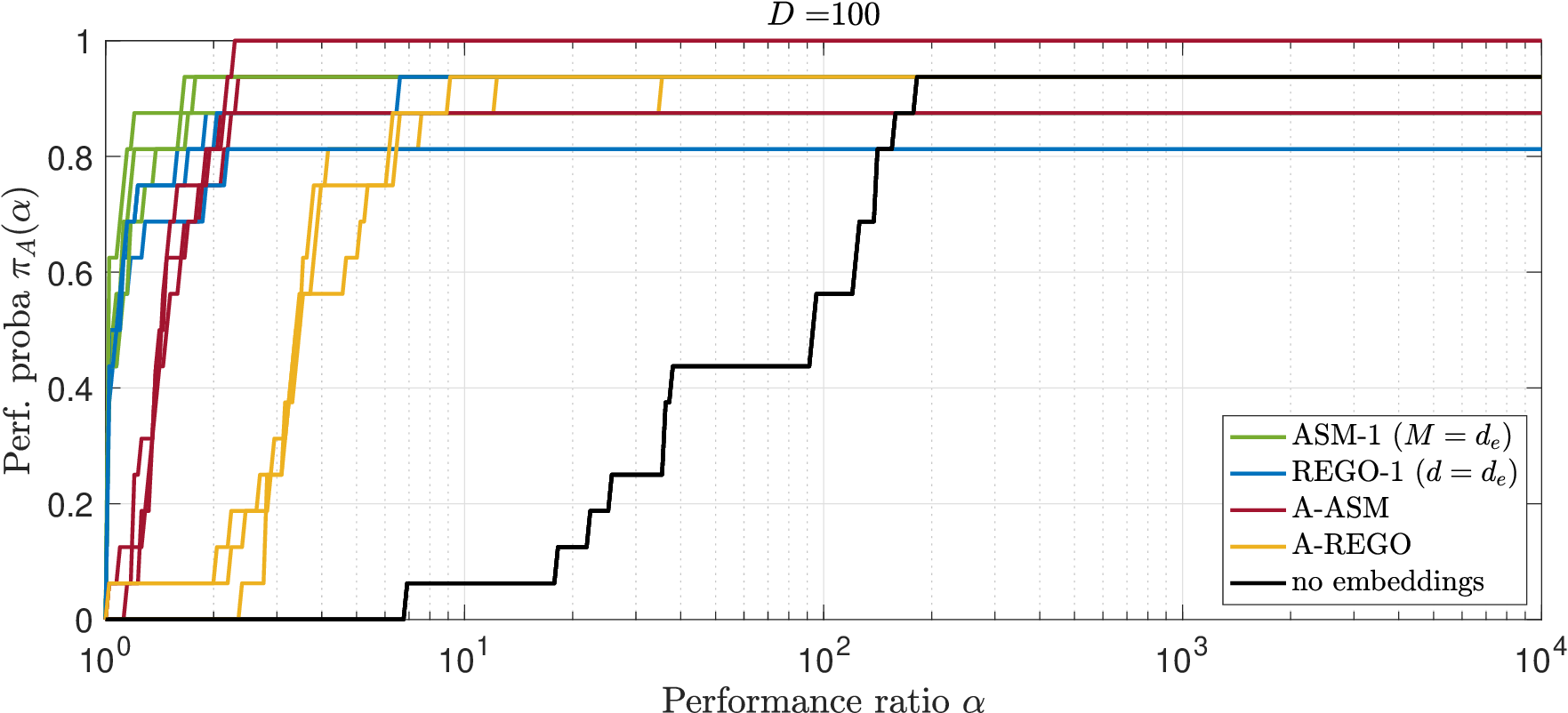} }
\newline
\subfloat{
  \centering
  \includegraphics[width=0.9\linewidth]{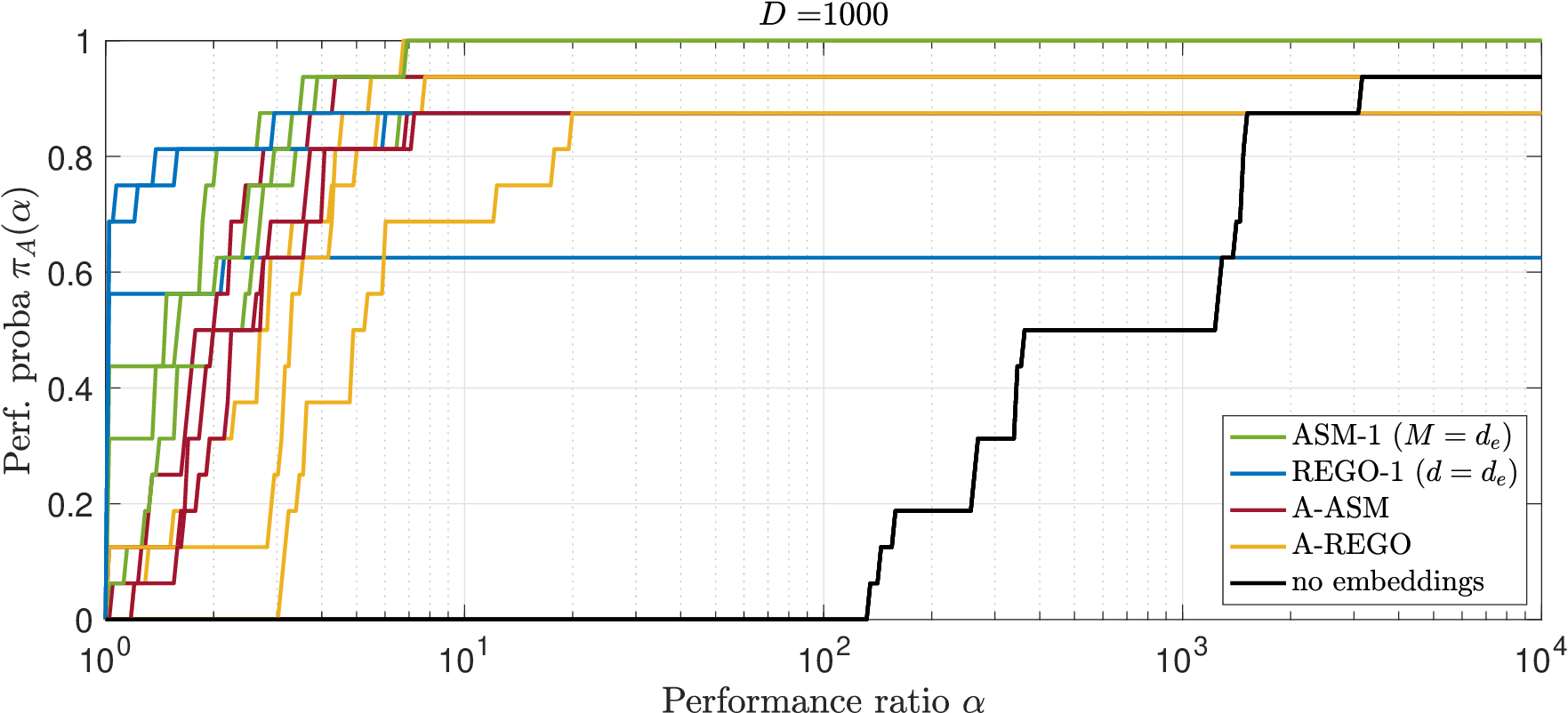} }
\caption{Comparing ASM-1, A-ASM, REGO-1, A-REGO and no-embedding (with mKNITRO) for functions with low effective dimensionality, in terms of function evaluation counts. Lines with the same colour represent three different realisations of an algorithm.}
%results for the three random seeds.}
\label{fig:perf_nfuneval}
\end{figure}

\begin{figure}[h]
\subfloat{
  \centering
  \includegraphics[width=0.9\linewidth]{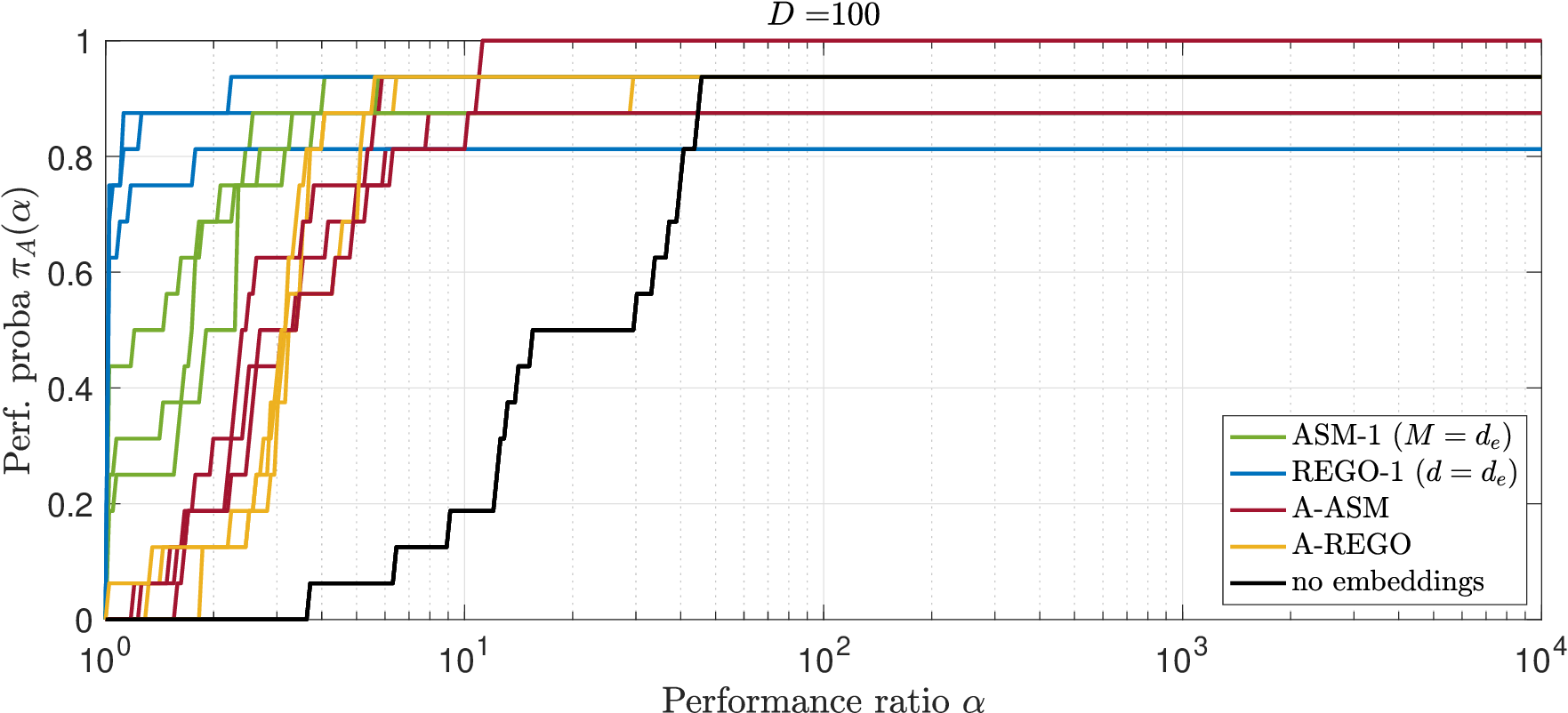} }
\newline
\subfloat{
  \centering
  \includegraphics[width=0.9\linewidth]{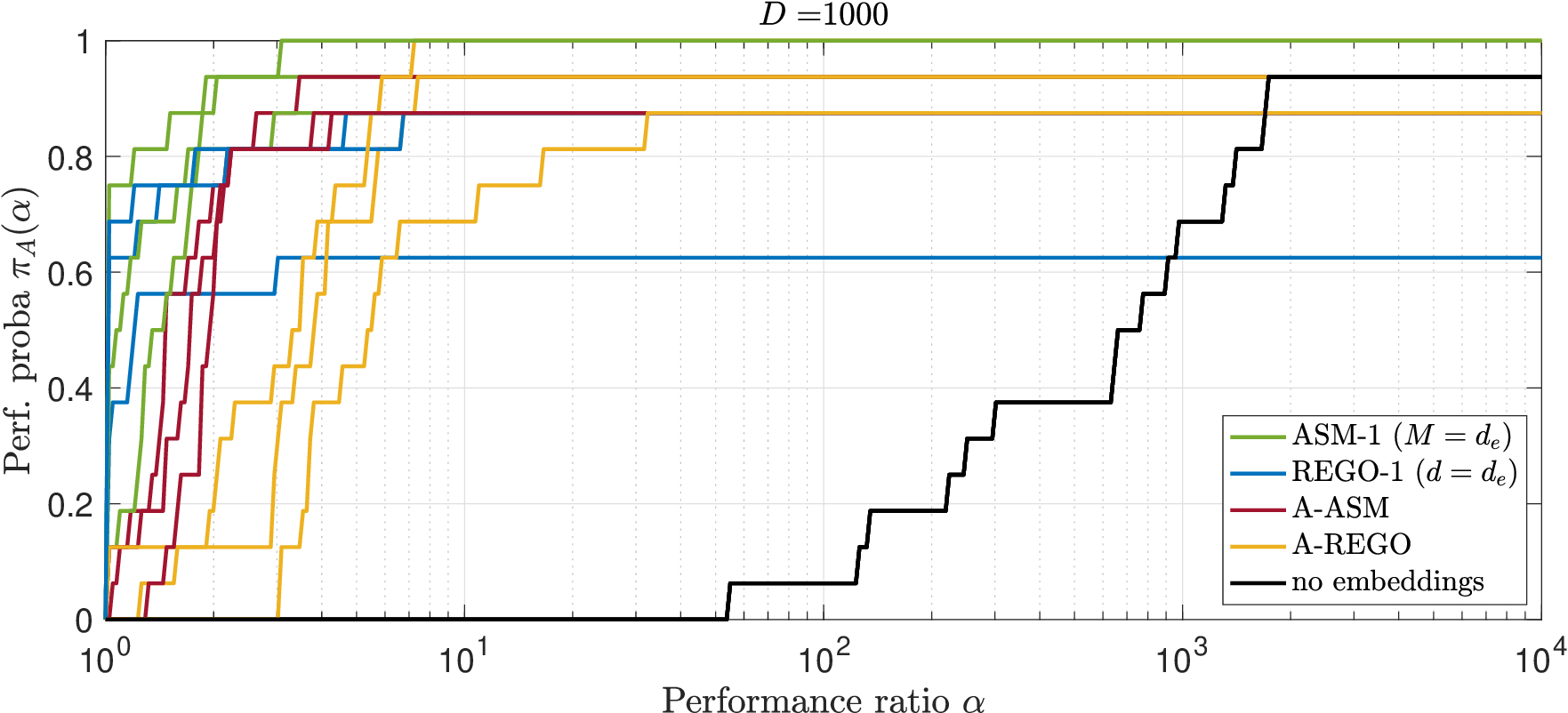}  }
\caption{Comparing ASM-1, A-ASM, REGO-1, A-REGO and no-embedding (with mKNITRO) for functions with low effective dimensionality. The performance metric  here is CPU time in seconds. Lines with the same colour represent three different realisations of an algorithm.}
%the results on the three seed options.}
\label{fig:perf_t}
\end{figure}

Table \ref{table:prob_succ_noada} lists the test functions for which at least one realization (out of three) of the solver ASM-1, REGO-1, A-ASM or A-REGO did not solve the original problem \eqref{P}.  The test problems that are not listed in this table are successfully solved by all four algorithms for $D \in \{100, 1000\}$. The last column indicates whether the no-embedding framework solved the problem (1) or not (0). Note that on these problems, ASM-1 is better than REGO-1 in terms of success rates, while A-ASM and A-REGO perform comparably.

% \begin{table}[H]
% \caption{Estimated effective dimension found by adaptive algorithms for 3 seeds.}
% \label{table:estimate_d}
% \centering
% \begin{tabular}{ |c|cc|cc|c| }
%     \hline
%       &  A-ASM & & A-REGO && $d_e$ \\ 
%        & $D=100$ &  $D=1000$ & $D=100$ & $D=1000$ &  \\
%     \hline
%     Beale & 2 & 2 & 2 & 2(2,3) &2\\ 
%     \hline
%      Branin & 2 & 2 & 2 & 2 & 2\\ 
%    \hline
%     Brent & 2 & 2 & 2 & 2 & 2\\ 
%     \hline
%      Camel & 2 & 2 & 2 & 2 &2 \\ 
%     \hline
%      Goldstein-Price & 2 & 2 & 2 & 2 &2 \\ 
%     \hline
%      Hartmann 3 & 3 & 3 & 3 & 3 &3\\ 
%     \hline
%      Hartmann 6 & 6 & 6 & 6(7,6) & 6 &6\\ 
%     \hline
%      Levy & 6 & 6 & 6(7,6) & 6(7,6)  &6\\ 
%     \hline
%      Rosenbrock & 7 & 7 & 7 & 8(7,8) & 7\\ 
%     \hline
%      Shekel 5 & 4 & 4 & 4 & 4  &4\\ 
%     \hline
%      Shekel 7 & 4 & 4 & 4(4,5) &  4 &4\\ 
%     \hline
%      Shekel 10 & 4 & 4 & 4 & 4 &4\\ 
%     \hline
%      Shubert & 2 & 2 & 2 & 2  &2\\ 
%     \hline
%      Styblinski-Tang & 8 & 8 & 9(9,8) & 8  &8\\ 
%     \hline
%      Trid & 5 & 5 & 5 & 5 &5 \\ 
%     \hline
%      Zettl & 2 & 2 & 2 & 2 & 2\\ 
%     \hline 
% \end{tabular}
% \end{table}

\begin{table}[h]
\caption{The estimated effective dimension found by our adaptive algorithms for three random seeds. Numbers in bold are the exact effective dimension.}
\label{table:estimate_d}
\centering
\fontsize{7pt}{7pt}\selectfont
\setlength\tabcolsep{4 pt}
\begin{tabular}{|r|cccccccccccccccc|}
\hline
&
\raisebox{-4\normalbaselineskip}[0pt][60pt]{\rotatebox[origin=c]{90}{Beale}} &
\raisebox{-4\normalbaselineskip}[0pt][0pt]{\rotatebox[origin=c]{90}{Branin}} &
\raisebox{-4\normalbaselineskip}[0pt][0pt]{\rotatebox[origin=c]{90}{Brent}} &
\raisebox{-4\normalbaselineskip}[0pt][0pt]{\rotatebox[origin=c]{90}{Camel}} &
\raisebox{-4\normalbaselineskip}[0pt][0pt]{\rotatebox[origin=c]{90}{Goldstein-Price}} &
\raisebox{-4\normalbaselineskip}[0pt][0pt]{\rotatebox[origin=c]{90}{Hartmann 3}} &
\raisebox{-4\normalbaselineskip}[0pt][0pt]{\rotatebox[origin=c]{90}{Hartmann 6}} &
\raisebox{-4\normalbaselineskip}[0pt][0pt]{\rotatebox[origin=c]{90}{Levy}} &
\raisebox{-4\normalbaselineskip}[0pt][0pt]{\rotatebox[origin=c]{90}{Rosenbrock}} &
\raisebox{-4\normalbaselineskip}[0pt][0pt]{\rotatebox[origin=c]{90}{Shekel 5}} &
\raisebox{-4\normalbaselineskip}[0pt][0pt]{\rotatebox[origin=c]{90}{Shekel 7}} &
\raisebox{-4\normalbaselineskip}[0pt][0pt]{\rotatebox[origin=c]{90}{Shekel 10}} &
\raisebox{-4\normalbaselineskip}[0pt][0pt]{\rotatebox[origin=c]{90}{Shubert}} &
\raisebox{-4\normalbaselineskip}[0pt][0pt]{\rotatebox[origin=c]{90}{Styblinski-Tang}} &
\raisebox{-4\normalbaselineskip}[0pt][0pt]{\rotatebox[origin=c]{90}{Trid}}  &
\raisebox{-4\normalbaselineskip}[0pt][0pt]{\rotatebox[origin=c]{90}{Zettl}} \\
\hline
A-ASM ($10^2$) & {\bf 2} & {\bf 2} & {\bf 2} & {\bf 2} & {\bf 2} & {\bf3} & {\bf 6} & {\bf 6} & {\bf 7} & {\bf 4} & {\bf 4} & {\bf 4} & {\bf 2} & {\bf 8} & {\bf 5} & {\bf 2}\\
A-ASM ($10^3$) & {\bf 2} & {\bf 2} & {\bf 2} & {\bf 2} & {\bf 2} & {\bf 3} & {\bf 6} & {\bf 6} & {\bf 7} & {\bf 4} & {\bf 4} & {\bf 4} & {\bf 2} & {\bf 8} & {\bf 5} & {\bf 2}\\ 
A-REGO ($10^2$) & {\bf 2} & {\bf 2} & {\bf 2} & {\bf 2} & {\bf 2} & {\bf 3} & {\bf 6}(7,{\bf 6}) & {\bf 6}(7,{\bf 6}) & {\bf 7} & {\bf 4} & {\bf 4}(5,{\bf 4}) & {\bf 4} & {\bf 2} & 9(9,{\bf 8}) & {\bf 5} & {\bf 2} \\ 
A-REGO ($10^3$) & {\bf 2}({\bf 2},3) & {\bf 2} & {\bf 2} & {\bf 2} & {\bf 2} & {\bf 3} & {\bf 6} & {\bf 6}(7,{\bf 6}) & 8({\bf 7},8) & {\bf 4} & {\bf 4} & {\bf 4} & {\bf 2} & {\bf 8} & {\bf 5} & {\bf 2}\\ 
\hline
\end{tabular}
\end{table}

\begin{table}[h]
\caption{Problems for which at least one solver ASM-1, REGO-1, A-ASM or A-REGO did not return an optimal solution (1 means success and 0 means failure for all three random seeds).
}
\label{table:prob_succ_noada}
\centering
\fontsize{9pt}{9pt}\selectfont
\begin{tabular}{ |c|cc|cc|cc|cc|cc| }
    \hline
       &  \multicolumn{2}{c|}{ASM-1} & \multicolumn{2}{c|}{REGO-1}  & \multicolumn{2}{c|}{A-ASM}  & \multicolumn{2}{c|}{A-REGO}  &\multicolumn{2}{c|}{no-embedding}  \\ 
       $D$ & $10^2$ &  $10^3$ & $10^2$ & $10^3$ & $10^2$ & $10^3$ & $10^2$ & $10^3$ & $10^2$ & $10^3$  \\
    \hline \hline 
    Goldstein-Price & 1 & 1 & 1 & 2/3 & 1& 1& 1& 1 & 1 & 1 \\ 
    \hline
    Levy & 2/3  & 1/3  & 0 & 0 & 2/3 & 0 & 1/3 & 1/3 & 0 & 1\\ 
    \hline
    Shekel 5 & 1 & 1 & 2/3 &  2/3 & 1 & 1 & 1 & 1 & 1 & 1\\ 
    \hline
    Shekel 7 & 1 & 1 & 1 & 2/3  & 1 & 1 & 1 & 1 & 1 & 1\\ 
    \hline
    Shekel 10 & 1 & 1 & 2/3 & 2/3 & 1 & 1 & 1 & 1 & 1 & 1\\ 
    \hline
    Styblinski-Tang & 0 & 2/3 & 2/3 &  0 & 1/3 & 1/3 & 2/3 & 2/3 & 1 & 0\\ 
    \hline
\end{tabular}
\end{table}

% \begin{table}[H]
% \caption{List of problems for which A-ASM and A-REGO did not return an optimal solution (success is averaged over three random seeds).
% }
% \label{table:prob_succ_ada}
% \centering
% \begin{tabular}{ |c|cc|cc|cc|cc| }
%     \hline
%       Function &  A-ASM & & A-REGO && no-embedding & \\ 
%        & $D=100$ &  $D=1000$ & $D=100$ & $D=1000$ & $D=100$ & $D=1000$  \\
%     \hline
%     (8) Levy & 2/3 & 0 & 1/3 & 1/3 & 0 & 1 \\ 
%     \hline
%     (14) Styblinski-Tang & 1/3 & 1/3 & 2/3 & 2/3 & 1 & 0\\ 
%     \hline
% \end{tabular}
% \end{table}

\paragraph{Experiment 2: varying the effective dimension $d_e$.} In Experiment 1, the effective dimension $d_e$ of the different problems  ranges from 2 to 10, which is significantly smaller than the search space dimension $D \in \{100,1000\}$. Here, we compare instead A-ASM, ASM-1, A-REGO, and REGO-1 for two problems (Rosenbrock and Trid) in the regime $D = 100$, and $d_e \in \{10, 20, 50\}$.  Figures \ref{fig:change_de_Rosenbrock} and  \ref{fig:change_de_Trid} compare the number of function evaluations and CPU time (averaged over three random seeds) needed by each algorithm to reach the above-mentioned stopping criteria. Note that active subspace methods result in  lower computational costs than their random embedding counterparts, and that the gap increases with the effective dimension $d_e$.

The rate of success of the algorithms to return a solution of the original problem \eqref{P}, as well as the estimated dimension, are given in Table \ref{table:change_de_short}. Note that in most cases, the algorithms are comparable, but overall, active subspace  algorithms result in larger   success rates and more accurate estimates of the effective dimension than the random embedding  methods.

\begin{figure}[h]
\subfloat{
  \centering
  \includegraphics[width=0.45\linewidth]{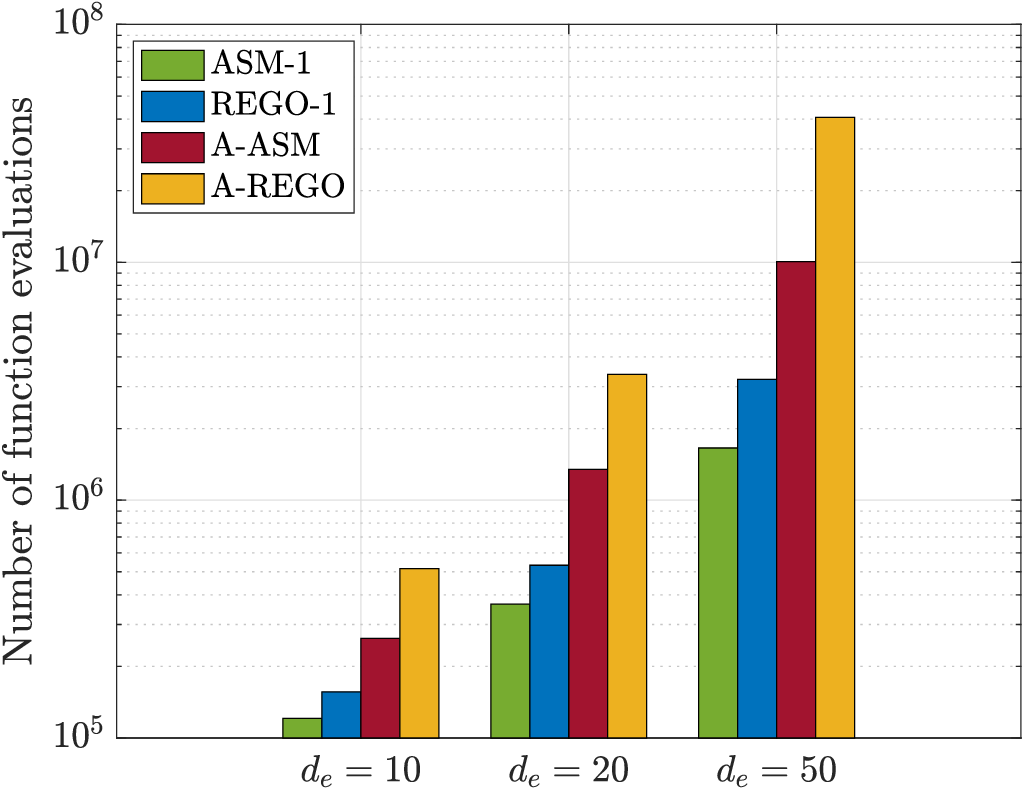}  }
\subfloat{
  \centering
  \includegraphics[width=0.45\linewidth]{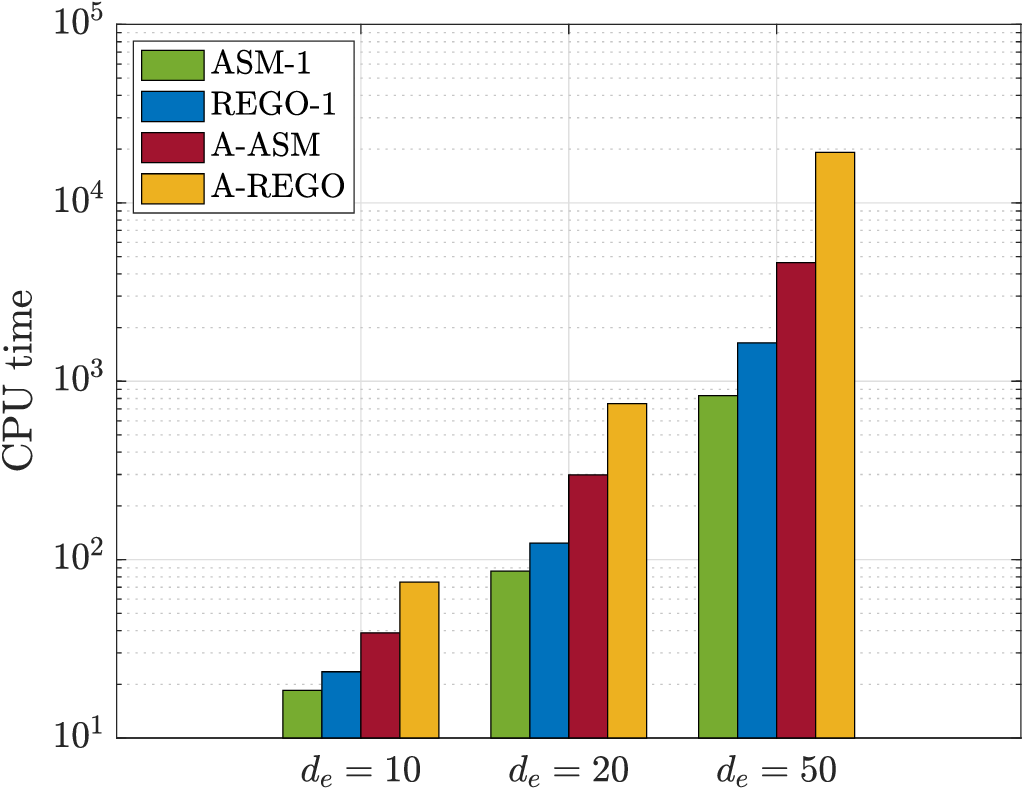} }
  \caption{Computational costs of ASM-1, A-ASM, REGO-1 and A-REGO for the Rosenbrock function with $d_e \in \{10,20,50\}$ and $D = 100$ using three random seeds.} 
\label{fig:change_de_Rosenbrock}
\end{figure}

\begin{figure}[h]
\subfloat{
\centering
  \includegraphics[width=0.45\linewidth]{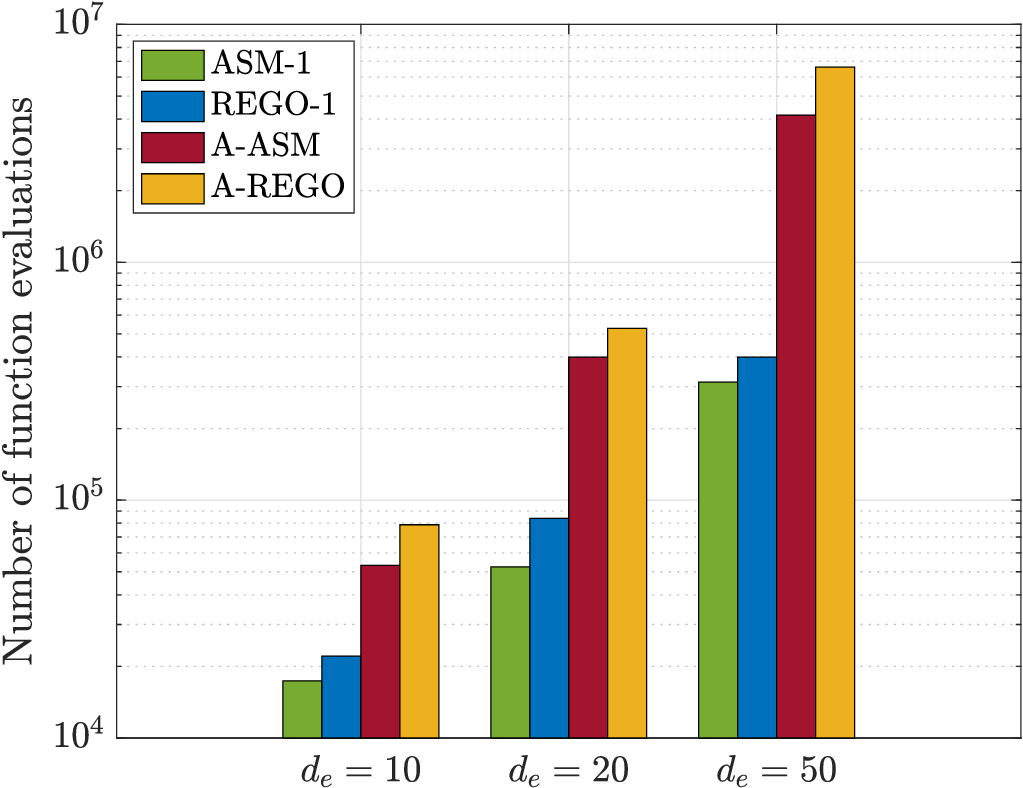} }
\subfloat{
  \centering
  \includegraphics[width=0.45\linewidth]{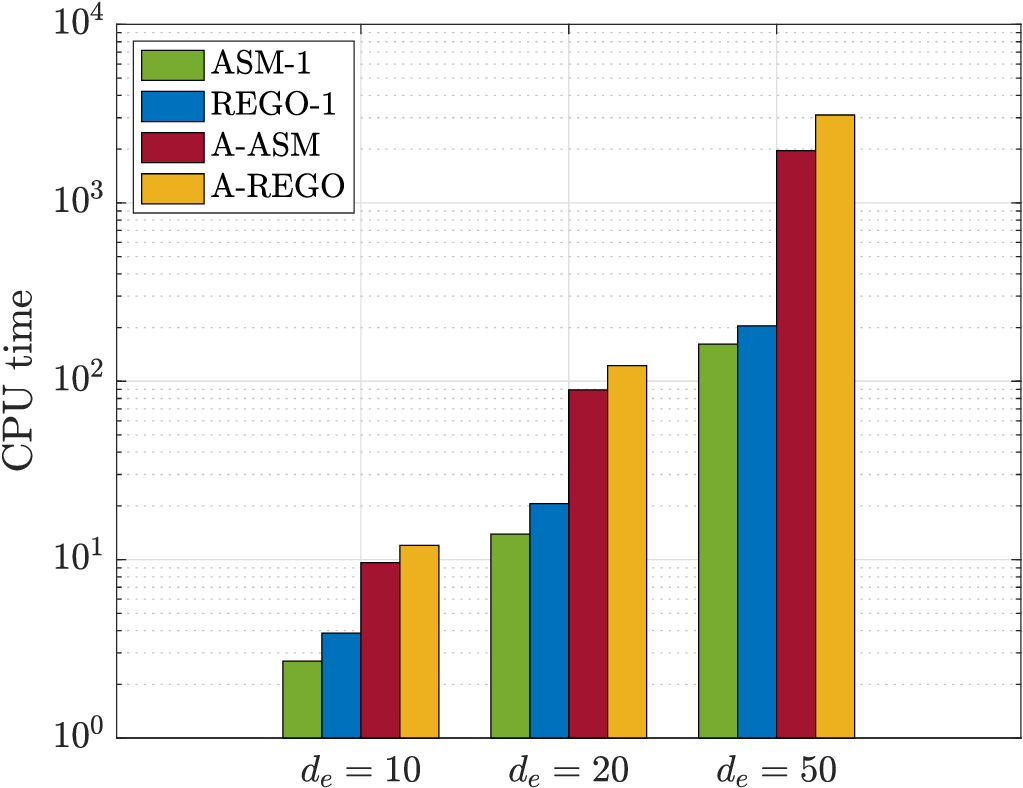}  }
\caption{Computational costs of ASM-1, A-ASM, REGO-1 and A-REGO for the Trid function with $d_e \in \{10,20,50\}$ and $D = 100$ using three random seeds.} 
\label{fig:change_de_Trid}
\end{figure}

\begin{table}[h]
\caption{Estimated effective dimension and rate of success of ASM-1, A-ASM, REGO-1 and A-REGO on the Rosenbrock and Trid functions with $d_e \in \{10,20,50\}$, averaged over three random seeds.}
\label{table:change_de_short}
\centering
\fontsize{9pt}{9pt}\selectfont
\setlength\tabcolsep{3.5 pt}
\begin{tabular}{ |c||c|c|c|c||c|c|c|c| }
    \hline
    & \multicolumn{4}{c||}{$d_{\text{est}}$} & \multicolumn{4}{c|}{Success rate over 3 seeds} \\
    & ASM-1 & REGO-1 & A-ASM & A-REGO & ASM-1 & REGO-1 & A-ASM & A-REGO\\ 
    \hline
    Ros. 10  & 10 & 10 & 10 & 10 & 1 & 1 & 1 & 1 \\ 
    \hline
    Ros. 20  & 20 & 20 & 20 & 20 & 1 & 1 & 1 & 1 \\ 
    \hline
    Ros. 50 & 50 & 50 & 50 & 50(52,50)  & 1 & 1/3 & 1 & 1 \\
    \hline
    Trid 10 & 10 & 10 & 10 & 10  & 1 & 1 & 1 & 1 \\ 
    \hline
    Trid 20 & 20 & 20 & 20 & 20  & 1 & 1 & 1 & 1 \\ 
    \hline
    Trid 50 & 50 & 50 & 50 & 52(53,50) & 1 & 1/3 & 1 & 1 \\ 
    \hline
\end{tabular}
\end{table}

\paragraph{Experiment 3: sampling complexity analysis.}
\label{sec:choose_M}
The aim of this experiment is to estimate $M$\footnote{For reference, we remind the reader here of our sampling results in Theorem  \ref{thm:estimate_on_M} and Remark \ref{remark:M}. Here, we find that numerically, in many cases, $M=d_e$ is sufficient, apart from some difficult examples.},
the number of sample gradients needed to successfully find the effective dimension (namely, the exact dimension of the active subspace) for several benchmark functions (results averaged over five random seeds). 
%Figure \ref{fig:prob_succ_M} shows the number of samples $M$ needed to successfully find the effective dimension (namely, the exact dimension of the active subspace) for several benchmark functions (results averaged over five random seeds). 
In this experiment, we compute the matrix $\hat \bC$ using \Cref{alg:est_C}, based on $M$ samples drawn independently at random according to a density $\rho$ chosen here as the standard Gaussian distribution in $\R^D$, and measure the effective dimension as the rank\footnote{We rely here on MATLAB built-in function \texttt{rank} with default tolerance.} of $\hat \bC$.  Figure \ref{fig:prob_succ_M} showcases $M$ for several benchmark functions (results averaged over five random seeds). 

Note that, for Rosenbrock (and most of the functions of our test set), the effective subspace dimension is found as soon as $M = d_e$, which means that the first $d_e$ sampled gradients $\nabla f(\bx_S^1), \dots, \nabla f(\bx_S^{d_e})$ are linearly independent. Thus, for all these objectives, sampling $d_e$ points is typically sufficient to find the effective dimension, as the sampled gradients are generally linearly independent. A first exception is  Hartmann3 with $D = 1000$, where the same conclusion holds for 4 out of the 5 seeds tried; in the remaining case, the gradients  $\nabla f(\bx_S^1), \dots, \nabla f(\bx_S^{d_e})$ are deemed linearly dependent, but the correct dimension is found as soon as $M=d_e+1$.
A more noticeable exception is the Easom function, due to its very sharp peak at the optimum; see \Cref{fig:alpha_easom} (left). For this function, most of the sampled gradients are located outside of the peak, hence are very close to zero. To better illustrate the impact of the geometry of the function on sampling complexity, we define a rescaled version of the Easom function, referred to as $\alpha$Easom, and defined as 
\[ f(x_1,x_2) = -\cos(\psi(x_1)) \cos(\psi(x_2)) \exp(-(\psi(x_1)-\pi)^2-(\psi(x_2)-\pi)^2),\]
with $\psi(x) = \alpha(x-\pi)+\pi$. The change of variables $\psi$ allows widening the peak of the function, as illustrated in \Cref{fig:alpha_easom} (right) for $\alpha = 0.1$, so that the difficulty to sample informative gradients is diminished. For $\alpha = 1$, $\alpha$Easom becomes the usual Easom function. 
Figure \ref{fig:prob_succ_M} illustrates the probability to find the correct effective dimension using \Cref{alg:est_C} for $\alpha$Easom with $\alpha \in \{0.1, 0.5, 1\}$. Note that, as $\alpha$ decreases, the success probability increases for a given $M$; when $\alpha=0.1$, the required number of samples to estimate $d_e$ is close to $M = d_e$, as we were observing for the other functions of our test set. A more detailed view of the impact of  scaling on sampling complexity is provided in \Cref{fig:Easom_prob_succ}, where the lowest value of $M$ for which $\hat \bC$ has rank $d_e$ is displayed for 5 different seeds; this figure confirms the trend observed in \Cref{fig:prob_succ_M} for a grid of $\alpha$ values. 

\begin{figure}
\subfloat{
  \centering
  \includegraphics[width=0.9\linewidth]{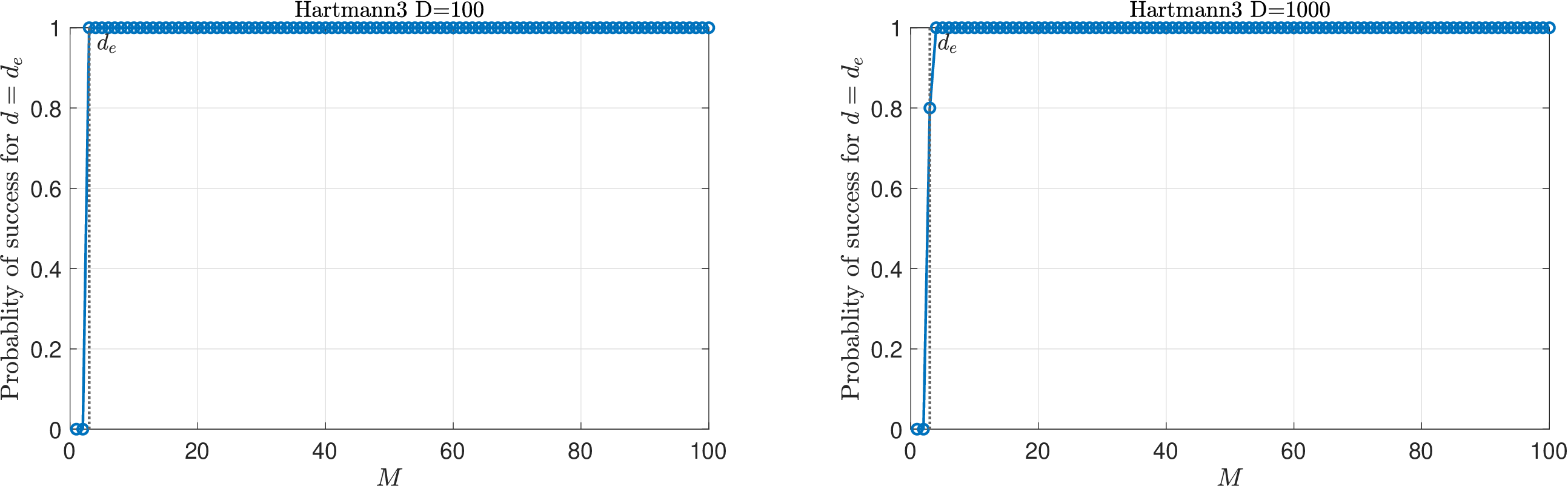}}
\newline
\subfloat{
  \centering
  \includegraphics[width=0.9\linewidth]{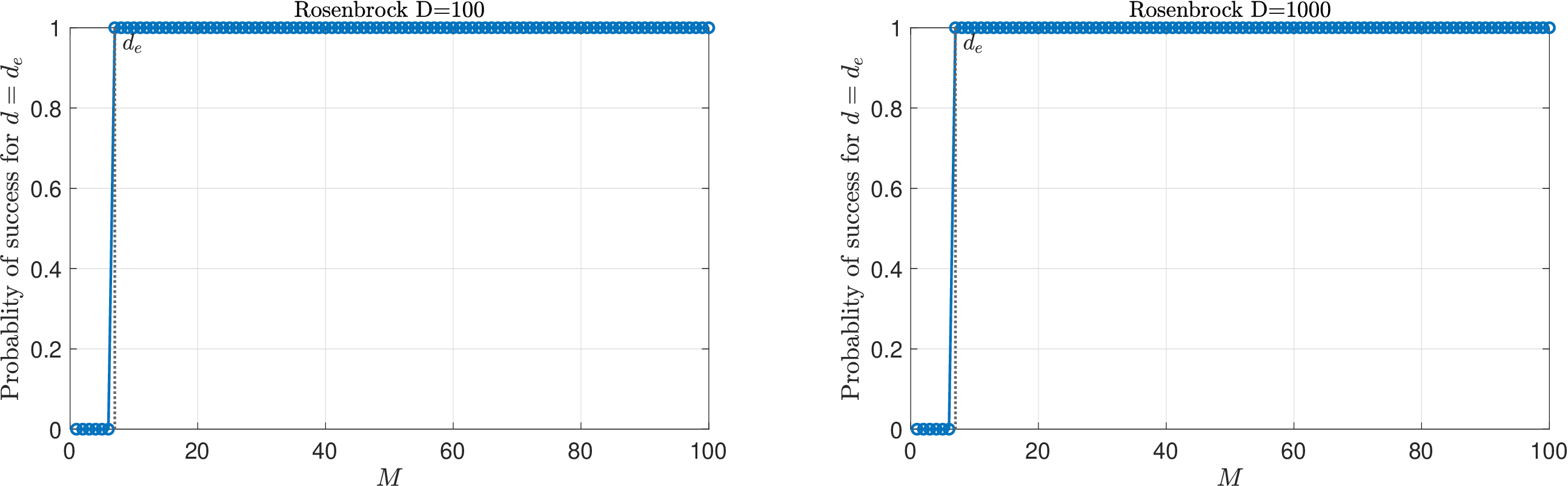}}
\newline
\subfloat{
\centering
  \includegraphics[width=0.9\linewidth]{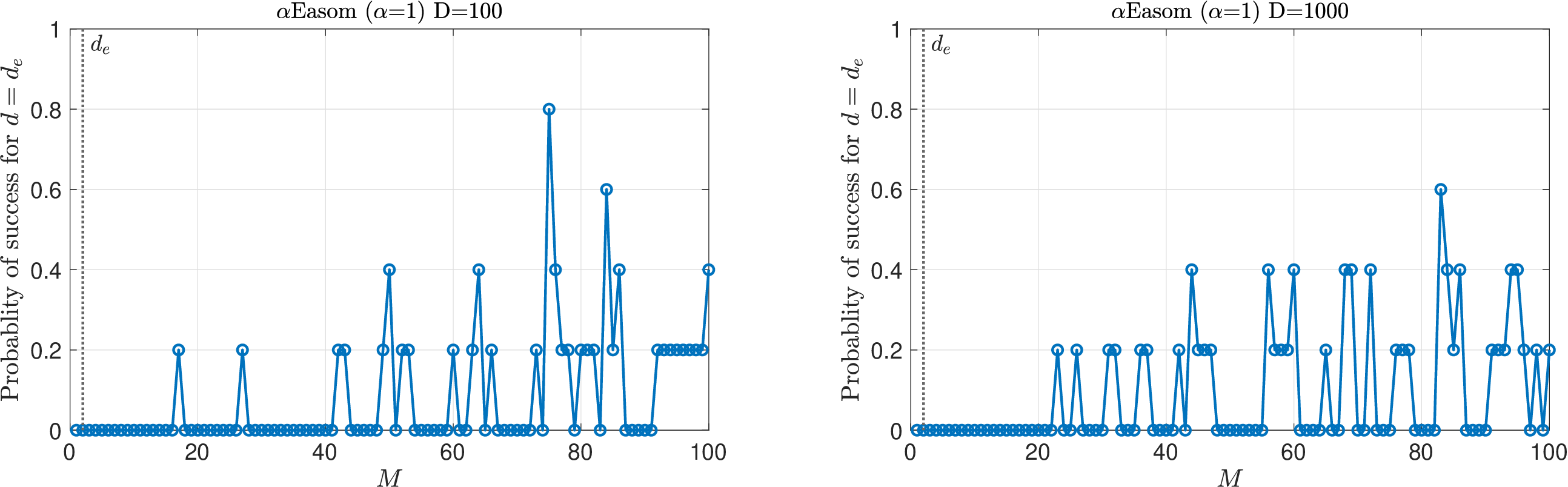}}
\newline
\subfloat{
\centering
  \includegraphics[width=0.9\linewidth]{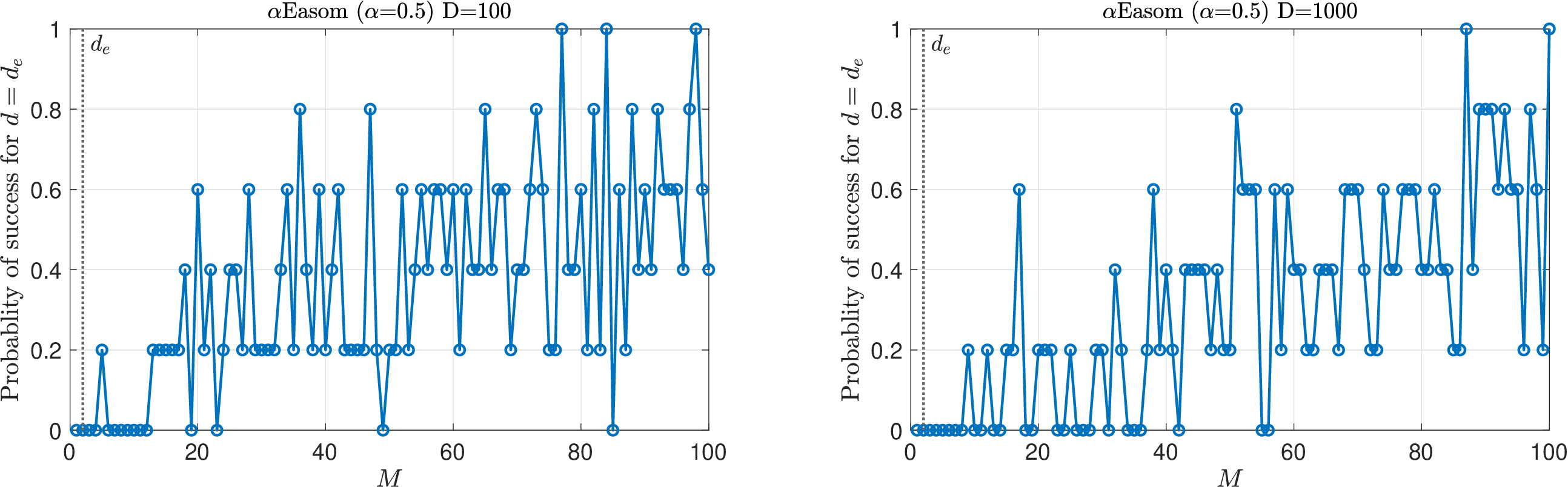}}
  \newline
\subfloat{
\centering
  \includegraphics[width=0.9\linewidth]{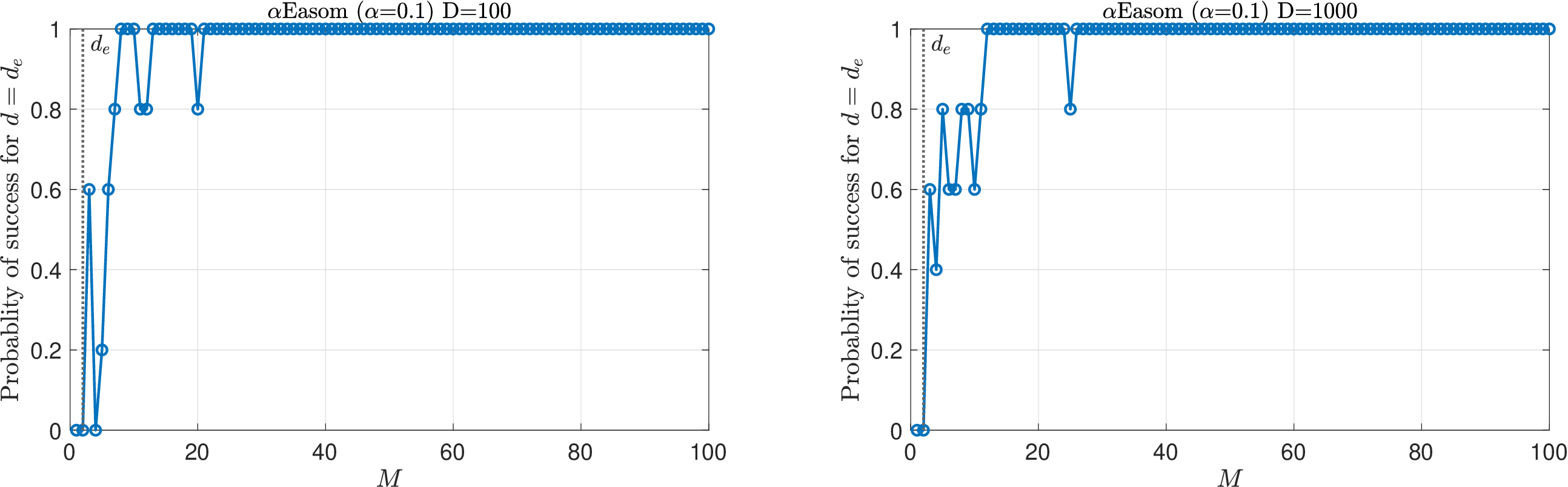}}
\caption{Probability of finding the exact dimension of the effective subspace ($d_e$) using \Cref{alg:est_C} with $M$ samples over $5$ seeds.}
\label{fig:prob_succ_M}
\end{figure}

\begin{figure}[h]
\subfloat{
    \centering
    \includegraphics[width=0.45\linewidth]{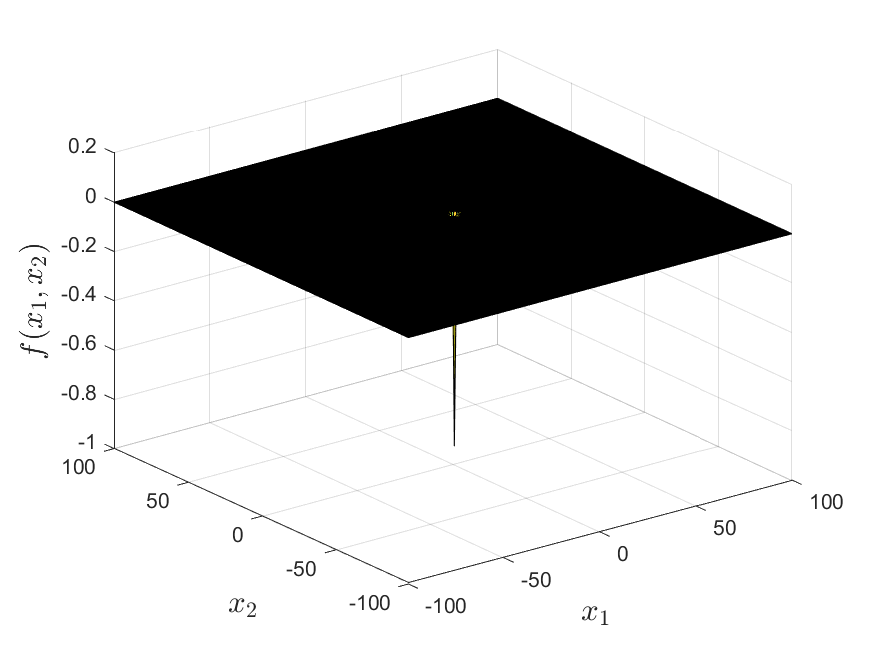}}
\subfloat{
    \centering
    \includegraphics[width=0.45\linewidth]{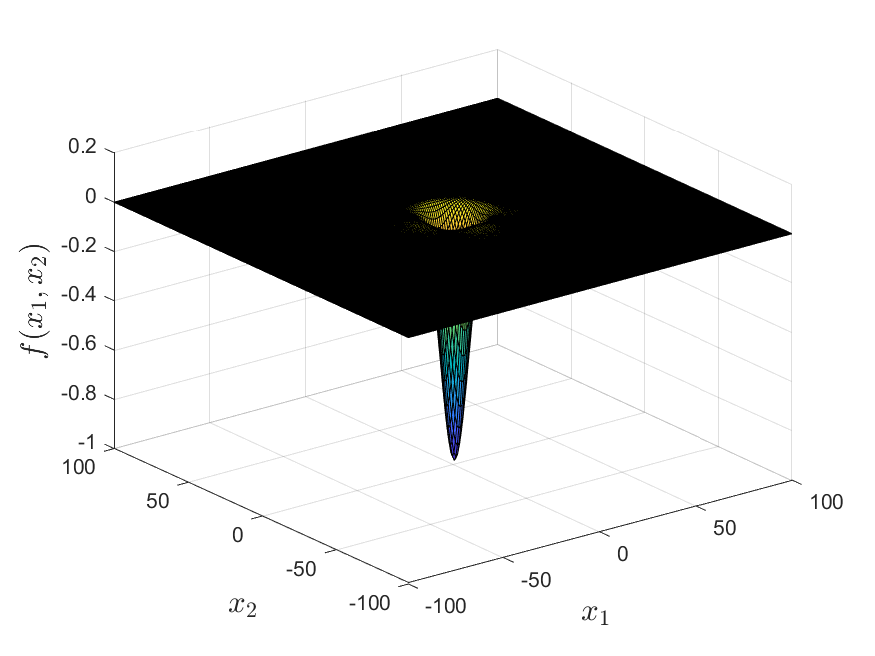}}
\caption{Original Easom function (left), and of $\alpha$Easom for $\alpha = 0.1$ (right).} 
\label{fig:alpha_easom}
\end{figure}

\begin{figure}[h!]
    \centering
    \includegraphics[width=\linewidth]
    {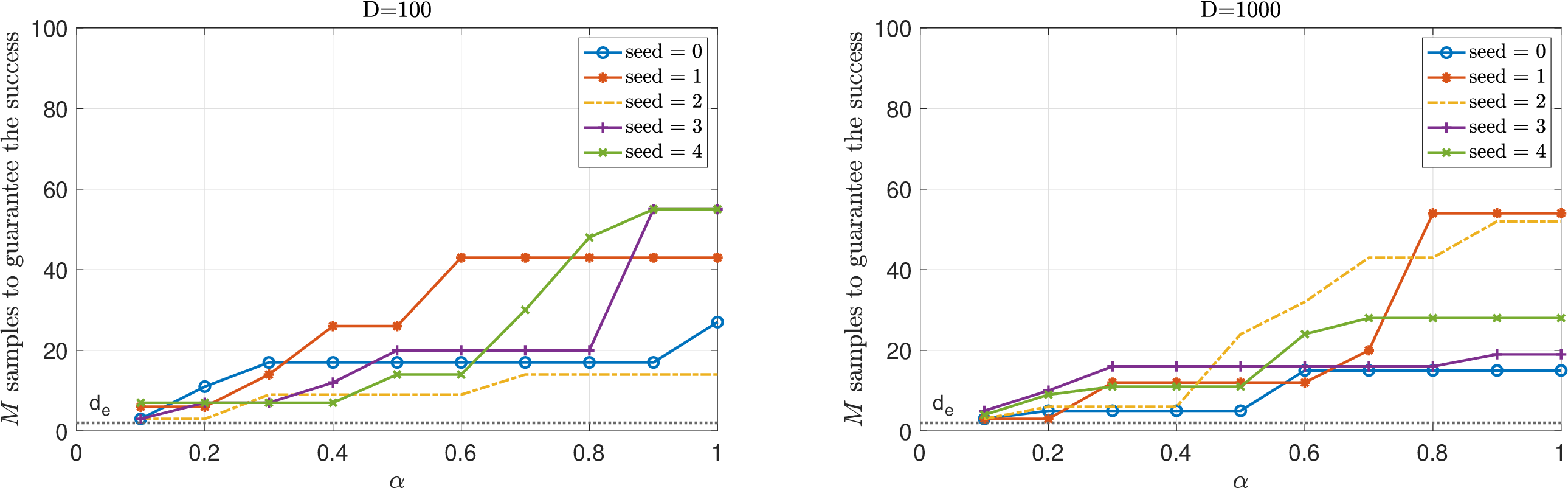}
    \caption{Minimum number of samples for finding the effective dimension ($d_e = 2$) using \Cref{alg:est_C} for 5 random seeds and for $\alpha \in \{0.1, 0.2, \dots, 1\}$.}
\label{fig:Easom_prob_succ}
\end{figure}

{\bf Concluding comments to the numerical experiments.} Overall, we found that knowing the effective dimension a priori benefits algorithm performance, both when random and active embeddings are used. When $d_e$ is unknown, the adaptive variant that learns the subspace of variation is at least as good as its random embedding counterpart. In the adaptive active subspace algorithms, we have only grown the subspace/sampling by one (gradient) vector on each (major) iteration; this is similar to the adaptive random embeddings framework where also, the random subspace increases slowly. Alternative approaches could be developed that sample more than one gradient at each iteration, reducing the number of subspace subproblems that need solving. The precise trade-off between sample complexity and computational cost of the subproblems remains to be decided.

%\emcomm{Maybe comment somewhere on the fact that in general we may sample more than one point at the time? Tradeof between the computation to solve the problem and to estimate the embedding. Here: used this regime for fair comparison with A-REGO...} \xlcomm{That is a good point. Do you mean that we should comment that instead of increasing one dimension at each embedding in A-ASM we could adjust the number of dimensions as we want, like we could increase $d$ by any number we want ideally in each embedding in A-REGO.}

%\section{Conclusions}
%\input{conclusion.tex}

\bibliography{refs.bib}
\bibliographystyle{plain}

\appendix
\newpage
\section{Proofs of some results in Section \ref{sec: algo}}
%The proofs below follow a similar argument to corresponding random embedding results in \cite{cartis2023bound}, but applied to the different context here, of active subspace learning.

\paragraph{Proof of Lemma \ref{lemma: lim_of_prob_is_1}.}
We define an auxiliary random variable
\[ J^K :=  \mathds{1} \left(\bigcup_{k=1}^K \left\{  \{R^k = 1\} \cap \{ S^k = 1 \} \right\} \right) =  1- \prod_{k=1}^{K} (1-R^k S^k).\]
Then,
\begin{align*}
	\prob\Big[ \bigcup_{k=1}^K \left\{ \{R^k = 1\} \cap \{ S^k = 1 \} \right\} \Big] &= \mathbb{E}[J^K]  = 1 - \mathbb{E}\Big[\prod_{k=1}^{K} (1-R^k S^k)\Big] \\
	& \stackrel{(*)}{=} 1 - \mathbb{E}\Big[\mathbb{E}\Big[ \prod_{k=1}^{K} (1-R^k S^k) \Big| \mathcal{F}^{K-1/2} \Big]\Big] \\
	& \stackrel{(\circ)}{=} 1 - \mathbb{E}\Big[ \prod_{k=1}^{K-1} (1-R^k S^k) \cdot \mathbb{E}\big[ 1 - R^K S^K | \mathcal{F}^{K-1/2} \big]\Big] \\
	& \geq 1 - \mathbb{E}\Big[(1-\gamma R^K)\cdot \prod_{k=1}^{K-1} (1-R^k S^k)  \Big] \\
	& \stackrel{(*)}{=} 1 - \mathbb{E}\Big[\mathbb{E}\Big[(1-\gamma R^K)\cdot \prod_{k=1}^{K-1} (1-R^k S^k)   \Big| \mathcal{F}^{K-1}\Big]\Big] \\
	& \stackrel{(\circ)}{=} 1 - \mathbb{E}\Big[ \prod_{k=1}^{K-1} (1-R^k S^k) \cdot \mathbb{E}\big[ 1 - \gamma R^K | \mathcal{F}^{K-1} \big]\Big] \\
	& \geq 1 - (1-\tau \gamma) \cdot \mathbb{E}\Big[ \prod_{k=1}^{K-1} (1-R^kS^k)\Big],
	\end{align*}
where 
\begin{itemize}
\item[-]
$(*)$ follow from the tower property of conditional expectation (see (4.1.5) in \cite{Durrett2019}), 
\item[-]
$(\circ)$ is due to the fact that $R^1, \dots, R^{K-1}$ and $S^1,\dots,S^{K-1}$ are $\mathcal{F}^{K-1/2}$-  \,and $\mathcal{F}^{K-1}$-measurable (see Theorem 4.1.14 in \cite{Durrett2019}), 
\item[-]
 the inequalities follow from \Cref{corr:success_solver} and \Cref{corr: lowerbdRPK}, respectively. 
\end{itemize}
We repeatedly expand the expectation of the product for $K-1$, $\ldots$, $M_0$, in exactly the same manner as above, to obtain 
\begin{align*}
    \prob\Big[ \bigcup_{k=1}^K \left\{ \{R^k = 1\} \cap \{ S^k = 1 \} \right\} \Big] & \geq 1 - (1-\tau \gamma)^{K-M_0+1} \cdot \mathbb{E}\Big[ \prod_{k=1}^{M_0 - 1} (1-R^kS^k)\Big] \\
    & \geq 1 - (1-\tau\gamma)^{K-M_0+1}, 
\end{align*}
where the last inequality follows from $\mathbb{E}\Big[ \prod_{k=1}^{M_0 - 1} (1-R^kS^k)\Big] \leq 1$ which is true since $(1-R^kS^k) \leq 1$ for $k\geq 1$.

\paragraph{Proof of Lemma \ref{lemma: if WcapG then_x in G_epsilon}.}
By \Cref{def:RPsuccessful}, if \eqref{RP_k} is successful, then there exists $\mvec{y}^k_{int} \in \mathbb{R}^d$ such that
	\begin{equation} \label{ineq: asym_conv_ineq1}
	f(\mtx{A}^k\mvec{y}^k_{int} + \mvec{p}^{k-1}) = f^*.
	\end{equation}
	Since $ f^k_{min}$ is the global minimum of \eqref{RP_k}, we have
	\begin{equation} \label{ineq: asym_conv_ineq3}
	 f^k_{min} \leq f(\mtx{A}^k\mvec{y}^k_{int} + \mvec{p}^{k-1}).
	\end{equation}
	Then, for $\mvec{x}^k$, \eqref{l_f0} gives the first inequality below,
	$$ f(\mvec{x}^k) \leq f^k_{min} + \epsilon \leq f(\mtx{A}^k\mvec{y}^k_{int} + \mvec{p}^{k-1}) + \epsilon = f^* + \epsilon, $$
	where the second inequality is due to \eqref{ineq: asym_conv_ineq3} and the equality follows from \eqref{ineq: asym_conv_ineq1}. This shows that $\mvec{x}^k \in G_\epsilon$. 

\section{Test problem generation}
\label{appen: table}

Table~\ref{table:test_set} contains the name, domain and global minimum of our benchmark functions, selected from \cite{adorio2005mvf, gavana2016global, bingham2013virtual}.

\begin{table}[h]
\caption{Our benchmark function set}
\centering
\begin{tabular}{ |c|c|c| }
    \hline
      Function &  Domain  & Global minimum \\ 
    \hline
     Beale~\cite{adorio2005mvf} & $\by \in [-4.5, 4.5]^{2}$ & $h(\by^{*})=0$ \\ 
    \hline
     Branin~\cite{adorio2005mvf} & \makecell{$\by_{1} \in [-5, 10]$ \\ $\by_{2} \in [0, 15]$} & $h(\by^{*})=0.397887$ \\ 
   \hline
    Brent~\cite{gavana2016global} & $\by \in [-10, 10]^{2}$ & $h(\by^{*})=0$ \\ 
    \hline
    Camel~\cite{adorio2005mvf} & \makecell{$\by_{1} \in [-3, 3]$ \\ $\by_{2} \in [-2, 2]$} & $h(\by^{*})=-1.0316$ \\ 
    \hline
    Goldstein-Price~\cite{adorio2005mvf} & $\by \in [-2, 2]^{2}$ & $h(\by^{*})=3$ \\ 
    \hline
    Hartmann 3~\cite{adorio2005mvf} & $\by \in [0, 1]^{3}$ & $h(\by^{*})=-3.86278$ \\ 
    \hline
    Hartmann 6~\cite{adorio2005mvf} & $\by \in [0, 1]^{6}$ & $h(\by^{*})=-3.32237$ \\ 
    \hline
    Levy~\cite{bingham2013virtual} & $\by \in [-10, 10]^{6}$ & $h(\by^{*})=0$ \\ 
    \hline
    Rosenbrock~\cite{bingham2013virtual}  & $\by \in [-5, 10]^{7}$ & $h(\by^{*})=0$ \\ 
    \hline
    Shekel 5~\cite{bingham2013virtual}  & $\by \in [0, 10]^{4}$ & $h(\by^{*})=-10.1532$ \\ 
    \hline
    Shekel 7~\cite{bingham2013virtual}  & $\by \in [0, 10]^{4}$ & $h(\by^{*})=-10.4029$ \\ 
    \hline
    Shekel 10~\cite{bingham2013virtual}  & $\by \in [0, 10]^{4}$ & $h(\by^{*})=-10.5364$ \\ 
    \hline
    Shubert~\cite{bingham2013virtual}  & $\by \in [-10, 10]^{2}$ & $h(\by^{*})=-186.7309$ \\ 
    \hline
    Styblinski-Tang~\cite{bingham2013virtual}  & $\by \in [-5, 5]^{8}$ & $h(\by^{*})=-313.329$ \\ 
    \hline
    Trid~\cite{bingham2013virtual}  & $\by \in [-25, 25]^{5}$ & $h(\by^{*})=-30$ \\ 
    \hline
    Zettl~\cite{adorio2005mvf} & $\by \in [-5, 5]^{2}$ & $h(\by^{*})=-0.00379$\\ 
    \hline 
    %$\alpha$Easom~\cite{adorio2005mvf} & $\by \in [-100, 100]^{2}$ & $h(\by^{*})=-1$ \\ 
    %\hline 
\end{tabular}
\label{table:test_set}
\end{table}

Each function $h$ in Table~\ref{table:test_set} is turned into a function $f$ over $\R^D$, for $D \in \{100,1000\}$. We proceed in three steps. First, we apply a suitably chosen linear change of variables to each function $h$ of \Cref{table:test_set}, to turn it into a function $\bar h$ whose domain is $[-1,1]^{d_{e}}$. Then, we add $D-d_{e}$ fake dimensions in the search space, with zero coefficient:
\begin{align*}
	h(\bx) = \bar{h}(\bar{\bx})+0\cdot x_{d_{e}+1}+0\cdot x_{d_{e}+2}+ \cdots+0\cdot x_{D}.
\end{align*}
Finally, in order to obtain a non-trivial constant subspace, we rotate the function $h(\bx)$ by applying a random orthogonal matrix $\bQ$ to $\bx$. The $D$-dimensional function we used in our test is then given by 
\begin{align*}
	f(\bx) = h(\bQ\bx).	
\end{align*}
Note that, by construction, the first $d_{e}$ rows of $\bQ$ form a basis of the effective subspace $\bT$ of $f$, and the last $D-d_{e}$ rows  of $\bQ$ form a basis of the constant subspace $\bT^{\bot}$ of $f$.\\

\section{Random embedding algorithms}
\label{appen: alg}

We recall here the A-REGO (see \cite{cartis2023generalf}) and REGO-1 algorithms (Algorithm REGO in \cite{cartisOtemissov2022}).

\begin{algorithm}[H]
\begin{algorithmic}[1]
\State Give an initial dimension $d^{1}=1$ and $\bp^{0} \in \bR^{D}$
\For{$k \geq 1$ until termination criterion is satisfied}
\State Generate a $D \times d^{k}$ Gaussian matrix $\bA^{k}$
\State Calculate $\by^{k}$ by applying a global optimization solver to
\begin{equation}
%\tag{RP$_{r}^{(k)}$}
\begin{aligned}
	& \min_{\by \in \bR^{d^{k}}} f(\bA^{k} \by + \bp^{k-1}) 
\end{aligned}
\label{RP_r}
\end{equation}
\State Construct $\bx^{k}=\bA^{k}\by^{k}+\bp^{k-1}$
\State Choose $\bp^{k} \in \bR^{D}$ (deterministically or stochastically) and $d^{k+1}=d^{k}+1$
\EndFor
\end{algorithmic}
\caption{A-REGO applied to~(\ref{P}) for $f$ with low effective dimensionality}
\label{REGO}
\end{algorithm}

\begin{algorithm}[H]
\begin{algorithmic}[1]
\State Give the dimension $d$ and $\bp \in \bR^{D}$ \State Generate a $D \times d$ Gaussian matrix $\bA$
\State Calculate $\by$ by applying a global optimization solver to
\begin{equation}
%\tag{RP$_{r}$}
\begin{aligned}
	& \min_{\by \in \bR^{d}} f(\bA \by + \bp)  
\end{aligned}
\label{RP_r}
\end{equation}
\State Let $\bx=\bA \by + \bp$.
\end{algorithmic}
\caption{REGO-1 applied to~(\ref{P}) for $f$ with low effective dimensionality}
\label{REGO-1}
\end{algorithm}

\end{document}